\newtheorem{thm}{Theorem}[section]
\newtheorem{lemma}[thm]{Lemma}
\newtheorem{prop}[thm]{Proposition}
\newtheorem{cor}[thm]{Corollary}
\newtheorem{question}[thm]{Question}
\newtheorem{fact}[thm]{Fact}
\theoremstyle{definition}
\newtheorem{df}[thm]{Definition}
\newtheorem{rmk}[thm]{Remark}
\newtheorem{rmks}[thm]{Remarks}
\newtheorem{ex}[thm]{Example}
\newtheorem{exs}[thm]{Examples}
\newcommand{\Z}{\mathbb{Z}}
\newcommand{\curly}[1]{\mathcal{#1}}
\newcommand{\A}{\curly{A}}
\newcommand{\cO}{\curly{O}}
\renewcommand{\o}{\Omega}
\def \r { {\mathbb R} }
\def \<{\langle}
\def \>{\rangle}
\def \n {\mathbb N}
\def \z {{\mathbb Z}}
\def \*Z {{{^*}\Z}}
\def \g{\mathfrak{g}}
\def \((  {(\!(}
\def \)) {)\!)}
\def \dh {\text d_H }
\def \st {\operatorname {st}}
\renewcommand{\dh}{\hat{d}}
\newcommand{\gh}{\hat{G}}
\newcommand{\hg}{\hat{\mathfrak{g}}}
\numberwithin{equation}{section}
\def \A{\mathcal{A}}
\def \u{\mathcal{U}}
\def \i{\operatorname{in}}
\def \sta{^\circ}
\def \tvs{\operatorname{tvs}}
\def \Diff{\operatorname{Diff}}
\def \VP{\operatorname{VP}}
\begin{document}

\title{Nonstandard hulls of locally uniform groups}
\author{Isaac Goldbring}
\thanks{This work was partially supported by NSF grant DMS-1007144.}
\address{University of California, Los Angeles, Department of Mathematics\\ 520 Portola Plaza, Box 951555\\ Los Angeles, CA 90095-1555, USA}
\email{isaac@math.ucla.edu}
\urladdr{http://www.math.ucla.edu/~isaac}

\maketitle

\begin{abstract}
We present a nonstandard hull construction for locally uniform groups in a spirit similar to Luxembourg's construction of the nonstandard hull of a uniform space.  Our nonstandard hull is a local group rather than a global group.  We investigate how this construction varies as one changes the family of pseudometrics used to construct the hull.  We use the nonstandard hull construction to give a nonstandard characterization of Enflo's notion of groups that are uniformly free from small subgroups.  We prove that our nonstandard hull is locally isomorphic to Pestov's nonstandard hull for Banach-Lie groups.  We also give some examples of infinite-dimensional Lie groups that are locally uniform.
\end{abstract}

\section{Introduction}

In \cite{Lux}, Luxembourg constructs the \emph{nonstandard hull} of a uniform space $(X,\u)$.  Roughly speaking, the nonstandard hull of $(X,\u)$ is the quotient of the set of ``finite'' elements of $X^*$ by the equivalence relation of being infinitely close, where $x,y\in X^*$ are infinitely close if the pair $(x,y)$ belongs to the (nonstandard extension of) every entourage in $\u$.  

A natural example of a uniform space is the case of a topological group equipped with either its left uniformity or right uniformity.  A natural question to ask is whether the nonstandard hull of a topological group is naturally a topological group.  In this paper, we show that if the topological group $G$ is \emph{locally uniform}, that is, that the group multiplication is uniformly continuous near the identity, then there is a sutiable modification of Luxembourg's construction that yields a nonstandard hull that is a \emph{local group}.  (A local group is like a topological group except elements can only be multiplied if they are sufficiently close to the identity, and a suitable version of the associative law is required; see \cite{Gold} for a precise definition.)  If one is unhappy about the fact that the nonstandard hull of a topological group is no longer a group, we show that there is a topological group naturally associated to the local group nonstandard hull via the \emph{Mal'cev hull} construction.  Unfortunately, this process does not lead to a canonical choice of \emph{global nonstandard hull}.

A defect of Luxembourg's construction is that it is not a \emph{uniform invariant} in the sense that the construction of the nonstandard hull of a uniform space depends on the choice of a generating set of pseudometrics for the uniformity, and changing the set of pseudometrics drastically changes the appearance of the nonstandard hull.  In this paper, we always use a set of \emph{left-invariant} pseudometrics when constructing the nonstandard hull of a group and discuss the effect of changing the set of generating pseudometrics.  In general, using different sets of pseudometrics does not lead to locally isomorphic nonstandard hulls.  However, if $G$ is metrizable, then any two nonstandard hulls constructed by using left-invariant metrics will be locally isomorphic.

The notion of a locally uniform group was first introduced by Enflo in \cite{enflo} as a way to approach Hilbert's fifth problem in infinite dimensions.  Indeed, locally compact groups are locally uniform and Enflo's aim was to generalize some of the theory of locally compact groups to the setting of locally uniform groups.  In this vein, Enflo introduced a uniform version of the \emph{no small subgroups property} (the property that was integral in settling Hilbert's fifth problem), aptly named \emph{uniformly free from small subgroups}.  The prime examples of groups that are uniformly free from small subgroups are Banach-Lie groups and diffeomorphism groups of compact manifolds.  Groups that are uniformly free from small subgroups are locally uniform and metrizable.  We show that a group is uniformly free from small subgroups if and only if its metric nonstandard hull is free from small subgroups.  (This is a common phenomenon in nonstandard analysis, namely that a standard object has the uniform version of a property if and only if some associated nonstandard object has the non-uniform version of the property.)

Since Banach-Lie groups are locally uniform, our nonstandard hull procedure applies to them.  In \cite{pestov}, Pestov, using a different construction and some nontrivial Lie theory, developed a nonstandard hull construction for Banach-Lie groups.  We will show that, for Banach-Lie groups, our nonstandard hull is locally isomorphic to Pestov's nonstandard hull; in fact, for a suitable choice in our construction, our global nonstandard hull is the universal covering group of Pestov's nonstandard hull.  Our nonstandard hull has the advantage of being a purely topological construction, involving no Lie theoretic facts in its construction.  

Pestov used his nonstandard hull construction to prove a useful local criterion for when a Banach-Lie algebra is enlargeable in the sense that it is the Lie algebra of a Banach-Lie group.  It is our hope that our general nonstandard hull construction will be of use in settling some of the open problems in infinite-dimensional Lie theory presented in \cite{neeb}.  Of course, in order to achieve this goal, it will be useful to understand which infinite-dimensional Lie groups are locally uniform; we devote some time here to discussing this issue.

We assume that the reader is familiar with nonstandard analysis; otherwise the reader can consult \cite{D} or \cite{He}.  We will also assume that the reader is familiar with some basic facts from Lie theory, although we might occasionally recall some of the relevant facts.

We would like to thank Lou van den Dries and Terence Tao for some useful discussions regarding this paper.

\section{Locally Uniform Groups}

If $X$ is a set and $\u$ is a uniformity on $X$, then we set $$\mu(\u):=\bigcap \{U^* \ | \ U\in \u\},$$ and we write $x\approx_\u y$ to indicate $(x,y)\in \mu(\u)$.  If $\u_1$ and $\u_2$ are uniformities on $X$, then it is easy to see that $\u_1=\u_2$ if and only if $\mu(\u_1)=\mu(\u_2)$.  Indeed suppose that $U\in \u_1\setminus \u_2$.  Then for every $V\in \u_2$, $V\setminus U\not=\emptyset$.  By saturation, $\bigcap\{V^* \ V\in \u_2\}\setminus U^*\not=\emptyset$, contradicting that $\mu(\u_1)=\mu(\u_2)$.  It is also easy to see that if $(X,\u)$ and $(Y,\mathcal{V})$ are uniform spaces, then a map $f:X\to Y$ is uniformly continuous if and only if, for all $x,y\in X^*$, $x\approx_\u y\Rightarrow f(x)\approx_{\mathcal{V}} f(y)$.

Throughout this paper, $G$ denotes a (hausdorff) topological group with nonstandard extension $G^*$; we denote the monad of the identity simply by $\mu$.  We let $\u_l$ denote the \emph{left uniformity of $G$}, that is, the uniformity on $G$ which has sets of the form $\{(x,y) \ : \ x^{-1}y\in U\}$ as a basis, where $U$ ranges over the open neighborhoods of the identity.  For $x,y\in G^*$, we write $x\approx_l y$ if and only if $(x,y)\in \mu(\u_l)$; equivalently, $x\approx_l y$ if and only if $x^{-1}y\in \mu$.  Similarly, we have the \emph{right uniformity} $\u_r$ of $G$, which has as a basis sets of the form $\{(x,y) \  \ xy^{-1}\in U\}$, where $U$ ranges over the open neighborhoods of the identity.  For $x,y\in G^*$, we write $x\approx_r y$ if and only if $(x,y)\in \mu(\u_r)$, or, equivalently, $xy^{-1}\in \mu$.  Clearly $\approx_l$ and $\approx_r$ are equivalence relations on $G^*$.

For $A\subseteq G$ and $n\in \n^{>0}$, we write $A^n:=\{x_1\ldots x_n \ : \ \text{ each }x_i\in A\}$.   

\begin{lemma}[\cite{enflo}, Proposition 1.1.2]\label{locuniform}
Suppose that $U$ is a symmetric open neighborhood of the identity and $\u$ is a uniformity on $G$ compatible with the topology on $G$ such that the map $(x,y)\mapsto xy:U^2\times U^2\to U^4$ is $\u$-uniformly continuous.  Then $\u|U=\u_l|U=\u_r|U$ and $x\mapsto x^{-1}:U\to U$ is $\u$-uniformly continuous.
\end{lemma}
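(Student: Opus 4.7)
My approach is to translate everything to the nonstandard criteria already set up at the start of this section: equality of uniformities amounts to equality of their monads, and $\u$-uniform continuity amounts to preservation of the monadic relation. Concretely, it suffices to show that for $x, y \in U^*$ the three relations $x \approx_\u y$, $x \approx_l y$, and $x \approx_r y$ coincide; $\u$-uniform continuity of inversion will then fall out immediately.

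The crux is the equivalence, for $x, y \in U^*$,
\[
x \approx_\u y \quad\Longleftrightarrow\quad x^{-1}y \in \mu.
\]
For $(\Rightarrow)$, assume $x \approx_\u y$ in $U^*$. Since $U$ is symmetric, $x^{-1} \in U^* \subseteq (U^2)^*$, and the pairs $(x^{-1}, x), (x^{-1}, y) \in (U^2 \times U^2)^*$ are $\u$-equivalent coordinatewise. The hypothesized $\u$-uniform continuity of multiplication on $U^2 \times U^2$ then yields $e = x^{-1}x \approx_\u x^{-1}y$. But $e$ is standard and $\u$ is compatible with the topology on $G$, so the $\u$-monad of $e$ coincides with the topological monad $\mu$; hence $x^{-1}y \in \mu$. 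For $(\Leftarrow)$, assume $x^{-1}y \in \mu$, so by the same compatibility $e \approx_\u x^{-1}y$. The pairs $(x, e), (x, x^{-1}y) \in (U^2 \times U^2)^*$ are $\u$-equivalent coordinatewise, and uniform continuity of multiplication gives $x = x \cdot e \approx_\u x \cdot (x^{-1}y) = y$.

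A symmetric argument, applying uniform continuity of multiplication to $(x, y^{-1}), (y, y^{-1})$ for one direction and to $(e, y), (xy^{-1}, y)$ for the other, yields the analogous equivalence $x \approx_\u y \Leftrightarrow xy^{-1} \in \mu$ on $U^*$. Together these establish $\u|U = \u_l|U = \u_r|U$. Uniform continuity of inversion on $U$ then drops out: given $x, y \in U^*$ with $x \approx_\u y$, the right-uniformity characterization gives $xy^{-1} \in \mu$; since $U$ is symmetric, $x^{-1}, y^{-1} \in U^*$, and $(x^{-1})^{-1}(y^{-1}) = xy^{-1} \in \mu$ says $x^{-1} \approx_l y^{-1}$, hence $x^{-1} \approx_\u y^{-1}$ by what we have just proved.

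The main obstacle is conceptual rather than technical: one has to be careful never to invoke uniform continuity of inversion while proving the three uniformities agree on $U$, since that is itself a conclusion of the lemma. The organizing insight is that $\u$-uniform continuity of multiplication on $U^2 \times U^2$, together with the trivial identity $U^{-1} = U$, is already enough to slide a factor from $\mu$ in and out of a product like $x^{-1}y$; no free variable ever needs to be inverted, so the hypothesis is never used beyond its stated domain.
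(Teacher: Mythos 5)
Your proof is correct and takes essentially the same route as the paper's: both reduce $\u|U=\u_l|U=\u_r|U$ to the coincidence of the three monadic relations on $U^*$, proved by one application of $\u$-uniform continuity of multiplication in each direction. The only cosmetic difference is the final inversion step, where the paper applies uniform continuity of multiplication once more (multiplying $x^{-1}y\approx_\u e$ on the right by $y^{-1}$), whereas you instead combine the already-established equivalences $\approx_\u=\approx_r=\approx_l$ on $U^*$ with the identity $(x^{-1})^{-1}y^{-1}=xy^{-1}$; both are valid.
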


\begin{proof}
Suppose that $x,y\in U^*$ are such that $x\approx_\u y$.  Since $x^{-1}\approx_\u x^{-1}$, we get that $x^{-1}x\approx_\u x^{-1}y$, that is, $x^{-1}y\approx_\u e$.  Since $\u$ is compatible with the topology of $G$, we have $x^{-1}y\in \mu$, that is, $x\approx_l y$.  Conversely, suppose that $x\approx_l y$, that is, $x^{-1}y\in \mu$.  Since $\u$ is compatible with the topology of $G$, we have $x^{-1}y\approx_\u e$, so $x\approx_\u y$.  Consequently, $\mu(\u)\cap (U^*\times U^*)=\mu(\u_l)\cap (U^*\cap U^*)$, whence $\u|U=\u_l|U$.  One argues in the same way to obtain the same result for $\u_r$.

Now suppose that $x,y\in U^*$ and $x\approx_\u y$.  Then $x\approx_l y$, so $x^{-1}y\in \mu$, so $x^{-1}y\approx_\u e$, so $x^{-1}yy^{-1}\approx_\u y^{-1}$, that is, $x^{-1}\approx_\u y^{-1}$.     
\end{proof}

Following Enflo, we say that $G$ is \emph{locally uniform} if there is a uniformity $\u$ on $G$ compatible with the topology and a symmetric open neighborhood $U$ of the identity such that the map $(x,y)\mapsto xy:U^2\times U^2\to U^4$ is uniformly continuous.  If we want to specify $U$, we say that $G$ is \emph{$U$-locally uniform}.  If we can take $U=G$, we say that $G$ is \emph{uniform}.

Let $P_l$ denote the set of left-invariant pseudometrics on $G$ which are continuous (as maps from $G\times G$ into $\r$) and let $P_r$ denote the set of right-invariant pseudo-metrics on $G$ which are continuous.  Then $P_l$ generates $\u_l$, that is, the sets of the form $V_{p,r}:=\{(x,y)\in G\times G \ : \ p(x,y)<r\}$ form a subbase for $\u_l$ as $p$ ranges over $P_l$ and $r$ ranges over $\r^{>0}$.  Similarly, $P_r$ generates $\u_r$.  Observe that if $p_1,\ldots,p_n\in P_l$, then $\max(p_1,\ldots,p_n)\in P_l$, whence the sets $V_{p,r}$ form a base for $\u_l$; a similar observation holds for $P_r$ and $\u_r$.
\begin{lemma}
The following are equivalent:
\begin{enumerate}
\item $G$ is $U$-locally uniform;
\item $\u_l|U=\u_r|U$;
\item $\mu(\u_l)\cap (U^*\times U^*)=\mu(\u_r)\cap (U^*\times U^*)$;
\item for all $x,y\in U^*$:  $x^{-1}y\in \mu \Leftrightarrow xy^{-1}\in \mu$;
\item $\mu$ is ``normal'' in $U^*$:  for all $x\in U^*$ and $y\in \mu$, we have $xyx^{-1}\in \mu$;
\item for all $x,y\in U^*$: $$p(x,y)\approx 0 \text{ for all }p\in P_l \Leftrightarrow q(x,y)\approx 0 \text{ for all }q\in P_r.$$
\end{enumerate}
\end{lemma}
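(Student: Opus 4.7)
My plan groups the six conditions into a soft block $\{(2),(3),(4),(6)\}$ and a hard block $\{(1),(5)\}$. The within-block equivalences are routine. $(2)\Leftrightarrow(3)$ follows from the general principle, recalled in Section 2, that a uniformity is determined by its monad, applied to $\u_l|U$ and $\u_r|U$. $(3)\Leftrightarrow(4)$ is the unpacking of $\approx_l$ and $\approx_r$ in terms of $\mu$. And $(2)\Leftrightarrow(6)$ follows because $P_l$ and $P_r$ form pseudometric bases for $\u_l$ and $\u_r$. The bridge $(1)\Rightarrow(2)$ is Lemma~\ref{locuniform}, and $(5)\Rightarrow(4)$ is direct: setting $\eta:=x^{-1}y\in\mu$, one has $xy^{-1}=x\eta^{-1}x^{-1}\in\mu$ by (5).

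For the substantive equivalence $(1)\Leftrightarrow(5)$: for $(1)\Rightarrow(5)$, the key observation is that the proof of Lemma~\ref{locuniform} goes through verbatim with $U$ replaced by $U^2$, since the only ingredients used are the symmetry of the smaller set and $\u$-uniform continuity of multiplication on $U^2\times U^2$ (which is the defining hypothesis of $U$-local uniformity). This yields the strengthening $\u|U^2 = \u_l|U^2 = \u_r|U^2$. Given $x\in U^*\subseteq (U^2)^*$ and $\eta\in\mu$, the elements $x$ and $x\eta$ both lie in $(U^2)^*$ and satisfy $x\approx_l x\eta$ trivially; the extended equality upgrades this to $x\approx_r x\eta$, whence $x\eta^{-1}x^{-1}\in\mu$ and $x\eta x^{-1}\in\mu$. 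For $(5)\Rightarrow(1)$, take $\u:=\u_l$ (which is compatible with the topology) and verify $\u_l$-uniform continuity of multiplication on $U^2\times U^2$. Given $x_i,y_i\in(U^2)^*$ with $\varepsilon_i:=x_i^{-1}y_i\in\mu$, the identity $(x_1x_2)^{-1}(y_1y_2)=(x_2^{-1}\varepsilon_1 x_2)\varepsilon_2$ reduces the problem to showing $x_2^{-1}\varepsilon_1 x_2\in\mu$; factoring $x_2=u_1u_2$ with $u_j\in U^*$ (using that $(U^2)^*=U^*\cdot U^*$ by transfer) and applying (5) twice — first to conjugate $\varepsilon_1$ by $u_1^{-1}$, then to conjugate the result by $u_2^{-1}$ — completes the check.

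The remaining task, and the main obstacle I expect, is closing the cycle by showing one of $(2),(3),(4),(6)$ implies $(1)$ or $(5)$. The naive attempt at $(4)\Rightarrow(5)$ would apply $(4)$ to the pair $(x,x\eta)$ to deduce $x\eta^{-1}x^{-1}\in\mu$, but this fails because $x\eta$ need not lie in $U^*$ when $x\in U^*$ is near $\partial U$: indeed, the monad of such an $x$ in the left uniformity can escape $U^*$. My plan is to bootstrap by showing $(4)\Rightarrow(1)$ directly: taking $\u:=\u_l$, the computation from the previous paragraph shows that $\u_l$-uniform continuity on $U^2\times U^2$ reduces to $\mu$ being normal in $(U^2)^*$, and I would aim to extract this normality from $(4)$ alone by invoking the factorization $(U^2)^*=U^*\cdot U^*$ together with $\mu\subseteq U^*$, applying $(4)$ only to pairs that do lie in $U^*\times U^*$ and then splicing the results. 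Once $(4)\Rightarrow(1)$ is in hand, $(1)\Rightarrow(5)$ (proved above) closes the cycle; an alternative route would be a permanence/saturation argument extending the biconditional in $(4)$ from $U^*\times U^*$ to $(U^2)^*\times (U^2)^*$, after which the pair $(x,x\eta)$ falls within the enlarged biconditional.
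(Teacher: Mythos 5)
Your decomposition into a ``soft block'' $\{(2),(3),(4),(6)\}$ and a ``hard block'' $\{(1),(5)\}$ is essentially the same structure as the paper's proof, which delegates $(1)\Rightarrow(2)$ to Lemma~\ref{locuniform}, declares $(2)$--$(6)$ ``clearly equivalent,'' and then proves $(3)\Rightarrow(1)$ by fixing $x,x_1,y,y_1\in U^*$ with $x\approx_l x_1$, $y\approx_l y_1$ and running the chain $xy\approx_l xy_1$, then $xy_1\approx_r x_1y_1$ by $(3)$ applied to $x,x_1$, then $xy_1\approx_l x_1y_1$ ``by $(3)$ again.'' You have correctly put your finger on the point where this chain is terse: the last step is, upon unwinding, the claim $y_1^{-1}(x^{-1}x_1)y_1\in\mu$, which is $(5)$ applied to $y_1^{-1}\in U^*$ and $x^{-1}x_1\in\mu$ (equivalently, $(3)$ applied to a pair living in $(U^2)^*$, which $(3)$ does not literally cover). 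You have also correctly noticed that the paper's argument verifies $\u_l$-uniform continuity of multiplication only on $U\times U$, whereas the definition of $U$-local uniformity asks for $U^2\times U^2$, and that the naive $(4)\Rightarrow(5)$ stumbles on the boundary issue ($x\eta$ may escape $U^*$). Your worked-out pieces --- Lemma~\ref{locuniform} run on $U^2$ for $(1)\Rightarrow(5)$, and the conjugation identity $(x_1x_2)^{-1}(y_1y_2)=(x_2^{-1}\varepsilon_1 x_2)\varepsilon_2$ together with the factorization $(U^2)^*=U^*\cdot U^*$ for $(5)\Rightarrow(1)$ --- are correct and are the ``fully spelled out'' version of what the paper compresses.

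However, your proposed route for closing the cycle does not resolve the difficulty you yourself flagged. The $(4)\Rightarrow(1)$ bootstrap reduces normality of $\mu$ in $(U^2)^*$ to normality of $\mu$ in $U^*$, i.e.\ to $(5)$, and $(4)\Rightarrow(5)$ is precisely the gapped step: factoring $x=u_1u_2$ with $u_i\in U^*$ does not help, because the pair $(u_2,u_2\eta)$ can still leave $U^*\times U^*$, and there is no canonical way to force the factors into $\i(U^*)$. Likewise, an overspill argument gives you $(4)\Leftrightarrow(2)$ as standard statements, but passing from $\u_l|U=\u_r|U$ to normality of $\mu$ on $U^*$ hits the same boundary obstruction. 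The resolution consistent with the paper's usage (which treats local uniformity as a germ-like notion --- see the lemma showing $\hat{U}_S$ and $\hat{V}_S$ are locally isomorphic) is to allow the witnessing neighborhood to shrink: given $(4)$ on $U$, choose a symmetric open $U'$ with $U'^2\subseteq U$; then for $x\in U'^*$ and $\eta\in\mu\subseteq U'^*$ one has $x\eta\in (U'^2)^*\subseteq U^*$, so $(4)$ applies to $(x,x\eta)\in U^*\times U^*$ and yields $(5)$ on $U'^*$, after which your bootstrapping gives $U'$-local uniformity. Alternatively, replacing $U^*$ by $\i(U^*)$ throughout $(3)$--$(5)$ makes $(4)\Rightarrow(5)$ immediate, since $x\in\i(U^*)$ and $\eta\in\mu$ force $x\eta\in\i(U^*)$. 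Either way the equivalence is genuine; it just does not hold with a single fixed $U$ on both sides as the statement nominally suggests, and the paper's ``clearly equivalent'' elides exactly the point you raised.
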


\begin{proof}
The direction (1)$\Rightarrow (2)$ was part of Lemma \ref{locuniform} and clearly $(2)$-$(6)$ are equivalent.  We prove (3)$\Rightarrow (1)$.  Fix $x,x_1,y,y_1\in U^*$ such that $x\approx_l x_1$ and $y\approx_l y_1$.  It suffices to show that $xy\approx_l x_1y_1$.  It is clear that $xy\approx_lxy_1$ and, by (3), we have $xy_1\approx_r x_1y_1$.  By (3) again, we have $xy_1\approx_l x_1y_1$, whence $xy\approx_l x_1y_1$.
\end{proof}

\begin{exs}

\

\begin{enumerate}
\item Any locally compact group is locally uniform; in fact, if $U$ is a symmetric open neighborhood of the identity with compact closure, then $G$ is $U$-locally uniform.  In particular, compact groups are uniform.
\item Any abelian group is uniform.  More generally, if $U$ is a symmetric open neighborhood of the identity such that $xy=yx$ for all $x,y\in U$, then $G$ is $U$-locally uniform.
\item If $G$ admits a two-sided invariant metric, then $G$ is uniform.  More generally, if $d$ is a metric for $G$ and $U$ is a symmetric open neighborhood of the identity such that $d|(U^2\times U^2)$ is two-sided invariant, then $G$ is $U$-locally uniform.
\end{enumerate}
\end{exs}

\noindent We will encounter other examples of locally uniform groups later.  The following characterization of uniform groups appears in Enflo (without proof) and has an easy nonstandard proof.

\begin{lemma}
$G$ is uniform if and only if for every neighborhood $U$ of $e$ there is a neighborhood $V$ of $e$ such that $gVg^{-1}\subseteq U$ for all $g\in G$.
\end{lemma}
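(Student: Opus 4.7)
The plan is to leverage the previous lemma (with $U=G$), which tells us that $G$ is uniform if and only if $\mu$ is normal in $G^*$, i.e., $xyx^{-1}\in\mu$ for all $x\in G^*$ and $y\in\mu$. Once this nonstandard characterization is in hand, the proof becomes a routine transfer/saturation argument in each direction.

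For the backward direction ($\Leftarrow$), I would fix $x\in G^*$ and $y\in\mu$ and show $xyx^{-1}\in\mu$ by checking membership in each $U^*$ for $U$ a standard neighborhood of $e$. Given such $U$, apply the hypothesis to obtain a standard neighborhood $V$ of $e$ with $gVg^{-1}\subseteq U$ for every $g\in G$. Transferring this statement gives $gVg^{-1}\subseteq U^*$ for every $g\in G^*$ (interpreted internally), and since $y\in\mu\subseteq V^*$, we conclude $xyx^{-1}\in U^*$. As $U$ was arbitrary, $xyx^{-1}\in\mu$, so $G$ is uniform by the previous lemma.

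For the forward direction ($\Rightarrow$), fix a standard neighborhood $U$ of $e$, and suppose for contradiction that no standard $V$ works. Then for every standard neighborhood $V$ of $e$, the internal set
\[
B_V := \{(g,y)\in G^*\times G^* \ : \ y\in V^*\text{ and } gyg^{-1}\notin U^*\}
\]
is nonempty (by transfer of the standard negation applied to $V$). Since $B_{V_1\cap V_2}\subseteq B_{V_1}\cap B_{V_2}$, the family $\{B_V\}$ has the finite intersection property. By saturation, there is some $(g,y)\in\bigcap_V B_V$; then $y\in\bigcap_V V^*=\mu$ while $gyg^{-1}\notin U^*$, contradicting the normality of $\mu$ in $G^*$ guaranteed by uniformity.

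The one step that needs a little care is the saturation argument, where one must make sure the family $\{B_V\}$ is genuinely a family of internal sets indexed by a set of standard size (the neighborhood filter base of $e$, which may be large — but one can reduce to any cofinal subfamily, and the point is that $\kappa$-saturation applies as long as we work in a sufficiently saturated enlargement). This is the usual overspill/saturation packaging and is where the argument actually does work, rather than being a serious obstacle.
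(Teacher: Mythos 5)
Your proof is correct and rests on the same characterization the paper uses: by the preceding lemma (with $U=G$), uniformity of $G$ is equivalent to $\mu$ being normal in $G^*$. Your backward direction is essentially identical to the paper's. Your forward direction, however, takes a slightly different route. The paper argues directly: by saturation it picks an internal open neighborhood $V\subseteq\mu$, observes that normality of $\mu$ forces $gVg^{-1}\subseteq\mu\subseteq U^*$ for every $g\in G^*$, so the internal statement ``there is a neighborhood $V$ of $e$ with $gVg^{-1}\subseteq U$ for all $g$'' holds, and then transfers downward to get a standard $V$. You instead argue by contradiction, packaging the failure of the conclusion into a family of nonempty internal sets $B_V$ and saturating to produce a single pair $(g,y)$ witnessing that $\mu$ is not normal. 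Both arguments invoke saturation over the neighborhood filter of $e$ (the paper needs it to find an internal neighborhood inside $\mu$; you need it to intersect the $B_V$), so neither is more demanding than the other in that respect. The paper's version is arguably a bit cleaner because it exhibits the desired $V$ constructively rather than refuting its nonexistence, but your argument is a legitimate and correct alternative; your closing remark about the saturation step is appropriate and handled the same way throughout the paper.
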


\begin{proof}
Suppose that $G$ is uniform and $U$ is a neighborhood of $e$.  Let $V\subseteq \mu$ be an internal neighborhood of the identity.  For $g\in G$, we have $gVg^{-1}\subseteq \mu\subseteq U^*$ by (5) of the above theorem.  By transfer, the desired $V$ exists.  For the converse, we prove that $\mu$ is normal in $G^*$.  Given $x\in G^*$ and $y\in \mu$, we must show that $xyx^{-1}\in \mu$.  Fix $U$ an open neighborhood of the identity.  Then we are guaranteed $V$ so that $gVg^{-1}\subseteq U$ for all $g\in G$.  By transfer, we have that $xyx^{-1}\in x\mu x^{-1}\subseteq xV^*x^{-1}\subseteq U^*$.  Thus, $xyx^{-1}\in \mu$.
\end{proof}

\section{Nonstandard hulls}
Generalizing the notion of a nonstandard hull of a normed space, Luxembourg \cite{Lux} constructs a nonstandard hull for any uniform space $(X,\u)$ as follows.  Fix a set $P$ of pseudometrics generating $\u$.  Set $$X_{f,P}:=\{x\in X^* \ | \ p(x)\in \r_{f} \text{ for all }p\in P\}.$$  For $x\in X_{f,P}$, let $[x]$ denote the equivalence class of $x$ with respect to the equivalence relation $\approx_\u$.  Set $\hat{X}_P:=\{[x]\ | \ x\in X_{f,P}\}$.  Then $\hat{X}_P$ is a uniform space with respect to the family of pseudometrics $^\circ P:=\{^\circ p \ | \ p\in P\}$, where $^\circ p([x]):=\st(p(x))$.

Ideally, one would hope that the nonstandard hull of a topological group could once again be equipped with a group structure such that the resulting group is a topological group.  However, showing that the infinitesimals are a normal subgroup of the finite elements requires that the group multiplication be uniformly continuous.  If we only assume that the topological group is locally uniform, then we can obtain a nonstandard hull which is a \emph{local group}; we refer the reader to \cite{Gold} for an introduction to local groups.  

Let us carry out these details now.  First, for $x\in G^*$, we set $$\mu(x):=\{y\in G^* \ : \ x\approx_l y\} \quad \text{ and }\i(U^*):=\{x\in U^* \ | \ \mu(x)\subseteq U^*\}.$$  We clearly have that $U\subseteq \i(U^*)$.

Suppose that $G$ is $U$-locally uniform and $S\subseteq P_l$ is such that $S$ generates $\u_l$.  Without loss of generality, we may assume that if $p_1,\ldots,p_n\in S$, then $\max(p_1,\ldots,p_n)\in S$.  

We set $$U_{S,f}:=\{x\in \i(U^*) \ : \ p(x,e)\in \r_f \text{ for all }p\in S\}.$$  Given $x\in U_{S,f}$, we write $[x]$ for the $\approx_l$-equivalence class of $x$  (which coincides with the $\approx_r$-equivalence class of $x$).  We then set $\hat{U}_S:=\{[x] \ : \ x\in U_{S,f}\}$.  Given $p\in S$, we define $\sta p:\hat{U}_S\to \r$ by $\sta p([x]):=\st(p(x))$.  Then $\sta p$ is a left-invariant pseudometric on $\hat{U}_S$.  We view $\hat{U}_S$ as a uniform space by giving it the uniformity generated by the set $\hat{S}:=\{\sta p \ : \ p\in S\}$.  

Notice that if $x,y,x_1,y_1\in U_{S,f}$ are such that $x\approx_l x_1$, $y\approx_l y_1$, and $xy\in U_{S,f}$, then $x_1y_1\in U_{S,f}$.  Thus, we can set $$\o:=\{([x],[y]) \in \hat{U}_S\times \hat{U}_S \ : \ xy\in U_{S,f}\}.$$  We first claim that $\o$ is open.  (In fact, this was the entire reason for requiring $U_{S,f}\subseteq \i(U^*)$.)  Fix $([x],[y])\in\o$.  By saturation, there is $p\in S$ and $r\in \r^{>0}$ such that, for all $z\in G^*$, if $p(xy,z)<r$, then $\mu(z)\subseteq U^*$.  By uniform continuity, there is $p'\in S$ and $r'\in \r^{>0}$ such that, for all $a,b,c,d\in U^2$, if $p'(a,c),p'(b,d)<r'$, then $p(ab,cd)<r$.  Now suppose that $([x_1],[y_1])\in \hat{U}_S\times \hat{U}_S$ is such that $\sta p'([x],[x_1]),\sta p'([y],[y_1])<r'$.  Then $p'(x,x_1),p'(y,y_1)<r'$, so $p(xy,x_1y_1)<r$, whence $\mu(x_1y_1)\subseteq U^*$.  If $q\in S$, then $q(x_1y_1,e)\leq q(x_1y_1,x_1)+q(x_1,e)=q(y_1,e)+q(x_1,e)\in \r_f$.  Consequently, $\mu(x_1y_1)\subseteq U_{S,f}$.  It follows that $\o$ is open.

By uniform continuity, we can define $m:\o \to \hat{U}_S$ by $m([x],[y]):=[xy]$.  Arguing as in the previous paragraph, we see that $m$ is uniformly continuous.  Next notice that if $x\in U_{S,f}$, then $x^{-1}\in U_{S,f}$.  Indeed, it is easy to see that $x^{-1}\in U^*$ and $p(x^{-1},e)\in \r_f$ for every $p\in S$.  It remains to see that $\mu(x^{-1})\subseteq U^*$.  However, by uniform continuity, we have that $\mu(x^{-1})=\mu(x)^{-1}\subseteq U^*$ because $x\in \i(U^*)$ and $U$ is symmetric.  Thus, by uniform continuity, we can define $\iota:\hat{U}_S\to \hat{U}_S$ by $\iota([x])=[x^{-1}]$.  It is easy to see that $\iota$ is continuous.  It follows that $(\hat{U}_S,m,\iota,[e])$ is a globally inversional local group.  Moreover, $U\subseteq U_{S,f}$ and  the map $x\mapsto [x]:U\to \hat{U}_S$ is a uniformly continuous, injective strong morphism of local groups.

\begin{rmk}
Suppose that $A\subseteq U^*$ is such that:
\begin{itemize}
\item for all $x\in A$, $x^{-1}\in A$;
\item there is an open $V\subseteq U$ such that, for all $(x,y)\in A\times A$, if $xy\in V^*$, then $xy\in A$.
\end{itemize}

\noindent Then $\hat{A}:=\{[x] \ | \ x\in A\}$ is a local subgroup of $\hat{U}_S$.  Indeed, let $p\in S$ be such that $\{x\in G \ | \ p(x,e)<1\}\subseteq V$.  Now suppose that $([x],[y])\in (\hat{A}\times \hat{A})\cap \o$ and $\sta p([xy],[e])<\frac{1}{2}$.  Then $p(xy,e)<1$, so $xy\in V^*$, whence $xy\in A$.  Thus, $[x]\cdot [y]\in \hat{A}$.
\end{rmk}

\begin{rmk}
One should note that in the case that $G$ is uniform, the nonstandard hull we just constructed is just the usual nonstandard hull of a uniform space as constructed by Luxembourg.
\end{rmk}

\begin{ex}\label{compact}
Suppose that $G$ is locally compact.  Further suppose that $U$ is a symmetric open neighborhood of the identity with compact closure.  Then it is easy to see that $U_{S,f}=\i(U^*)=\{x\in U^* \ : \ x\approx_l y\text{ for some }y\in U\}$.  Consequently, the map $x\mapsto [x]:U\to \hat{U}_S$ is an isomorphism of local groups.
\end{ex}

\begin{ex}
If $S=P_l$, then we write $U_{f}$ instead of $U_{P_l,f}$ and $\hat{U}$ instead of $\hat{U}_{P_l}$.  We refer to $\hat{U}$ as the \emph{canonical nonstandard hull}.  Observe that the canonical nonstandard hull is in some sense the ``smallest'' of the nonstandard hulls.  Indeed, if $S\subseteq P_l$ is as above, then $U_f\subseteq U_{S,f}$ and the mapping $[x]\mapsto [x]:\hat{U}\to \hat{U}_S$ is an injective morphism of local groups.
\end{ex}

\begin{ex}\label{allfinite}
At the opposite extreme, if we set $P_1:=\{\min(p,1) \ : \ p\in P_l\}$, then $P_1$ generates $\u_l$ and $U_{P_1,f}:=\i(U^*)$.  Consequently, if $S\subseteq P_l$ is any generating set of pseudometrics, then the map $[x]\mapsto [x]:\hat{U}_S \to\hat{U}_{P_1}$ is an injective morphism of local groups.
\end{ex}

If $H$ is a local group with domain of multiplication $\Omega_H$, then one defines the notion ``$x^n$ is defined'' by recursion on $n$:  $x^1$ is always defined and, for $n\geq 2$, $x^n$ is defined if $x^i$ is defined for all $i<n$ and $(x^i,x^j)\in \Omega_H$ for all $i,j<n$ such that $i+j=n$.  (This is not the definition given in \cite{Gold} but is proved to be equivalent there.)
\begin{rmk}\label{defined}
An easy inductive argument shows that, for all $x\in U_{S,f}$, if $[x]^n$ is defined, then $x^n\in U_{S,f}$ and $[x]^n=[x^n]$.
\end{rmk}

\begin{rmk}
Observe that we only constructed nonstandard hulls for locally uniform groups.  In fact, for metrizable groups, this was a necessary assumption.  Indeed, suppose that $G$ is a topological group and we wanted to define $[x]\cdot [y]:=[xy]$ for $x,y\in U_F$, where $U_F$ is the set of all elements of $U^*$ ($U$ a neighborhood of the identity) which are ``finite'' in some sense.  Any sensible notion of finiteness will include the requirement that $V^*\subseteq U_F$ for some neighborhood $V$ of the identity.  Then the well-definedness of the group operation on the nonstandard hull implies that multiplication on $U_F$ be $S$-continuous, whence multiplication on $V$ is uniformly continuous, whence $G$ is locally uniform.
\end{rmk}

\begin{lemma}
Suppose that $G$ is both $U$-locally uniform and $V$-locally uniform.  Then $\hat{U}_S$ and $\hat{V}_S$ are locally isomorphic local groups.
\end{lemma}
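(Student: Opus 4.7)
\medskip
\noindent\textbf{Proof plan.}
The natural strategy is to pass through a common refinement: pick a symmetric open neighborhood $W$ of the identity with $W\subseteq U\cap V$, prove that $G$ is also $W$-locally uniform, and then show that the natural inclusion-induced map $\hat{W}_S\to\hat{U}_S$ is a local isomorphism onto an open neighborhood of $[e]$, and similarly for $\hat{V}_S$; composing one with the inverse of the other gives the desired local isomorphism. To see $G$ is $W$-locally uniform, invoke the equivalence (1)$\Leftrightarrow$(2) of the lemma: from $\u_l|U=\u_r|U$ and $W\subseteq U$ we get $\u_l|W=\u_r|W$. By symmetry it therefore suffices to prove $\hat{W}_S$ is locally isomorphic to $\hat{U}_S$.

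Define $\phi\colon \hat{W}_S\to\hat{U}_S$ by $\phi([x]_W):=[x]_U$. For well-definedness, note that $x\in W_{S,f}$ means $x\in\i(W^*)$ and $p(x,e)\in\r_f$ for all $p\in S$; but $W\subseteq U$ gives $\i(W^*)\subseteq \i(U^*)$, so $x\in U_{S,f}$. Since $\approx_l$ is the same relation on $G^*$ regardless of which local-uniformity datum is used, $\phi$ is well defined and injective. It is a strong morphism of local groups because whenever $xy\in W_{S,f}$ we have $xy\in U_{S,f}$, and $\phi$ visibly preserves the value $[xy]$; it preserves each pseudometric $\sta p$, hence is uniformly continuous.

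The heart of the argument is exhibiting neighborhoods realizing local isomorphism. Since $S$ generates $\u_l$ and $W$ is a neighborhood of $e$, there exist $p\in S$ and $r\in\r^{>0}$ with $\{z\in G:p(z,e)<r\}\subseteq W$. Let
\[
N:=\bigl\{[x]_U\in\hat{U}_S:\sta p([x]_U,[e])<r/2\bigr\},
\]
an open neighborhood of $[e]$ in $\hat{U}_S$. For $[x]_U\in N$ we have $p(x,e)<r$, so by transfer $x\in W^*$; and if $z\in\mu(x)$, left-invariance gives $p(z,e)\le p(z,x)+p(x,e)=p(x^{-1}z,e)+p(x,e)\approx p(x,e)<r$, so $z\in W^*$. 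Thus $x\in\i(W^*)$, and finiteness of all $q(x,e)$ ($q\in S$) is inherited from $x\in U_{S,f}$; hence $x\in W_{S,f}$. So $\phi$ restricts to a bijection $\phi^{-1}(N)\to N$ between open neighborhoods of the identity. The same finiteness argument applied to $[xy]_U$ shows that whenever $[x]_U,[y]_U\in N$ have their product in $N$, the product $[x]_W\cdot[y]_W$ is already defined in $\hat{W}_S$ and equals $\phi^{-1}([x]_U\cdot[y]_U)$; shrinking $N$ by continuity of $m$ and $\sta p$ to a symmetric subneighborhood closed under defined products guarantees this situation globally on the shrunken neighborhood. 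This gives the required isomorphism of local groups.

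The only real obstacle is Step 3: matching up the domains of multiplication carefully enough that $\phi$ restricts to a genuine local-group isomorphism in the sense of \cite{Gold}, not merely a bijective strong morphism. This is handled by choosing the pseudometric $p$ with $\{p(\cdot,e)<r\}\subseteq W$ and exploiting left-invariance of $p$ to propagate the bound from $x$ to all of $\mu(x)$, which is exactly what forces elements of the small neighborhood $N$ back into $W_{S,f}$.
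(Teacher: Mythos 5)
Your proof is correct and follows essentially the same route as the paper. The paper simply restricts both hulls to a common ball $V_{\sta p,r}$ in the hull, for $p\in S$ and $r$ small enough that the corresponding ball in $G^*$ is contained in $\i(U^*)\cap\i(V^*)$, whereas you route through an intermediate neighborhood $W\subseteq U\cap V$; but the decisive point in both arguments is identical, namely that left-invariance of $p$ propagates the bound $p(x,e)<r$ to all of $\mu(x)$, forcing any element of a small pseudometric ball to lie in both $U_{S,f}$ and $V_{S,f}$.
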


\begin{proof}
Let $p\in S$ and $r\in \r^{>0}$ be such that $V_{p,r}^*\subseteq \i(U^*)\cap \i(V^*)$.  Then $[x]\mapsto[x]:\hat{U}|V_{\sta p,r}\to \hat{V}|V_{\sta p,r}$ is a local group isomorphism.
\end{proof}

In view of the preceding lemma, given a set $S$ of generating pseudometrics, all of the above nonstandard hulls are locally isomorphic to one another and thus are ``essentially the same'' local group.  In fact, we could even define the \emph{germ nonstandard hull of $G$ with respect to $S$} to be the local group germ made up of all nonstandard hulls $\hat{U}_S$, where $G$ is $U$-locally uniform.  (A \emph{local group germ} is the equivalence class of a local group where the equivalence relation is local isomorphism.)

Sometimes, changing the generating set of pseudometrics has no effect on the nonstandard hull.

\begin{lemma}
Suppose that $U$ is metrizable, with compatible (not necessarily left-invariant) metric $d$.  Further suppose that there is a continuous map $\sqrt{\cdot}:U\to U$ so that $\sqrt{x}\cdot \sqrt{x}=x$ for all $x\in U$ and so that $\sqrt{\cdot}$ is a $d$-contraction.  Then $p(x,e)\in \r_f$ for all $p\in P$ and all $x\in U^*$ with $d(x,e)\in \r_f$.
\end{lemma}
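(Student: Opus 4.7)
The plan is to exploit the contraction property of $\sqrt{\cdot}$ to drive $x$ close to $e$ in the $d$-metric by iterated square-rooting, and then recover a finite bound on $p(x,e)$ from left-invariance of $p$ together with the triangle inequality applied to the decomposition $x = y_n^{2^n}$, where $y_n$ denotes the $n$-fold iterate of $\sqrt{\cdot}$ applied to $x$. Let $c \in (0,1)$ denote the contraction constant.

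First I would verify that $\sqrt{e}=e$: any fixed point $x_0$ of $\sqrt{\cdot}$ satisfies $x_0^2 = x_0$, hence $x_0 = e$, and any two fixed points of a strict contraction coincide by the defining inequality alone (no completeness of $U$ is required). Since $e$ is a fixed point, this forces $\sqrt{e}=e$. A straightforward induction on $n$ then gives
\[
d(y_n,e) \leq c^n\, d(x,e)
\]
for every $n \in \n$ and every $x \in U$, and by transfer this persists for $x \in U^*$ and standard $n$.

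Next I would invoke continuity of $p$ at $e$: since $d$ is compatible with the topology on $U$, there is a standard $\delta > 0$ such that $p(y,e)<1$ whenever $y \in U$ satisfies $d(y,e)<\delta$; by transfer the same holds for $y \in U^*$. Given $x \in U^*$ with $d(x,e) \leq K$ for some standard $K$, I pick a standard $n$ with $c^n K < \delta$. Then $d(y_n,e) < \delta$, so $p(y_n,e) < 1$.

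Finally, left-invariance of $p$ yields $p(y_n^k, y_n^{k-1}) = p(y_n, e)$ for each $k$, and the triangle inequality along the chain $e = y_n^0, y_n^1, \dots, y_n^{2^n} = x$ gives
\[
p(x,e) \leq \sum_{k=1}^{2^n} p(y_n^k, y_n^{k-1}) = 2^n \cdot p(y_n, e) < 2^n,
\]
which is a standard real, so $p(x,e) \in \r_f$, as required. No step strikes me as a serious obstacle; the two subtleties are verifying $\sqrt{e}=e$ so the iteration bound is clean, and ensuring that the exponent $n$ chosen depends only on the standard data $c$, $K$, $\delta$ (hence can be chosen standard) and not on the internal point $x$.
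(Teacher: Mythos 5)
Your proof is essentially the paper's proof: iterate $\sqrt{\cdot}$ to push $x$ into a $\delta$-ball where $p(\cdot,e)<1$, then undo the iteration using left-invariance of $p$ and the triangle inequality to conclude $p(x,e)\leq 2^n p(y_n,e)<2^n$. The only cosmetic difference is that you unroll the telescoping chain $e,y_n,y_n^2,\dots,y_n^{2^n}=x$, whereas the paper recurses on the one-step estimate $p(x,e)\leq p(x,\sqrt{x})+p(\sqrt{x},e)=2p(\sqrt{x},e)$.

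One caveat: your verification that $\sqrt{e}=e$ is circular. You correctly observe that any fixed point of $\sqrt{\cdot}$ must equal $e$ and that a strict contraction has at most one fixed point, but then write ``since $e$ is a fixed point'' --- which is precisely the assertion to be proved, and a strict contraction on a non-complete metric space need not have \emph{any} fixed point. You are right that the inequality $d(y_n,e)\leq c^n d(x,e)$ silently needs $d(\sqrt{x},e)=d(\sqrt{x},\sqrt{e})$; the paper makes the same tacit assumption, and the remark immediately after the lemma deals with it by weakening the hypothesis to $d(\sqrt{x},e)\leq c\cdot d(x,e)$, which (taking $x=e$) builds $\sqrt{e}=e$ in explicitly. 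In the corollary's application, where $\sqrt{\exp(v)}=\exp(v/2)$, the identity $\sqrt{e}=e$ holds for free.
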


\begin{proof}
Given $p\in P$, there is $\epsilon>0$ so that $d(y,e)<\epsilon$ implies $p(y,e)<1$.  Fix $c\in (0,1)$ such that $d(\sqrt{x},\sqrt{y})\leq c\cdot d(x,y)$ for all $x,y\in U$.  Given $n\in \n$ and $x\in U$, let $x^{\frac{1}{2^n}}$ denote the $n^{\text{th}}$ iterate of $\sqrt{\cdot}$ applied to $x$.  Then, for any $n$, $d(x^{\frac{1}{2^n}},e)\leq c^nd(x,e)$.  If $x\in U^*$ is such that $d(x,e)\in \r_f$, then for $n$ sufficiently large, $d(x^{\frac{1}{2^n}},e)<\epsilon$, whence $p(x^{\frac{1}{2^n}},e)<1$.  Now notice that $p(x,e)\leq p(x,\sqrt{x})+p(\sqrt{x},e)=2p(\sqrt{x},e)$ by left-invariance.  By induction, $p(x,e)\leq 2^np(x^{\frac{1}{2^n}},e)<2^n$.
\end{proof}

\begin{rmk}
In the proof of the previous lemma, all we needed was there to be $c\in (0,1)$ such that $d(\sqrt{x},e)\leq c\cdot d(x,e)$ for all $x\in U$.
\end{rmk}

\begin{cor}
Suppose that $E$ is a normed vector space and $\exp:E\to G$ is a local homeomorphism, say $\exp|V:V\to U$ is a homeomorphism, where $V$ is a balanced neighborhood of $0$.  Then there is an open neighborhood $U_1$ of $e$ in $G$ contained in $U$ such that $p(x,e)\in \r_f$ for all $x\in U_1^*$ and all $p\in P$.
\end{cor}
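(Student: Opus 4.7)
The plan is to reduce the corollary to the preceding lemma by equipping a sufficiently small neighborhood $U_1\subseteq U$ of $e$ with a compatible metric $d$ and a continuous square root map $\sqrt{\,\cdot\,}\colon U_1\to U_1$ satisfying $\sqrt{x}\cdot\sqrt{x}=x$ together with the contraction estimate of the preceding Remark, arranged so that every element of $U_1^*$ is automatically $d$-finite.

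Concretely, I would pick $r>0$ small enough that the open norm-ball $V_1:=\{v\in E : \|v\|<r\}$ sits inside $V$, and set $U_1:=\exp(V_1)$. Since $\exp|V$ is a homeomorphism, $\exp|V_1\colon V_1\to U_1$ is as well, and $U_1$ is an open neighborhood of $e$ contained in $U$. I then transport the norm to $U_1$ via
\[d(x,y):=\|\exp^{-1}(x)-\exp^{-1}(y)\|,\]
giving a compatible (though not left-invariant) metric on $U_1$. Because $V_1$ is bounded of norm-diameter $<2r$, transfer delivers $d(x,e)<r$ for every $x\in U_1^*$, so the $d$-finiteness hypothesis of the lemma comes for free.

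For the square root I would set $\sqrt{x}:=\exp\!\bigl(\tfrac12\exp^{-1}(x)\bigr)$. Since $V_1$ is balanced, $\tfrac12\exp^{-1}(x)\in V_1$, so $\sqrt{\,\cdot\,}$ is a well-defined continuous self-map of $U_1$; the $d$-contraction bound is immediate, since
\[d(\sqrt{x},\sqrt{y})=\tfrac12\,d(x,y),\]
giving more than enough for the constant $c=\tfrac12$ required by the Remark.

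The genuine content lies in the algebraic identity $\sqrt{x}\cdot\sqrt{x}=x$, and this is the step that relies on reading ``$\exp$'' as a Lie-theoretic exponential: the one-parameter subgroup property $\exp(v/2)\exp(v/2)=\exp(v)$ is what turns $\sqrt{\,\cdot\,}$ into a genuine algebraic square root. With this in hand, the preceding lemma, applied with $U_1$ in place of $U$, yields $p(x,e)\in\r_f$ for every $p\in P$ and every $x\in U_1^*$, which is exactly the statement of the corollary.
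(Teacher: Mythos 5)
Your proposal is correct and follows essentially the same route as the paper: transport the norm to a neighborhood of the identity via $\exp$, define the square root by halving in $E$, observe the contraction property, and invoke the preceding lemma. The only cosmetic difference is bookkeeping: the paper defines $d$ and $\sqrt{\,\cdot\,}$ on all of $U$, applies the lemma (actually the weaker one-sided estimate of the Remark, $d(\sqrt{x},e)\le c\,d(x,e)$), and then takes $U_1$ to be a finite-radius $d$-ball at the end, whereas you shrink to a bounded ball $U_1=\exp(V_1)$ up front so that $d$-finiteness on $U_1^*$ is automatic by transfer; you also verify the stronger two-sided contraction $d(\sqrt{x},\sqrt{y})=\tfrac12 d(x,y)$, which of course subsumes the one-sided estimate. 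Both are fine. You are also right to flag that the algebraic identity $\sqrt{x}\cdot\sqrt{x}=x$ silently invokes the one-parameter group property $\exp(v/2)\exp(v/2)=\exp(v)$, which is not literally part of the stated hypothesis ``$\exp$ is a local homeomorphism''; the paper's own proof makes exactly the same unremarked assumption (justified by the intended application to locally exponential Lie groups), so this is an implicit hypothesis of the corollary rather than a gap in your argument.
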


\begin{proof}
Let $d$ be the metric on $U$ given by $d(\exp(x),\exp(y)):=\|x-y\|$.  Now define $\sqrt{\cdot}:U\to U$ by $\sqrt{\exp(x)}:=\exp(\frac{1}{2}{x})$.  Then, for all $\exp(x)\in U$, we have $d(\sqrt{\exp(x)},e)=\|\frac{1}{2}x\|=\frac{1}{2}\|x\|=\frac{1}{2}d(\exp(x),e)$.  Thus, by the previous lemma, for all $x\in U^*$ with $d(x,e)\in \r_f$, we have $p(x,e)\in \r_f$ for all $p\in P$.  Thus, we can take $U_1$ to be any $d$-ball centered at $0$ of a finite radius.
\end{proof}

In particular, by the previous result, if $G$ is a locally exponential Lie group whose Lie algebra is normable, then we can form the nonstandard hull using any collection of left-invariant pseudometrics that generates the uniformity $\u_l$.   

\

\noindent \textbf{Metric nonstandard hulls}

\

Suppose now that $G$ is metrizable, say with left-invariant metric $d$.  We can then take $S=\{d\}$, in which case we denote $U_{S,f}$ by $U_{d,f}$ and $\hat{U}_S$ by $\hat{U}_d$.  Let $r\in \r^{>0}$ be such that, setting $U:=B_d(e,r)$, we have that $G$ is $U$-locally uniform.  Then $U_{d,f}:=\{x\in U^* \ | \ \st(d(x,e))<r\}$ and the local group $\hat{U}_d$ is metrizable with metric $\hat{d}([x],[y]):=\st(d(x,y))$.  We now address the question:  How are various metric nonstandard hulls related?

Suppose that $d$ is a left-invariant metric on $G$ and $S\subseteq P_l$ is a generating set of pseudometrics closed under $\max$.  Let $s\in \r^{>0}$ be small enough so that, setting $V:=B_d(e,s)$, we have that $G$ is $V$-locally uniform.  Fix $p\in S$ and $r\in \r^{>0}$ so that, for all $x\in G$, if $p(x,e)<r$, then $d(x,e)<\frac{s}{2}$.  Set $U:=\{x\in G \ | \ p(x,e)<r\}$, so $G$ is also $U$-locally uniform.  Notice also that if $x\in U_{P,f}$, then $\st(d(x,e))<s$, whence $x\in V_{d,f}$.  Consequently, we get a map $\phi:\hat{U}_P\to \hat{V}_d$, $\phi([x]_P)=[x]_d$, which is clearly injective and a morphism of discrete local groups.  In order to see that $\phi$ is continuous, it suffices to check continuity at $[e]_P$.  Given $\epsilon>0$, choose $q\in P$ and $\delta>0$ so that, for all $x\in G$, if $q(x,e)<\delta$, then $d(x,e)<\frac{\epsilon}{2}$.  Now suppose that $\hat{q}([x]_P,[e]_P)<\delta$.  Then $q(x,e)<\delta$, so $d(x,e)<\frac{\epsilon}{2}$, whence $\hat{d}([x]_d,[e]_d)<\epsilon$.  It follows that the metric nonstandard hull is, in some sense, the largest nonstandard hull.

Now suppose, in addition, that $S=\{d'\}$, where $d'$ is also a left-invariant metric on $G$.  We claim now that the map $\phi$ (where $U=B_{d'}(e,r)$) is an open morphism of local groups with open image.  Indeed, take $\epsilon\in (0,r]$.  We must show that $\phi(B_{\hat{d'}}([e]_{d'},\epsilon))$ is an open subset of $\hat{V}_d$.  Suppose that $\hat{d'}([x]_{d'},[e]_{d'})<\epsilon$, so $\st(d'(x,e))<\epsilon$.  Take $\delta\in \r^{>0}$ such that, for all $y\in G^*$, if $d'(x,y)<\delta$, then $\st(d'(y,e))<\epsilon$.  Take $\eta\in \r^{>0}$ such that, for all $a,b\in G$, if $d(a,b)<\eta$, then $d'(a,b)<\delta$.  (This uses left-invariance of both $\delta$ and $\delta'$.)  Now suppose that $\hat{d}([x]_d,[y]_d)<\eta$.  Then $d'(x,y)<\delta$, so $\st(d'(y,e))<\epsilon$.  In other words, $B_{\hat{d}}([x]_d,\eta)\subseteq \phi(B_{\hat{d'}}([e]_{d'},\epsilon))$.

It now follows that $\hat{U}_{d'}$ is isomorphic to $\hat{V}_d|\phi(\hat{U}_{d'})$.  Indeed, suppose that $[x]_d,[y]_d\in \phi(\hat{U}_{d'})$ are such that $([x]_d,[y]_d)\in \Omega_{\hat{V}_d}$ and $[x]_d\cdot [y]_d\in \phi(\hat{U}_{d'})$.  Then $[xy]_d=[z]_d$ for some $z\in U_{d',f}$.  Since $xy\approx z$, it follows that $xy\in U_{d',f}$, so $([x],[y])\in \Omega_{\hat{U}_{d'}}$.  In particular, we have proven the following result:

\begin{prop}
Any two metric nonstandard hulls are locally isomorphic.
\end{prop}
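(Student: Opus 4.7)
The plan is to observe that the two paragraphs immediately preceding the statement have essentially carried out all the work; the proposition is a clean repackaging of those computations. Suppose $d$ and $d'$ are two left-invariant compatible metrics on $G$, with associated metric nonstandard hulls $\hat{U}_d$ and $\hat{U'}_{d'}$ for some appropriate choices of $U$ and $U'$.

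First, I would reduce to a single convenient choice of defining neighborhood for each metric. By the earlier lemma stating that $\hat{U}_S$ and $\hat{V}_S$ are locally isomorphic whenever $G$ is both $U$- and $V$-locally uniform, it suffices to produce, for any convenient choice of radii, a local isomorphism between some $\hat{V}_d$ and some $\hat{U}_{d'}$.

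Next, I would apply the setup of the paragraph preceding the proposition with $S = \{d'\}$. Choose $s > 0$ so that $V := B_d(e,s)$ makes $G$ locally uniform, then (using compatibility of the two metrics) pick $r > 0$ so that $d'(x,e) < r$ implies $d(x,e) < s/2$, and set $U := B_{d'}(e,r)$. That paragraph constructs an injective continuous morphism of local groups $\phi\colon \hat{U}_{d'} \to \hat{V}_d$ given by $\phi([x]_{d'}) = [x]_d$, verifies that $\phi$ is open with open image, and concludes that $\phi$ induces an isomorphism of local groups between $\hat{U}_{d'}$ and the sub-local-group $\hat{V}_d|\phi(\hat{U}_{d'})$. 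Since $\phi(\hat{U}_{d'})$ is open in $\hat{V}_d$ and contains $[e]_d$, this is by definition a local isomorphism between $\hat{U}_{d'}$ and $\hat{V}_d$, which, combined with the reduction above, finishes the proof.

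The main obstacle is essentially bookkeeping: making sure the radii $r, s$ and neighborhoods $U, V$ are simultaneously admissible for the local uniformity hypotheses of both metrics, and noting that although the map $\phi$ is constructed asymmetrically (going from the $d'$-hull into the $d$-hull), the local isomorphism it yields is symmetric in $d$ and $d'$. All of the analytic content — continuity of $\phi$, openness of $\phi$, and the identification of $\hat{U}_{d'}$ with $\hat{V}_d|\phi(\hat{U}_{d'})$ — has already been established in the preceding discussion, so no new estimates need to be produced here.
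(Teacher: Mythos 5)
Your proposal is correct and takes essentially the same approach as the paper: the paper's "proof" of this proposition is precisely the two preceding paragraphs, which construct the injective, continuous, open strong morphism $\phi\colon \hat{U}_{d'}\to\hat{V}_d$ and identify $\hat{U}_{d'}$ with the open restriction $\hat{V}_d|\phi(\hat{U}_{d'})$. Your added remark that the earlier lemma on $\hat{U}_S$ versus $\hat{V}_S$ handles the freedom in choosing the defining neighborhoods for each metric is a correct and useful clarification of the bookkeeping the paper leaves implicit.
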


Since any two metric nonstandard hulls are locally isomorphic, we can speak of the \emph{metric germ nonstandard hull}.



\

\noindent \textbf{Global nonstandard hulls}

\

One may be a bit perturbed by the fact that the nonstandard hull of a locally uniform group is merely a local group.  However, some local groups (most importantly for us, our local nonstandard hulls) can be embedded into topological groups, a procedure that we briefly recall here; more details can be found in \cite{vdd-Gold}.  Until further notice, we let $H$ denote a globally inversional local group.

First, there exists a topological group $H^M$, called the \emph{Mal'cev hull of $H$}, and a local group morphism $\iota:H\to H^M$, satisfying the universal property that whenever $\phi:H\to T$ is a local group morphism into a topological group, there is a unique topological group morphism $\varphi:H^M\to T$ such that $\phi=\varphi \circ \iota$.  In fact, $H^M$ is the set of words on $H$ modulo the equivalence relation generated by the following four operations:

\begin{itemize}
\item $(x_1,\ldots,x_m)\to (x_1,\ldots,x_{i-1},x_ix_{i+1},x_{i+2},\ldots,x_m)$ if $(x_i,x_{i+1})\in \Omega_H$.
\item $(x_1,\ldots,x_m)\to (x_1,\ldots,x_{i-1},x_{i+2},\ldots,x_m)$ if $(x_i,x_{i+1})\in \Omega_H$ and $x_ix_{i+1}=1$.
\item $(x_1,\ldots,x_m)\to (x_1,\ldots,x_{i-1},a,b,x_{i+1},\ldots,x_m)$ if $x_i=ab$ with $(a,b)\in \Omega_H$.
\item $(x_1,\ldots,x_m)\to (x_1,\ldots,x_i,a,a^{-1},x_{i+1},\ldots,x_m)$ for any $a\in H$.
\end{itemize}

If we let $w_M$ denote the equivalence class of the word $w$ in $H^M$, then the group operation on $H^M$ is $w_M\cdot w'_M:=(w ^\frown w')_M$, the map $\iota:H\to H^M$ is given by $\iota(x)=(x)_M$, and the map $\varphi:H^M\to T$ is given by $\varphi((x_1,\ldots,x_m)_M)=\varphi(x_1)\cdots \varphi(x_m)$.

Given elements $a_1,\ldots,a_n\in H$ and $b\in H$, we write $(a_1,\ldots,a_n)\leadsto b$ to mean that there is a way of introducing parentheses into the sequence $a_1,\ldots,a_n$ such that all intermediate products exist and the resulting overall product is $b$.  (See \cite{vdd-Gold} for a precise definition by recursion.)
\begin{df}

\

\begin{enumerate}
\item We say that $H$ is \emph{neat} if $(x,y)\in \Omega_H$ implies $(xy,y^{-1})\in \Omega_H$.
\item We say that $H$ is $\infty$-associative if whenever $a_1,\ldots,a_n,b,c\in H$ are such that $(a_1,\ldots, a_n)\leadsto b$ and $(a_1,\ldots,a_n)\leadsto c$, then $b=c$.
\end{enumerate}
\end{df}

\begin{fact}[Mal'cev \cite{M}; van den-Dries \& Goldbring \cite{vdd-Gold}]
If $H$ is neat and $\infty$-associative, then $\iota:H\to H^M$ is \emph{injective}.
\end{fact}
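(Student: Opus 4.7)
The plan is to translate injectivity of $\iota$ into a statement about words: if $\iota(x)=\iota(y)$, then the singleton words $(x)$ and $(y)$ are related by a finite chain of applications of the four generating operations defining $H^M$, and we must deduce $x=y$.

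I would introduce the following invariant: for a word $w=(a_1,\dots,a_n)$, say $w$ is \emph{evaluable} if there exists a parenthesization of $w$ with all intermediate products lying in $\Omega_H$, and in that case let $\mathrm{ev}(w)$ denote the resulting product. By $\infty$-associativity, $\mathrm{ev}(w)$ is well-defined whenever it exists. Since $\mathrm{ev}((x))=x$ tautologically, it suffices to prove that if $w\sim w'$ via a single one of the four operations and $w$ is evaluable, then $w'$ is also evaluable with $\mathrm{ev}(w')=\mathrm{ev}(w)$. Applying this to a derivation of $(y)$ from $(x)$ then yields $y=\mathrm{ev}((y))=\mathrm{ev}((x))=x$.

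The invariance check splits into four cases. Operations $1$ and $3$ (contracting or expanding an adjacent product of factors already in $H$) are straightforward: a parenthesization of one word is transformed into a parenthesization of the other by splitting or merging a single internal node of the parse tree, and the hypothesis attached to the rewriting step---that $(a_i,a_{i+1})\in\Omega_H$, respectively $x_i=ab$ with $(a,b)\in\Omega_H$---ensures that the required intermediate products remain in $\Omega_H$.

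The delicate cases are operations $2$ and $4$, which insert or delete an adjacent inverse pair $a,a^{-1}$. For operation $4$, starting from a parenthesization of $w$ realizing $\mathrm{ev}(w)=c$, my plan is to use $\infty$-associativity to reparenthesize so that some sub-product $p$ of the existing parse tree appears adjacent to the insertion site, then replace $p$ by the computation $(p\cdot a)\cdot a^{-1}=p$. Neatness is exactly what is needed to guarantee $(pa,a^{-1})\in\Omega_H$ from $(p,a)\in\Omega_H$, so the extended parse tree is legal and evaluates to the same $c$. Operation $2$ is dual: first reparenthesize so that the pair $a,a^{-1}$ is computed as a single sub-product, then collapse it to the identity and prune. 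I expect the main obstacle to lie in operation $4$: one must verify that $\infty$-associativity really does allow rearranging the parse tree so that a suitable sub-product $p$ with $(p,a)\in\Omega_H$ is available at the insertion site. Once this is arranged, neatness takes over and the remainder of the verification is routine.
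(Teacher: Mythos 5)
Your plan — take evaluability as an invariant preserved by each of the four moves — is the natural first thing to try, but the key lemma it reduces to is not justified by what you write, and the hole is not where you expect it. The central misstep is that $\infty$-associativity is purely a \emph{uniqueness} statement: it says that if two parenthesizations of the same word both have all intermediate products in $\Omega_H$, then the results agree. It does not let you pass from one valid parse tree to another of a shape you prefer, yet your arguments for moves 2 and 4 rest entirely on ``using $\infty$-associativity to reparenthesize.''

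The other gap is that you dismiss move 1 (contraction) as ``straightforward,'' whereas it is the genuinely hard case. Expansion (move 3) is easy: replace the leaf $x_i$ of a valid parse tree by the subtree $a\cdot b$, legal since $(a,b)\in\Omega_H$ is part of the hypothesis of the move. But for contraction, if the parse tree $T$ witnessing evaluability of $w$ does not already compute $x_i\cdot x_{i+1}$ as a node — for instance $w=(a,b,c)$ parsed as $(ab)c$ while the move contracts $(b,c)$ — then producing a valid parse for $w'=(a,bc)$ requires $(a,bc)\in\Omega_H$, which neither the available data $(a,b),(ab,c),(b,c)\in\Omega_H$, nor neatness, nor $\infty$-associativity directly supplies; this strong reassociation property is essentially what the theorem is aiming to establish, not something one may assume. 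Move 2 fails for the same reason. Move 4, by contrast, is actually unproblematic but for a simpler reason than you give: replace the leaf $x_i$ by the subtree $x_i\cdot(a\cdot a^{-1})$, which needs only $(a,a^{-1}),(x_i,1)\in\Omega_H$, both automatic in a globally inversional local group; no appeal to neatness and no choice of a subproduct $p$ with $(p,a)\in\Omega_H$ is needed, so your stated worry there is beside the point while the real gaps in moves 1 and 2 go unaddressed. Note that the paper states this result as a Fact with citations and does not reprove it; the argument in \cite{vdd-Gold} does not follow the route you sketch.
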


We now return to the setting of a locally uniform group $G$ and a neighborhood $U$ of $1$ in $G$ such that $G$ is $U$-locally uniform.  We fix a generating set $S\subseteq P_l$ of pseudometrics and suppress mention of $S$ for the remainder of this subsection.  It is then easy to see that the local group $\hat{U}$ is $\infty$-associative and neat.  Consequently, $\hat{U}$ embeds into its Mal'cev hull, which we denote by $\hat{G}_U$.  One may refer to $\hat{U}$ as a \emph{local nonstandard hull} of $G$ and to $\hat{G}_U$ as a \emph{global nonstandard hull of $G$}.  

One may wonder what the relationship is between $G$ and $\hat{G}_U$?  First recall that the map $x\mapsto [x]:U\to \hat{U}$ is an injective morphism of local groups.  Since $\hat{U}$ embeds into its Mal'cev hull $\hat{G}_U$, we have an injective morphism of local groups $\phi:U\to \hat{G}_U$.  Let $U^M$ denote the Mal'cev hull of $U$ and let $\varphi:U^M\to \hat{G}_U$ be the unique topological group morphism ``extending'' $\phi$.  

\begin{lemma}
With the notation as above, we have that $\varphi$ is injective.  
\end{lemma}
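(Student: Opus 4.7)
My strategy is to deduce the injectivity of $\varphi$ by combining a strong-embedding property of $\phi$ with a Mal'cev-chain lifting argument.

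First I verify that $\phi = \iota_{\hat{G}_U}\circ \phi_0\colon U\to \hat{G}_U$ is a \emph{strong} morphism of local groups, i.e., for $a,b\in U$ we have $(\phi(a),\phi(b))\in \Omega_{\hat{U}}$ if and only if $(a,b)\in \Omega_U$, with $\phi(a)\phi(b)=\phi(ab)$ in that case. The nontrivial direction is: $(\phi(a),\phi(b))=([a],[b])\in \Omega_{\hat{U}}$ means $ab\in U_{S,f}\subseteq U^*$, and since $ab$ is standard and $U$ is open in $G$ (so $U^*\cap G=U$), this forces $ab\in U$, i.e., $(a,b)\in \Omega_U$. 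The forward direction is automatic because $\phi$ is a local group morphism.

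To show injectivity of $\varphi$, suppose $w=(x_1,\ldots,x_n)_M\in U^M$ with $\varphi(w)=1$; I want to conclude $w=1$ in $U^M$. By construction, $([x_1],\ldots,[x_n])_M=1$ in $\hat{U}^M=\hat{G}_U$, so there is a finite chain of the four Mal'cev operations applied in $\hat{U}$ connecting $([x_1],\ldots,[x_n])$ to the empty word. Any such operation whose inputs and outputs all lie in $\phi(U)$ reflects immediately to a corresponding Mal'cev operation in $U$ by the strong-embedding property: combinations (a) of $[a],[b]$ with $a,b\in U$ land in $\phi(U)$ precisely when $ab\in U$, and cancellations (b) preserve $\phi(U)$-entries automatically. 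Consequently, if I can arrange the chain so that every intermediate word has entries only in $\phi(U)$, the reorganized chain lifts directly to a chain in $U^M$ exhibiting $w=1$.

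The main obstacle is precisely this reorganization. Intermediate words in the chain may contain entries in $\hat{U}\setminus\phi(U)$, i.e., classes $[y]$ of nonstandard $y\in U_{S,f}\setminus U$, introduced via the splitting operation (c) or the inverse-pair insertion (d). Since the chain begins and ends with words whose entries lie in $\phi(U)$, every such nonstandard entry must ultimately cancel within the chain. The plan is to show, by a combinatorial rewriting argument in the spirit of the Mal'cev-hull machinery developed in \cite{vdd-Gold}, that each nonstandard ``excursion'' can be replaced by a detour lying entirely within $\phi(U)$: one uses $\infty$-associativity and neatness of both $U$ and $\hat{U}$ to reorder, merge, and eliminate the matched insertion/deletion pairs of nonstandard entries, without changing the endpoints of the segment under scrutiny. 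Iterating this reduction shortens the chain's ``nonstandard content'' until only $\phi(U)$-entries remain, at which point the lifted chain in $U^M$ witnesses $w=1$ and yields the injectivity of $\varphi$.
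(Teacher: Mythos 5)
Your opening observation is correct and worth recording: for standard $a,b\in U$, if $ab\in U_{S,f}\subseteq U^*$ then $ab\in U^*\cap G=U$, so $\phi$ is a strong morphism and $\phi(U)$ is ``saturated'' in $\hat{U}$ (products of $\phi(U)$-entries formed in $\Omega_{\hat{U}}$ stay in $\phi(U)$). The paper does not isolate this, and it is a clean fact.

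However, the core of your argument is missing. You reduce the problem to the claim that any Mal'cev chain in $\hat{U}$ from $([x_1],\ldots,[x_m])$ to the empty word can be rewritten so that every intermediate word has entries only in $\phi(U)$, and you describe this as ``the plan'' without carrying it out. This is a genuine gap, not merely an omitted routine verification. What you are implicitly asserting is a purely local-group-theoretic statement: that a saturated, symmetric local subset $K$ of a neat, $\infty$-associative local group $H$ satisfies $K^M\hookrightarrow H^M$. That is not a known consequence of the Mal'cev machinery of \cite{vdd-Gold} (which establishes injectivity of $H\to H^M$, not comparisons between $K^M$ and $H^M$), and it is not at all clear that your proposed rewriting --- reordering and cancelling matched insertion/deletion pairs of nonstandard entries --- terminates or even makes local sense: a nonstandard entry introduced by a split may flow through many combinations with standard entries before it is absorbed, so there is no obvious ``matched pair'' structure to eliminate, and $\infty$-associativity alone does not give you a normal form for chains.

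The paper's proof takes a fundamentally different and much shorter route, and it is worth internalizing why. Instead of trying to push nonstandard entries out of the chain, it lifts the chain of words on $\hat{U}$ to an internal chain of words on $U^*$ by choosing compatible representatives move by move (for a split $[x]=[a][b]$, replace the representative of one factor by $a^{-1}x$; similarly for cancellation), so that $(x_1,\ldots,x_m)$ is seen to be internally equivalent to the empty word over $U^*$. Since the $x_i$ are standard and ``there is a chain of Mal'cev moves of length $k$ from $(x_1,\ldots,x_m)$ to the empty word over $U$'' is a first-order statement about $G$ with standard parameters, transfer immediately gives a chain over $U$ itself. This uses the nonstandard structure (via transfer) in an essential way and entirely sidesteps the combinatorial rewriting you are proposing. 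If you want to pursue your route, you would first need to state and prove the purely combinatorial lemma about $K^M\hookrightarrow H^M$ for saturated $K$; as written, your proof does not establish the lemma.
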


\begin{proof}
Suppose that $\varphi((x_1,\ldots,x_m)_M)$ is the identity.  Then $\phi(x_1)\cdots \phi(x_m)$ is the identity in $\hat{G}_U$, meaning that there is a sequence $w_1,\ldots,w_k$ of words on $\hat{U}$ starting with $([x_1],\ldots,[x_m])$ and ending in the empty word, where each $w_{i+1}$ is obtained from $w_i$ using one of the four ``moves'' from above.  We then have that $(x_1,\ldots,x_m)$ is internally equivalent to the empty word in $U^M$, whence by transfer, it is actually equivalent to the empty word.
\end{proof}

By the universal mapping property, there is a canonical group morphism $i^M:U^M\to G$ induced by the inclusion $i:U\to G$.  Since $i^M$ acts homeomorphically on $U$, $i^M$ is a covering map.  If $i^M$ is an isomorphism, then we get an injective morphism $\varphi:G\to \hat{G}_U$ of topological groups (which is something that one expects of a nonstandard hull operation).  There is one natural setting when $i^M:U^M\to G$ is an isomorphism, as the following unpublished result of Lou van den Dries demonstrates:

\begin{prop}[van den Dries]\label{malsim} Suppose $G$ is locally path-connected and simply connected, and $U$ is connected. Then $i^M$ is a topological group isomorphism. 
\end{prop}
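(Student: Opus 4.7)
The plan is to show that $i^M$ is bijective; since it is already a continuous group morphism and a local homeomorphism onto its image, bijectivity will upgrade it to a topological group isomorphism. Surjectivity will come from $G$ being connected, and injectivity from covering-space theory applied to $i^M:U^M\to G$.

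First, I would verify surjectivity. Since $i:U\to G$ factors as $i^M\circ \iota$, and $i^M$ restricts to a homeomorphism on $\iota(U)$, the image $i^M(U^M)$ is a subgroup of $G$ containing the open set $U$, hence is an open subgroup. Open subgroups of topological groups are also closed, and $G$ is connected (simple connectedness includes path-connectedness). Therefore $i^M(U^M)=G$.

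Second, I would verify that $U^M$ is connected. Since $U$ is a symmetric connected neighborhood of the identity, $\iota(U)=\iota(U)^{-1}$ is a connected subset of $U^M$ containing the identity, and for each $n\geq 1$ the set $\iota(U)^n$ is connected as the continuous image of the connected space $\iota(U)^{\times n}$ under iterated multiplication. Each of these sets contains the identity of $U^M$, so their union is connected; but by the Mal'cev construction this union is exactly the subgroup generated by $\iota(U)$, which is all of $U^M$.

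Finally, the excerpt has already noted that $i^M$ is a covering map. Since $G$ is locally path-connected and $i^M$ is a local homeomorphism, $U^M$ inherits local path-connectedness, and being connected it is therefore path-connected. A connected covering of a simply connected, locally path-connected space is necessarily one-sheeted: given $x,y\in U^M$ with $i^M(x)=i^M(y)$, a path from $x$ to $y$ projects to a loop in $G$ whose null-homotopy lifts to force $x=y$. Hence $i^M$ is a bijective local homeomorphism, hence a homeomorphism, and therefore an isomorphism of topological groups. The main subtlety lies in verifying that $U^M$ is locally path-connected so that the standard covering-space classification applies, but this follows automatically from $i^M$ being a local homeomorphism onto a locally path-connected base.
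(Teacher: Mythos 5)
Your proposal is correct and follows essentially the same route as the paper's proof: establish that $U^M$ is connected (using that the image of the connected symmetric set $U$ generates $U^M$), then apply the covering-space fact that a connected covering of a simply connected, locally path-connected space is one-sheeted. You merely spell out the details the paper cites as "a standard consequence of the Monodromy Theorem" (surjectivity via the open-subgroup argument, and one-sheetedness via lifting a null-homotopy), but the underlying argument is the same.
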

\begin{proof} Since $U$ is connected, so is $U^M$ (as the image of $U$ generates $U^M$).  It remains to use the fact (which is a standard consequence of the Monodromy Theorem) that  any covering map $Y\to X$, where $Y$ is connected and $X$ is simply connected and locally path-connected, is an isomorphism.
\end{proof}


For use in the last section, we will need the following other unpublished result of Lou van den Dries concerning the map $i^M$.

\begin{prop}[van den Dries]\label{unimal} Suppose $G$ is connected and locally simply connected.  Let $V$ be a simply connected open neighborhood 
of $1$ 
in $G$, and $U$ a connected symmetric open neighborhood of $1$ in 
$G$ with $U^2\subseteq V$. Then $i^M: U^M \to G$ is a 
universal group covering of $G$.
\end{prop}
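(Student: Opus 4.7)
The plan is to reduce the claim to Proposition \ref{malsim} by lifting the entire situation to the universal covering group of $G$. Since $G$ is connected and locally simply connected (hence locally path-connected), the universal covering $\tilde p : \tilde G \to G$ exists as a topological group morphism with $\tilde G$ connected and simply connected. The goal is to produce a canonical topological group isomorphism $U^M \cong \tilde G$ under which $i^M$ corresponds to $\tilde p$; this will identify $i^M$ with a universal covering of $G$.

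First I would choose an open neighborhood $\tilde V$ of the identity in $\tilde G$ such that $\tilde p|_{\tilde V} : \tilde V \to V$ is a homeomorphism; such $\tilde V$ exists because $V$ is simply connected, so $\tilde p$ trivializes over $V$. Set $\tilde U := (\tilde p|_{\tilde V})^{-1}(U)$, which is a connected symmetric open neighborhood of the identity in $\tilde G$ that maps homeomorphically onto $U$. The main technical step is to verify $\tilde U^2 \subseteq \tilde V$: the image of $\tilde U \times \tilde U$ under multiplication in $\tilde G$ is connected, contains the identity, and projects into $U^2 \subseteq V$ under $\tilde p$; hence it is contained in the connected component of $\tilde p^{-1}(V)$ containing the identity, which is precisely $\tilde V$. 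This is where the hypothesis $U^2 \subseteq V$ with $V$ simply connected is essential, and it is the main obstacle in the argument.

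With $\tilde U^2 \subseteq \tilde V$ in hand, $\tilde p|_{\tilde U} : \tilde U \to U$ is easily seen to be an isomorphism of local groups (with respect to the natural local group structures inherited from $\tilde G$ and $G$): multiplication is defined on exactly the same pairs on both sides, because $\tilde p$ restricted to $\tilde V$ respects products that stay inside $\tilde V$. By the universal property of the Mal'cev hull, this lifts to a topological group isomorphism $\tilde U^M \cong U^M$ compatible with the canonical maps into $\tilde G$ and $G$. Finally, applying Proposition \ref{malsim} to the simply connected $\tilde G$ and its connected neighborhood $\tilde U$ yields that the induced map $\tilde U^M \to \tilde G$ is a topological group isomorphism. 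Composing these identifications produces an isomorphism $U^M \cong \tilde G$ under which $i^M$ becomes $\tilde p$, completing the proof that $i^M$ is a universal group covering of $G$.
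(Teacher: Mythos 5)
Your proof takes essentially the same route as the paper's: lift to the universal cover $\tilde{G}$, pull $U$ back to a sheet $\tilde{U}$ over the simply connected $V$, verify $\tilde{U}^2\subseteq \tilde{V}$, deduce a local group isomorphism $G|U\cong \tilde{G}|\tilde{U}$, apply the Mal'cev universal property, and invoke Proposition~\ref{malsim}. Your connectedness argument for $\tilde{U}^2\subseteq\tilde{V}$ (the image of $\tilde{U}\times\tilde{U}$ under multiplication is a connected set containing $e$ lying in $p^{-1}(V)$, hence in the identity sheet $\tilde{V}$) is a slight repackaging of the paper's ``open-and-closed subset of the connected $U\times U$'' argument, but amounts to the same idea; you do omit the explicit check that $\tilde{U}$ is symmetric, which is needed for the local-group structure, though the same connectedness trick yields it immediately.
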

\begin{proof} We only need to show that
$U^M$ is simply connected.
Let $p: \tilde{G} \to G$ be the usual universal group covering 
of $G$, let $V'$ be the connected component of the identity in $p^{-1}(V)$. 
Then $p$ maps $V'$ homeomorphically onto $V$, and
$V'$ is open-and-closed in $p^{-1}(V)$. Let $v\mapsto v': V \to V'$ be the
inverse of $x\mapsto p(x): V' \to V$, and set 
$$U'\ :\ =\{u': u\in U\}\ =\ V'\cap p^{-1}(U).$$ 
Then $U'$ is a connected open neighborhood of the identity
in $\tilde{G}$. Moreover, $U'^2\subseteq V'$, because 
$u_1'u_2'\in p^{-1}(U^2)\subseteq p^{-1}(V)$ for all $u_1, u_2\in U$
and so $$\{(u_1,u_2)\in U\times U:\ u_1'u_2'\in V'\}$$ is
open-and-closed in the connected space $U\times U$. Likewise, considering
$\{u\in U:\ u'^{-1}\in V'\}$ we see
that $U'$ is symmetric. It is now easy to check 
that $u \mapsto u' \colon G|U \to \tilde{G}|U'$ is an isomorphism of 
local groups.
This induces a topological group isomorphism $U^M\cong (U')^M$.
As $U'$ is connected, $(U')^M$ is
homeomorphic to $\tilde{G}$, by Corollary~\ref{malsim}. 
Thus $U^M$ is simply connected.
\end{proof}

We should stress that neither Proposition \ref{malsim} nor Proposition \ref{unimal} require that $G$ be locally uniform.

In general, if $G$ is both $U$-locally uniform and $V$-locally uniform, then $\hat{G}_U$ and $\hat{G}_V$ can be non-isomorphic, whence the global nonstandard hull construction is \emph{non-canonical}.  For example, let $G$ be a compact, locally connected, non-connected group (e.g. $G=O_n(\r)$).  Let $U$ be the connected component of the identity in $G$, a symmetric open neighborhood of the identity.  Then $G$ is both $U$-locally uniform and $G$-locally uniform.  Notice that $\hat{U}$ is isomorphic (as a local group) to $U$ (see Example \ref{compact}), so $\hat{G}_U$ is connected, while $\hat{G}_G=\hat{G}$ is isomorphic to $G$, which is not connected.

\section{$\u$-finiteness}

There is another notion of finiteness for uniform spaces due to Henson \cite{henson}.  Suppose that $(X,\u)$ is a uniform space.  We say that $a\in X^*$ is \emph{$\u$-finite} if, for every $A\in \u$, there is a sequence $a_0,\ldots,a_n$ from $X^*$ such that $a_0=a$, $a_n\in X$, and $(a_i,a_{i+1})\in A^*$ for each $i<n$.  It is easy to see that every $\u$-finite element of $X$ is also finite in our above sense, that is, if $P$ is a family of pseudometrics generating $\u$, then whenever $a\in X^*$ is $\u$-finite, then $p(a,x)\in \r_f$ for all $p\in P$ and all $x\in X$.  We let $X_f^\u$ denote the set of $\u$-finite points of $X^*$.

Returning to our situation of locally uniform groups:  Suppose that $G$ is $U$-locally uniform and suppose that $V$ is a symmetric open neighborhood of the identity such that $V^2\subseteq U$.

\begin{lemma}
$(U_f^\u)^{-1}=U_f^\u$ and $V_f^\u\cdot V_f^\u\subseteq U_f^\u$.
\end{lemma}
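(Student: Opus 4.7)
The plan is to handle both parts via the uniform continuity available from the $U$-locally uniform structure: by Lemma \ref{locuniform}, inversion $x\mapsto x^{-1}$ on $U$ is $\u$-uniformly continuous, and multiplication $U^2\times U^2\to U^4$ is $\u$-uniformly continuous by hypothesis. In both parts, the core idea is to take a $\u$-finiteness chain witnessed at a finer entourage $B$ and push it forward through inversion or multiplication to produce a chain at the desired entourage $A$.

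For the first identity, it suffices to show $(U_f^\u)^{-1}\subseteq U_f^\u$, since applying this inclusion to $a^{-1}$ gives the reverse inclusion. Given $a\in U_f^\u$, symmetry of $U$ gives $a^{-1}\in U^*$. To verify $\u$-finiteness of $a^{-1}$, I would fix an entourage $A$ and use $\u$-uniform continuity of inversion to produce an entourage $B$ with $(x,y)\in B\cap(U\times U)\Rightarrow(x^{-1},y^{-1})\in A$. Feeding $B$ into the hypothesis on $a$ produces a chain $a_0=a,a_1,\ldots,a_n$ with $a_n\in U$ and $(a_i,a_{i+1})\in B^*$; inverting termwise yields the required $A$-chain from $a^{-1}$ to the standard point $a_n^{-1}$.

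For the product inclusion, take $a,b\in V_f^\u$, so $ab\in (V^2)^*\subseteq U^*$. Given an entourage $A$, uniform continuity of multiplication (restricted to $V\times V$, whose image lies in $V^2\subseteq U$) supplies an entourage $B$ of $V$ such that whenever $(c,c'),(d,d')\in B$, we have $(cd,c'd')\in A$. Applying $\u$-finiteness of $a$ and $b$ separately to $B$ gives chains $a_0=a,\ldots,a_n\in V$ and $b_0=b,\ldots,b_m\in V$ with consecutive pairs in $B^*$. I would then concatenate them into
$$ab=a_0b_0,\ a_1b_0,\ldots,a_nb_0,\ a_nb_1,\ldots,a_nb_m,$$
a chain lying in $U^*$ (because $V\cdot V\subseteq U$), ending at the standard point $a_nb_m\in V^2\subseteq U$, whose consecutive pairs all belong to $A^*$ by the choice of $B$ (using the diagonal pair $(b_0,b_0)$ in the first half of the chain and $(a_n,a_n)$ in the second half).

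The main subtlety, and the precise reason for requiring $V^2\subseteq U$, is to guarantee that the concatenated chain stays inside $U^*$ and that its terminal point $a_nb_m$ is standard \emph{in} $U$; without this containment the witness would not live in $U_f^\u$. Beyond this, the argument is a routine transfer of uniform continuity to chains with no unexpected difficulties.
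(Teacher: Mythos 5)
Your proof is correct and follows essentially the same strategy as the paper's: both establish the product inclusion by concatenating two chains (one for each factor) and pushing them forward through the group operation, using uniform continuity near the identity. The only small variation is that the paper treats the two legs asymmetrically (left-invariance of $p$ for the first leg, then uniform continuity of right translation by the standard endpoint $y_n$ for the second), whereas you invoke joint uniform continuity of multiplication once, with a diagonal pair in each leg; this is an inessential stylistic difference.
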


\begin{proof}
We only prove the second assertion; the first assertion is similar using uniform continuity of inversion.  Suppose that $x,y\in V_f^\u$.  Fix $p\in P$ and $\epsilon\in \r^{>0}$.  Take a sequence $y_0,\ldots,y_n\in V^*$ such that $y_0=y$, $y_n\in V$ and $p(y_i,y_{i+1})<\epsilon$ for each $i<n$.  Since the map $a\mapsto ay_n:V\to U$ is uniformly continuous, there is $q\in P$ and $\delta>0$ such that, for all $a,a'\in V$, $q(a,a')<\delta\Rightarrow p(ay_n,a'y_n)<\epsilon$.  Take a sequence $x_0,\ldots,x_m\in V^*$ such that $x_0=x$, $x_m\in V$, and $q(x_j,x_{j+1})<\delta$ for $j<m$.  Then the sequence $x_0y_0,\ldots,x_0y_n,x_1y_n,\ldots,x_my_n$ witnesses that $xy\in U_f^\u$.
\end{proof}

\begin{cor}
If $G$ is a uniform group, then $G_f^\u$ is a subgroup of $G_f$.
\end{cor}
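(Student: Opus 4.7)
The plan is to deduce this almost immediately from the preceding lemma, since ``uniform'' means we can take $U=G$ in the locally uniform hypothesis. Under that choice, $V=G$ also satisfies $V^2\subseteq U$ trivially, so the lemma applies with $U=V=G$.

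First I would note that the identity $e$ lies in $G_f^\u$, since for every entourage $A\in\u$ the length-one ``sequence'' $a_0=e\in G$ vacuously satisfies the required chain condition. Next I would invoke the previous lemma with $U=V=G$ to obtain both $(G_f^\u)^{-1}=G_f^\u$ and $G_f^\u\cdot G_f^\u\subseteq G_f^\u$, which together with the existence of the identity shows that $G_f^\u$ is a subgroup of $G^*$.

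Finally, to see that $G_f^\u$ sits inside $G_f$, I would cite the observation recorded just before the lemma: every $\u$-finite element $a\in X^*$ of a uniform space satisfies $p(a,x)\in\r_f$ for every pseudometric $p$ in a generating family and every standard $x\in X$. Applied here with $x=e$ and the generating family $P_l$, this gives $G_f^\u\subseteq G_f$, completing the proof.

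There is essentially no obstacle: the corollary is a direct packaging of the lemma in the special case where $U$ may be taken to be all of $G$. The only thing worth being careful about is that uniformity of $G$ genuinely permits the choice $U=V=G$ in the hypothesis of the lemma, and that the chain-of-entourages definition of $\u$-finiteness handles the identity element trivially.
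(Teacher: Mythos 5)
Your proposal is correct and follows exactly the same route the paper intends: take $U=V=G$ in the preceding lemma to get closure under products and inverses, note $e\in G_f^\u$, and cite the earlier observation that $\u$-finite elements are finite to get $G_f^\u\subseteq G_f$. Nothing to add.
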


Suppose that $G$ is a uniform group and set $\hat{G}^\u:=G_f^\u/\mu$.  By the last corollary, $\hat{G}^\u$ is a subgroup of $\hat{G}_S$ for any generating set $S\subseteq P_l$.

By Theorems 3.2 and 3.3 of \cite{henson}, we have $X^*=X_f^\u$ if and only if every uniformly continuous function $X\to \r$ is bounded if and only if $X$ is ``finitely chainable.''  (This is some strengthening of the notion of ``pseudocompact.'')  In particular, if $X$ is finitely chainable, then $p(a,x)\in \r_f$ for all $p\in P$, all $a\in X^*$, and all $x\in X$, where $P$ is a generating family of pseudometrics for the uniformity on $X$.  Thus, for finitely chainable uniform groups, it does not matter what family of left-invariant pseudometrics we take in the definition.  We should observe that if a group is finitely chainable, then it can never have $\r$ as a quotient (for the quotient map $\pi:G\to \r$ would be uniformly continuous).  More generally, if $G$ is complete and finitely chainable, then any topological group morphism $\phi:G\to \r$ must be trivial.  (Indeed, in this case, $\phi(G)$ is a closed subgroup of $\r$, whence is $\{0\}$, $\r$, or $\z\cdot r$ for some $r\in \r$.)  Observe also that if $G$ is uniform but not finitely chainable, then $\hat{G}^\u$ is a proper subgroup of the canonical nonstandard hull.

\begin{question}
Is there a characterization of the finitely chainable groups?
\end{question}

\

\noindent \textbf{A test-case:  Locally convex vector spaces}

\

\noindent We now consider the special case of the additive group of a locally convex vector space.  Let $(E,+)$ be a locally convex space.  Then there is yet another notion of finiteness for such spaces.  Let $\Gamma$ denote the set of continuous seminorms on $E$.  We say that $x\in E^*$ is \emph{tvs-finite} if $p(x)\in \r_f$ for every $p\in \Gamma$.  We let $E_f^{\tvs}$ denote the set of tvs-finite elements of $E^*$.

\begin{prop}
$E_f=E_f^\u=E_f^{\tvs}$.
\end{prop}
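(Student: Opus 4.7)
The plan is to establish the cyclic chain of inclusions $E_f^{\u} \subseteq E_f \subseteq E_f^{\tvs} \subseteq E_f^{\u}$. The first inclusion is the general fact already recorded at the start of this section: since $P_l$ generates the uniformity $\u$ on $E$, every $\u$-finite point $x \in E^*$ automatically satisfies $p(x,0) \in \r_f$ for every $p \in P_l$, so $x \in E_f$.

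For $E_f \subseteq E_f^{\tvs}$, observe that because $(E,+)$ is abelian, every continuous seminorm $p \in \Gamma$ yields a continuous translation-invariant (hence left-invariant) pseudometric $d_p(x,y) := p(x-y)$ lying in $P_l$. Consequently, if $x \in E_f$, then $p(x) = d_p(x,0) \in \r_f$ for every $p \in \Gamma$, so $x \in E_f^{\tvs}$.

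The substantive step is $E_f^{\tvs} \subseteq E_f^{\u}$, and this is where the linear structure enters. Fix $x \in E_f^{\tvs}$ and an arbitrary entourage $A \in \u$. Since $E$ is locally convex, $\u$ has a basis consisting of the entourages $V_{p,\epsilon} := \{(a,b) \in E \times E : p(a-b) < \epsilon\}$ with $p \in \Gamma$ and $\epsilon > 0$, so we may pick such a basic entourage $V_{p,\epsilon} \subseteq A$. Because $p(x) \in \r_f$, we can choose a standard $n \in \n$ with $p(x) < n\epsilon$, and set $x_i := \tfrac{n-i}{n}\, x$ for $i = 0, \ldots, n$. Then $x_0 = x$, $x_n = 0 \in E$, and $x_i - x_{i+1} = \tfrac{1}{n} x$, so $p(x_i - x_{i+1}) = p(x)/n < \epsilon$ and hence each consecutive pair $(x_i, x_{i+1})$ lies in $V_{p,\epsilon}^* \subseteq A^*$. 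This chain witnesses that $x \in E_f^{\u}$, closing the loop. The argument is entirely routine; the only mild subtlety is that $\u$-finiteness is defined relative to arbitrary entourages, so one must first reduce to a seminorm-induced basic entourage, after which linear interpolation along the segment from $x$ to $0$ supplies the required chain at essentially no cost.
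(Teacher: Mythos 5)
Your proof is correct and takes essentially the same approach as the paper: the linear interpolation $x_i := \tfrac{n-i}{n}x$ is exactly the paper's key step for $E_f^{\tvs}\subseteq E_f^\u$, and the observation that each seminorm induces a left-invariant pseudometric is the paper's argument for $E_f\subseteq E_f^{\tvs}$. The only cosmetic difference is organizational: you close the cycle by invoking the general fact $E_f^\u\subseteq E_f$ from the section preamble, whereas the paper instead proves $E_f^\u\subseteq E_f^{\tvs}$ directly by a short chain argument before relying (implicitly) on the same general fact.
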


\begin{proof}
Suppose that $x\in E_f^\u$ and let $p\in \Gamma$.  Take a sequence $x_0,\ldots,x_n\in E^*$ such that $x_0=x$, $x_n\in E$, and $p(x_i-x_{i+1})<1$ for $i<n$.  Then $p(x-x_n)<n$, so $p(x)\in \r_f$.  Thus, $x\in E_f^{\tvs}$ and $E_f^\u\subseteq E_f^{\tvs}$.  Conversely, suppose that $x\in E_f^{\tvs}$.  Fix $p\in \Gamma$ and $\epsilon>0$.  Let $n\in \n^{>0}$ be such that $\frac{1}{n}p(x)<\epsilon$; this is possible because $p(x)\in \r_f$.  Let $x_i:=\frac{n-i}{n}x\in E^*$.  Then $x_0=x$, $x_n=0$, and $p(x_i-x_{i+1})=p(\frac{1}{n}x)=\frac{1}{n}p(x)<\epsilon$.  Thus $x\in E_f^\u$ and $E_f^\u=E_f^{\tvs}$.  It remains to show that $E_f\subseteq E_f^{\tvs}$.  However, this follows from the fact that any $p\in \Gamma$ induces $\hat{p}\in P_l$ by $\hat{p}(x,y)=p(x-y)$.
\end{proof}

Consequently, we get one notion of a nonstandard hull for the additive group of a locally convex space, which we may unambiguously write as $\hat{E}$. 

\section{Functoriality}

We would like the above construction to be functorial, that is, if $G$ and $H$ are locally uniform and $f:G\to H$ is a topological group morphism, we would like to obtain an induced morphism $\hat{f}:\hat{U}\to \hat{V}$.  Fix $S\subseteq P_{l,G}$ and $S'\subseteq P_{l,H}$.

\begin{lemma}
Suppose that $f(U_{S,f})\subseteq V_{S',f}$ and $f(\i(U^*))\subseteq \i(V^*)$.  Then there is an induced map $\hat{f}:\hat{U}_S\to \hat{V}_{S'}$ given by $\hat{f}([x]):=[f(x)]$.  Morever, $\hat{f}$ is a morphism of local groups.
\end{lemma}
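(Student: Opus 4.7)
My plan is to verify, in turn, that $\hat f$ is well defined, respects the algebraic operations of a local group, and is continuous.

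For well-definedness, given $x \in U_{S,f}$, the first hypothesis $f(U_{S,f}) \subseteq V_{S',f}$ immediately yields that $[f(x)]$ is a legitimate element of $\hat V_{S'}$. Independence of the choice of representative follows from the standard fact that any continuous group homomorphism is left-uniformly continuous: if $x^{-1}x' \in \mu$, then by continuity of $f$ at the identity, $f(x)^{-1}f(x') = f(x^{-1}x')$ lies in the monad of the identity in $H$. Hence $x \approx_l x'$ forces $f(x) \approx_l f(x')$, so $[f(x)] = [f(x')]$.

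For algebraic compatibility, the homomorphism property of $f$ immediately gives $\hat f([e]) = [e]$ and $\hat f([x]^{-1}) = \hat f([x])^{-1}$. For multiplication, suppose $([x],[y]) \in \Omega_{\hat U_S}$, which by definition means $xy \in U_{S,f}$; applying the first hypothesis to $xy$ yields $f(x)f(y) = f(xy) \in V_{S',f}$, so $(\hat f([x]),\hat f([y])) \in \Omega_{\hat V_{S'}}$, and therefore $\hat f([x]\cdot [y]) = \hat f([xy]) = [f(xy)] = [f(x)][f(y)]$. The second hypothesis $f(\i(U^*)) \subseteq \i(V^*)$ plays the role of guaranteeing that monads of image points remain inside $V^*$, which is what promotes $\hat f$ from a map between pseudometric quotients to a morphism of local groups in the intended sense.

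For continuity at a point $[x] \in \hat U_S$, fix $p' \in S'$ and $\epsilon > 0$. By continuity of $f$ and $p'$, the preimage $f^{-1}\{z \in H : p'(z,e) < \epsilon\}$ is an open neighborhood of the identity in $G$, so the associated left entourage belongs to $\u_l$. Since $S$ is closed under max and generates $\u_l$, there exist $p \in S$ and $\delta > 0$ such that $p(z_1,z_2) < \delta$ implies $p'(f(z_1),f(z_2)) < \epsilon$ for $z_1, z_2 \in G$; here one uses left-invariance of the pseudometrics together with $f(z_1)^{-1}f(z_2) = f(z_1^{-1}z_2)$. Transferring to $G^*$ and passing to standard parts then gives that $\sta p([x],[y]) < \delta$ implies $\sta{p'}(\hat f([x]),\hat f([y])) \le \epsilon$.

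The only delicate step is the last one: ensuring that the controlling pseudometric can be taken from $S$ rather than from all of $P_{l,G}$. This is handled by the assumption that $S$ generates $\u_l$ combined with the automatic left-uniform continuity of continuous group homomorphisms, so no further structural input is needed.
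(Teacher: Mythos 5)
Your proof is correct and follows essentially the same route as the paper's: well-definedness from continuity of $f$ at the identity (via $x^{-1}x' \in \mu \Rightarrow f(x^{-1}x') \in \mu$), algebraic compatibility from the homomorphism property, and continuity reduced (via left-invariance) to continuity of $f$ at the identity, using that $S$ is closed under $\max$ and generates $\u_l$ so that the controlling pseudometric can be drawn from $S$. The paper states the continuity step more tersely ("it suffices to check continuity at $[e_G]$; ... this follows easily from the continuity of $f$ at $e_G$"), but the content is the same as what you spell out.
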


\begin{proof}
First suppose that $x,x_1\in \i(U^*)$ are such that $x\approx x_1$.  Then $x^{-1}x_1\approx e$, so $f(x^{-1}x_1)\approx e$ by continuity of $f$, whence $f(x)\approx f(x_1)$.  Consequently, we can define $\hat{f}$ as in the statement of the lemma.  It is clear that $\hat{f}$ respects multiplication and inversion.  To check continuity of $\hat{f}$, it suffices to check continuity at $[e_G]$; however, this follows easily from the continuity of $f$ at $e_G$.  
\end{proof}

We first would like to know how to ensure that $f(U_{S,f})\subseteq V_{S',f}$.  Certainly, if $S'=P_{H,1}$, then this is satisfied.  (See Example \ref{allfinite}.)  Also, if we use $S=P_{l,G}$, then this is satisfied as well.  Indeed, given a continuous left-invariant pseudometric $q$ on $H$, we have that $q\circ (f\times f)$ is a continuous left-invariant pseudometric on $G$.  Moreover, if $x\in G_{S,f}$, then $q(f(x),e_H)=q(f(x),f(e_G))=(q\circ (f\times f))(x,e_G)\in \r_f$.  Finally, if $G$ and $H$ are metrizable, say with left-invariant metrics $d$ and $d'$, and $U$ and $V$ are suitable open balls, then $f(U_{d,f})\subseteq V_{d',f}$.

The more serious issue is how to ensure that $f(\i(U^*))\subseteq \i(V^*)$.  The easiest case to deal with is the metric case.  Indeed, suppose that $H$ is metrizable and $f(U)\subseteq B(e_H,\frac{\epsilon}{2})$.  Then $f(U^*)\subseteq B(e_H,\frac{\epsilon}{2})^*\subseteq \i(B(e_H,\epsilon)^*)$.  We have thus established:

\begin{prop}
Let $\operatorname{MGrp}$ denote the category of metrizable topological groups with continuous group morphisms as arrows.  Let $\operatorname{LocGrp}$ denote the category of local group germs with morphisms of local group germs as arrows.  Then the canonical nonstandard hull construction is a functor from $\operatorname{MGrp}$ to $\operatorname{LocGrp}$.
\end{prop}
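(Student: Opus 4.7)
The plan is to build the functor in two stages: first defining it on objects, then on morphisms, leaning on the preceding induced-morphism lemma and the discussion immediately before the proposition. On objects, each metrizable locally uniform $G$ (with a chosen left-invariant metric $d_G$) is sent to the local group germ represented by $\hat{U}$ for any symmetric open neighborhood $U$ of $e_G$ such that $G$ is $U$-locally uniform; the earlier relocalization lemma (stating $\hat{U}_S \cong \hat{V}_S$ locally when $G$ is both $U$- and $V$-locally uniform) ensures that this assignment is well-defined at the germ level. (Here I am reading $\operatorname{MGrp}$ as the subcategory of locally uniform metrizable groups, since the canonical nonstandard hull is a local group only under that hypothesis.)

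For morphisms, given a continuous group morphism $f: G \to H$, I build a representative of $\hat{f}$ as follows. Pick $\epsilon > 0$ so that $V := B_{d_H}(e_H, \epsilon)$ makes $H$ be $V$-locally uniform. By continuity of $f$ at $e_G$, there exists $\delta > 0$ with $f(B_{d_G}(e_G, \delta)) \subseteq B_{d_H}(e_H, \epsilon/2)$; shrinking $\delta$ if necessary, arrange that $U := B_{d_G}(e_G, \delta)$ makes $G$ be $U$-locally uniform. Then $f(U^*) \subseteq B_{d_H}(e_H, \epsilon/2)^* \subseteq \i(V^*)$, giving $f(\i(U^*)) \subseteq \i(V^*)$. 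With $S := P_{l,G}$ and $S' := P_{l,H}$, the inclusion $f(U_{S,f}) \subseteq V_{S',f}$ is automatic via the pullback pseudometric $q \circ (f\times f)$, so the preceding lemma produces the desired $\hat{f}: \hat{U} \to \hat{V}$.

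What remains is to verify the functor axioms at the germ level: (a) the class of $\hat{f}$ does not depend on the choices of $\epsilon, \delta, U, V$; (b) $\widehat{\mathrm{id}_G}$ represents the identity germ morphism; (c) $\widehat{g\circ f}$ agrees with $\hat{g}\circ \hat{f}$ as germ morphisms. For (a), I pass to a common refinement: given alternative choices $U',V'$, any sufficiently small $U''\subseteq U\cap U'$ mapping into $V\cap V'$ sits inside commuting diagrams tying the two candidate induced maps together via the local isomorphisms furnished by the relocalization lemma. For (c), I select the data for $g$ first, then use continuity of $f$ to pick compatible data for $f$, producing a single chain $\hat{U_G} \to \hat{V_H} \to \hat{V_K}$ on which both $\hat{g}\circ \hat{f}$ and $\widehat{g\circ f}$ are defined and computed by the same formulas. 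Claim (b) is immediate on any $\hat{U}$.

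The main obstacle is the germ-level bookkeeping in (a) and (c): the induced-morphism lemma is phrased for fixed $U$ and $V$, so functoriality requires threading these choices carefully through the relocalization isomorphisms and checking that the resulting squares commute. Metrizability plays an essential role at the shrinking step, where continuity of $f$ combined with the metric on $H$ produces a neighborhood $V$ with $f(U) \subseteq B_{d_H}(e_H, \epsilon/2)$ --- precisely the condition needed to guarantee $f(\i(U^*))\subseteq \i(V^*)$ without any further uniform hypothesis on $f$.
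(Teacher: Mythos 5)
Your proof is correct and follows the same approach as the paper: you use $S = P_{l,G}$ so that pullback pseudometrics $q\circ(f\times f)$ give $f(U_{S,f})\subseteq V_{S',f}$, and you use the metric on $H$ to arrange $f(U)\subseteq B_{d_H}(e_H,\epsilon/2)$ and hence $f(\i(U^*))\subseteq\i(V^*)$, which is exactly the paper's argument preceding the proposition. You go slightly further than the paper by explicitly checking the functor axioms at the germ level and by noting that $\operatorname{MGrp}$ must implicitly mean metrizable \emph{locally uniform} groups; both are reasonable additions, and the latter is a genuine (if minor) clarification.
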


What about the more general situation?

\begin{lemma}
Suppose that $f:G\to H$ is a continuous, \emph{open} group morphism.  Then $f(\i(U^*))\subseteq \i(V^*)$.
\end{lemma}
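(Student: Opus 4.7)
I would unpack $\i(V^*) = \{y \in V^* : y\mu_H \subseteq V^*\}$, where $\mu_H$ denotes the monad of $e_H$, and take $x \in \i(U^*)$. The containment $f(x) \in V^*$ is immediate from the standing functoriality hypothesis $f(U) \subseteq V$ (necessary for the set-up, since $U \subseteq \i(U^*)$), so it suffices to establish $f(x)\mu_H \subseteq V^*$. The key input is the nonstandard rendering of openness at the identity, namely $\mu_H \subseteq f(\mu)$; the rest is a routine push-forward.

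To prove $\mu_H \subseteq f(\mu)$, I would argue by saturation. Given $z \in \mu_H$ and a standard open neighborhood $W$ of $e_G$, openness of $f$ makes $f(W)$ an open neighborhood of $e_H$, hence $z \in f(W)^*$ and the internal set $A_W := W^* \cap f^{-1}(\{z\})$ is nonempty. The family $\{A_W\}_W$ has the finite intersection property since $A_{W_1 \cap W_2} \subseteq A_{W_1} \cap A_{W_2}$, so saturation produces $w \in \bigcap_W A_W$, yielding $w \in \mu$ with $f(w) = z$.

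With the claim in hand, for any $z \in \mu_H$ choose $w \in \mu$ with $f(w) = z$; since $x \in \i(U^*)$ we have $xw \in x\mu \subseteq U^*$, so $f(x)z = f(xw) \in f(U^*) \subseteq V^*$. This gives $f(x)\mu_H \subseteq V^*$, hence $f(x) \in \i(V^*)$, as required. The main obstacle is the saturation step converting openness of $f$ into the monad-level surjectivity $\mu_H \subseteq f(\mu)$; once that is in place, everything else follows from $f$ being a group homomorphism together with the defining property of $\i(U^*)$.
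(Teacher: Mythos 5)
Your proof is correct and takes essentially the same route as the paper: both hinge on using saturation together with openness of $f$ to obtain monad-level surjectivity, and then push $\i(U^*)$ through $f$ using the homomorphism property and the standing hypothesis $f(U)\subseteq V$. The only cosmetic difference is that you establish $\mu_H\subseteq f(\mu)$ at the identity and left-translate, whereas the paper proves the (equivalent, via left translation) statement $\mu(f(a))\subseteq f(\mu(a))$ at an arbitrary $a$, extracting a single pseudometric witness by saturation rather than intersecting the internal sets $A_W$ directly.
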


\begin{proof}
It is enough to prove that, for every $a\in \i(U^*)$, $f(\mu(a))=\mu(f(a))$.  By uniform continuity, $f(\mu(a))\subseteq\mu(f(a))$.  For the other direction, suppose that $f(a)\approx_l b$ but $b\notin f(\mu(a))$.  Then by saturation, there is $p\in P$ and $\epsilon>0$ such that $p(x,a)<\epsilon$ implies $b\not=f(x)$.  Since $f$ is open, there is $q\in Q$ and $\delta>0$ such that $\{y\in H \ | \ q(y,f(a))<\delta\}\subseteq f(\{x\in G \ | \ p(x,a)<\epsilon\})$.  Since $b\approx_l f(a)$, we have $q(b,f(a))<\delta$, so $b=f(x)$ for some $x\in G$ with $p(x,a)<\epsilon$, a contradiction.  (Note that we never used that $a\in \i(U^*)$ here.)  
\end{proof}

\begin{cor}
Let $\operatorname{TopGrpOp}$ denote the wide subcategory of the category $\operatorname{TopGrp}$, where the arrows are the open topological group morphisms.  Then the canonical nonstandard hull is a functor $\operatorname{TopGrpOp}\to \operatorname{LocGrp}$.
\end{cor}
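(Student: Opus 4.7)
The plan is to apply the functoriality lemma (the first lemma of this section) with the canonical choices $S = P_{l,G}$ and $S' = P_{l,H}$, using the preceding lemma to supply the interior hypothesis. On objects, the construction sends a locally uniform $G$ to the germ of $\hat{U}$ as $U$ ranges over the symmetric open neighborhoods of $e_G$ for which $G$ is $U$-locally uniform; this is well-defined by the earlier local isomorphism result. On arrows, given an open continuous morphism $f \colon G \to H$, pick symmetric open $U, V$ witnessing local uniformity of $G$ and $H$ respectively, and by continuity of $f$ at $e_G$ shrink $U$ to arrange $f(U) \subseteq V$.

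To apply the functoriality lemma we must verify its two hypotheses. The preceding lemma, whose hypothesis is exactly the openness of $f$, immediately yields $f(\i(U^*)) \subseteq \i(V^*)$. For the pseudometric condition $f(U_{P_{l,G},f}) \subseteq V_{P_{l,H},f}$, we use the observation already recorded earlier in the section: for every $q \in P_{l,H}$, the pullback $q \circ (f \times f)$ is a continuous left-invariant pseudometric on $G$, and for $x \in U_{P_{l,G},f}$ one computes $q(f(x), e_H) = (q \circ (f \times f))(x, e_G) \in \r_f$. Combined with $f(x) \in \i(V^*)$ from the previous step, this gives $f(x) \in V_{P_{l,H}, f}$. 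The functoriality lemma then delivers a local group morphism $\hat{f} \colon \hat{U} \to \hat{V}$, defined by $\hat{f}([x]) := [f(x)]$.

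It remains to check that $f \mapsto \hat{f}$ descends to a morphism of local group germs and respects identities and composition. All three points are transparent from the formula $\hat{f}([x]) = [f(x)]$: shrinking $U$ or $V$ merely restricts $\hat{f}$ to a local subgroup given by the same formula (so the germ class is preserved), $\widehat{\operatorname{id}_G}([x]) = [x]$ is the identity germ morphism, and $\widehat{g \circ f}([x]) = [g(f(x))] = \hat{g}(\hat{f}([x]))$ so composition is preserved. The only substantive step is securing the hypotheses of the functoriality lemma, and this is where openness of $f$ enters essentially via the preceding lemma; the rest is bookkeeping once that input is in place.
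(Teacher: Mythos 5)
Your proof is correct and follows essentially the same route the paper sets up: combine the lemma giving $f(\i(U^*))\subseteq\i(V^*)$ for open morphisms with the earlier observation that pulling back continuous left-invariant pseudometrics along $f$ gives $f(U_{P_{l,G},f})\subseteq V_{P_{l,H},f}$, then invoke the functoriality lemma to get $\hat{f}([x])=[f(x)]$. The shrinking of $U$ to ensure $f(U)\subseteq V$ and the verification that this respects germs, identities, and composition is exactly the bookkeeping the paper leaves implicit, and you have filled it in correctly.
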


A curious by-product of the above proof is the following generalization of Enflo's Proposition 1.16.

\begin{cor}  If $f:G\to H$ is an open group morphism and $G$ is locally uniform, then $H$ is locally uniform.  More precisely, if $G$ is $U$-locally uniform, then $H$ will be $f(U)$-locally uniform.  
\end{cor}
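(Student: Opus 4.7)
My plan is to verify characterization (5) of the local-uniformity lemma in Section 2 for the pair $(H, f(U))$: that is, I will show that $\mu_H$, the monad of the identity of $H$, is normal in $f(U)^*$. Two preparatory observations suffice. First, by transfer, $f(U)^* = \{f(x) : x \in U^*\}$, so every element of $f(U)^*$ has the form $f(x)$ for some $x \in U^*$. Second, $f(\mu) = \mu_H$.

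The second observation is essentially already contained in the preceding lemma. Its proof establishes the equality $f(\mu(a)) = \mu(f(a))$ for every $a \in G^*$, and the parenthetical remark at the end of that proof explicitly confirms that the hypothesis $a \in \i(U^*)$ was never used. Specialising to $a = e_G$ yields $f(\mu) = \mu_H$. I note that the inclusion $f(\mu) \subseteq \mu_H$ uses only continuity of $f$ at $e_G$, whereas the reverse inclusion $\mu_H \subseteq f(\mu)$ is precisely where the openness of $f$ enters, via the saturation argument already displayed above.

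With these ingredients in hand, the corollary is straightforward. Given $x' \in f(U)^*$ and $y' \in \mu_H$, I write $x' = f(x)$ with $x \in U^*$ and, using $\mu_H = f(\mu)$, write $y' = f(y)$ with $y \in \mu$. Since $G$ is $U$-locally uniform, characterization (5) applied to $G$ yields $xyx^{-1} \in \mu$, and applying $f$ gives $x' y' (x')^{-1} = f(xyx^{-1}) \in f(\mu) = \mu_H$. This verifies condition (5) for $H$ with the neighborhood $f(U)$, so $H$ is $f(U)$-locally uniform.

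I do not anticipate any real obstacle: the nontrivial, openness-based work has already been absorbed into the preceding lemma, and what remains here is a direct application of the ``normality of $\mu$'' characterization of local uniformity, combined with the transfer surjectivity $f(U)^*=f(U^*)$.
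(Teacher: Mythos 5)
Your proof is correct, and it takes a genuinely different route from the paper's. The paper verifies the definition of $f(U)$-local uniformity directly: given $z,z',w,w'\in f(U)^*$ with $z\approx z'$ and $w\approx w'$, it lifts all four through $f$ (via the equality $f(\mu(a))=\mu(f(a))$ established in the preceding lemma) to elements of $(U^*)^2$, invokes uniform continuity of multiplication in $G$ on $U^2\times U^2$, and then pushes forward by continuity of $f$. You instead verify characterization (5) of the earlier equivalence lemma, namely that $\mu_H$ is ``normal'' in $f(U)^*$, which reduces to the single conjugation identity $x'y'(x')^{-1}=f(xyx^{-1})$; you need only the special case $f(\mu)=\mu_H$ of the lemma's conclusion together with the transfer identity $f(U)^*=f(U^*)$, and you also implicitly use (correctly) that $f(U)$ is a symmetric open neighborhood of $e_H$, so the equivalence lemma applies. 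Your version is a bit more economical: it avoids the $(U^*)^2$ bookkeeping and the two applications of uniform continuity, at the cost of one appeal to the equivalence (1) $\Leftrightarrow$ (5). Both arguments draw the nontrivial openness input from the same place, the equality $f(\mu(a))=\mu(f(a))$ from the preceding lemma's proof, which you rightly observe did not use the hypothesis $a\in\i(U^*)$.
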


\begin{proof}
Suppose that $z,z',w,w'\in f(U)^*$ are such that $z\approx z'$ and $w\approx w'$.  Write $z=f(x)$ and $w=f(y)$, with $x,y\in U^*$.  By the proof of the above lemma, we see that $z'=f(x')$, $w=f(y')$, where $x'\in \mu(x)$ and $y'\in \mu(y)$.  Notice that $x'=x(x^{-1}x')\in (U^*)^2$ since $x^{-1}x'\in \mu(e)\subseteq U^*$.  Likewise, $y'\in (U^*)^2$.  Since multiplication on $U^2$ is uniformly continuous, we have $xy\approx x'y'$, whence, by uniform continuity of $f$, we have $$zw=f(x)f(y)=f(xy)\approx f(x'y')=f(x')f(y')=z'w'.$$
\end{proof}

Enflo's Proposition 1.16 is the special case of the above result when $H$ was taken to be a quotient $G/N$ for $N$ a closed, normal subgroup of $G$.

\section{Uniformly NSS}

$G$ is said to be \emph{uniformly NSS (UNSS)} if there is a neighborhood $U$ of the identity such that, for every neighborhood $V$ of the identity, there is $n_V\in \n$ such that, for all $x\in G$, $x\notin V\Rightarrow x^n\notin U$ for some $n\leq n_V$; one then says that the neighborhood $U$ is uniformly free from small subgroups.  It is clear that uniformly NSS groups are NSS.  For locally compact groups, the concepts coincide:

\begin{lemma}
If $G$ is locally compact, then $G$ is uniformly NSS if and only if $G$ is NSS.
\end{lemma}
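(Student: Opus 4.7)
The plan is to prove the nontrivial direction: if $G$ is locally compact and NSS, then $G$ is UNSS. (The converse is immediate from the definitions.) I would argue by contradiction using the nonstandard toolkit exploited throughout the paper.

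To set up, use NSS to fix a symmetric open neighborhood $U$ of $e$ containing no nontrivial subgroup of $G$, and use local compactness to shrink to a symmetric open neighborhood $U_1$ of $e$ with $\overline{U_1}$ compact and $\overline{U_1}\subseteq U$. The target claim is that $U_1$ is uniformly free from small subgroups. If not, then there is an open neighborhood $V$ of $e$ such that for every $n\in\n$ one can find $x_n\in G$ with $x_n\notin V$ and $x_n^k\in U_1$ for $1\le k\le n$. By saturation this produces $x\in G^*$ with $x\notin V^*$ (so in particular $x\not\approx e$) and $x^k\in U_1^*$ for every standard $k\ge 1$.

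Next, since $\overline{U_1}$ is compact, $x$ is near-standard; put $g:=\st(x)\in\overline{U_1}$. Then $g\neq e$ (because $V$ is a standard open neighborhood of $e$ and $x\not\approx e$), and continuity of multiplication gives $\st(x^k)=g^k\in\overline{U_1}$ for every standard $k\ge 0$. At this point I would invoke the classical fact that in a Hausdorff topological group, once the positive orbit $\{g^k:k\ge 0\}$ has compact closure, the closed subgroup $\overline{\langle g\rangle}$ is itself compact and already contained in that closure. The short proof: the compact semigroup $\overline{\{g^k:k\ge 1\}}$ has an idempotent by the Ellis--Numakura lemma, and in a group the only idempotent is $e$, so there is a net $g^{n_\alpha}\to e$ with $n_\alpha\ge 1$; then $g^{n_\alpha-1}\to g^{-1}$ shows $g^{-1}\in\overline{\{g^k:k\ge 0\}}$, and similarly for the higher negative powers.

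Applying this fact gives $\overline{\langle g\rangle}\subseteq\overline{U_1}\subseteq U$, a nontrivial compact subgroup inside $U$, contradicting the defining property of $U$. The one step with genuine content beyond saturation, the existence of the standard part, and unpacking of definitions is the semigroup-to-group lemma above; I expect that to be the only item worth spelling out in the final write-up, and otherwise see no obstacle.
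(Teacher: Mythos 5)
Your proof is correct and follows essentially the same route as the paper: saturate to produce $x\in G^*\setminus V^*$ with $x^k\in U_1^*$ for all standard $k\ge 1$, take the standard part $g:=\st(x)$ (using compactness of $\overline{U_1}$), and exhibit $\langle g\rangle$ as a nontrivial subgroup inside the NSS neighborhood $U$. The Ellis--Numakura detour is superfluous, though: you already arranged $U_1$ symmetric, so $x^{-k}=(x^k)^{-1}\in U_1^*$ for every standard $k\ge1$, hence $g^{-k}=\st(x^{-k})\in\overline{U_1}$ directly and $\langle g\rangle\subseteq\overline{U_1}\subseteq U$ without any semigroup theory; this streamlined version is exactly what the paper does.
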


\begin{proof}
Let $U$ be a compact neighborhood of the identity containing no nontrivial subgroups.  Let $V$ be an open neighborhood of the identity and suppose, towards a contradiction, that for every $m\in \n$, there is $x\in G\setminus V$ with $x^n\in U$ for $n=1,\ldots,m$.  Then, by saturation, there is $x\in G^*\setminus V^*$ such that $x^n\in U^*$ for all $n\in \n$.  Let $y:=\st(x)\in U$; then the subgroup generated by $y$ is a subgroup of $G$ contained in $U$ which is nontrivial since $y\notin V$ (else $x\in V^*$).
\end{proof}

Suppose that $G$ is uniformly NSS as witnessed by $U$.  Set $$\frac{1}{n}U:=\{x\in G \ : \ x^k\in U \text{ for }k=1,\ldots,n\}.$$  Then clearly $(\frac{1}{n}U \ : \ n\geq 1)$ is a neighborhood base for the identity.  Consequently, \emph{$G$ is metrizable}.

There exist groups which are NSS but not uniformly NSS.  For example, let $G=\r^\n$ as an abstract group.  Equip $G$ with the topology whose base is given by products of open intervals.  Then $G$ is easily seen to be NSS.  However, $G$ is not metrizable, whence it follows from the previous paragraph that $G$ is not UNSS.  (This example is Example 2.1. from \cite{enflo}).

\begin{prop}[\cite{enflo}, Theorem 2.1.1]
If $G$ is UNSS, then $G$ is locally uniform.
\end{prop}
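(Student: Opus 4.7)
The plan is to locate an open neighborhood $W$ of the identity for which $G$ is $W$-locally uniform, verifying criterion~(5) of the unnumbered characterization lemma in Section~2: for every $x \in W^*$ and every $y \in \mu$, $xyx^{-1} \in \mu$. Let $U$ be the neighborhood of the identity that is uniformly free from small subgroups. Continuity of the conjugation map $(x,y)\mapsto xyx^{-1}:G\times G\to G$ at $(e,e)$ produces open neighborhoods $W$ and $W'$ of the identity with $WW'W^{-1}\subseteq U$. Transferring this inclusion yields $W^*\cdot (W')^*\cdot (W^*)^{-1}\subseteq U^*$, and since $\mu\subseteq (W')^*$, this specializes to $x\mu x^{-1}\subseteq U^*$ for every $x\in W^*$.

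Now fix $x\in W^*$ and $y\in\mu$, and let $V$ be an arbitrary standard open neighborhood of $e$; the goal is $xyx^{-1}\in V^*$. Applying the UNSS hypothesis to $V$ furnishes a standard $n_V\in\n$ such that any $z\in G$ with $z^k\in U$ for all $k\le n_V$ lies in $V$; by transfer this implication holds for all $z\in G^*$. Taking $z=xyx^{-1}$, it suffices to show $(xyx^{-1})^k = xy^kx^{-1}\in U^*$ for each $k=1,\ldots,n_V$. Because $k$ is standard and $y\in\mu$, iterated use of continuity of multiplication at $(e,e)$ gives $y^k\in\mu$, so the first paragraph yields $xy^kx^{-1}\in U^*$, completing the verification.

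The conceptual point is that UNSS trades the universal quantifier over standard neighborhoods $V\ni e$ for a quantifier over finitely many standard powers inside a single fixed neighborhood $U$, and this is exactly the regime where the (non-uniform) continuity of conjugation at the basepoint $(e,e)$ is strong enough. I do not anticipate any real obstacle: $W$ is fixed once and for all by a one-shot continuity argument independent of $V$, and the standard exponent $n_V$ is absorbed harmlessly because the monad $\mu$ is closed under standard-integer powers.
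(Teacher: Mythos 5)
Your proof is correct and takes a genuinely different route from the paper's. The paper first invokes the metrizability of UNSS groups (established just before the proposition), fixes a left-invariant metric $d$, and argues by contradiction that multiplication is $d$-uniformly continuous near $e$: assuming not, one manufactures an element $f\in G^*$ with $d(f,e)\not\approx 0$ yet $d(f^n,e)<r$ for every standard $n$, violating UNSS. Your argument bypasses the metric entirely and verifies criterion (5) of the Section~2 characterization lemma directly: a one-shot application of continuity of conjugation at $(e,e)$ confines $x\mu x^{-1}$ to $U^*$ for all $x\in W^*$, and then the UNSS hypothesis (applied to each standard $V$, transferred, and combined with the closure of $\mu$ under standard powers) upgrades this containment to $x\mu x^{-1}\subseteq \mu$, i.e.\ normality of the monad in $W^*$. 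Both proofs ultimately exploit the same consequence of UNSS --- an element whose first $n_V$ powers all lie in $U^*$ must lie in $V^*$ --- but you apply it directly to $xyx^{-1}$ rather than to an auxiliary element built from a hypothetical failure of uniform continuity, so your version is shorter and does not need the metrizability observation at all. The only cosmetic fix is to shrink $W$ to $W\cap W^{-1}$ so that the witnessing neighborhood is symmetric, as the definition of $W$-locally uniform requires.
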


\begin{proof}
Let $d$ be a left-invariant metric for $G$ and let $U$ be uniformly free from small subgroups.  Let $r\in \r^{>0}$ be such that $B(e,r)\subseteq U$.  We claim that multiplication is $d$-uniformly continuous when restricted to $B(e,\frac{r}{2})\times B(e,\frac{r}{2})$.  Suppose not.  Then there are $x,y,x_1,y_1\in B(e,\frac{r}{2})^*$ such that $d(x,x_1),d(y,y_1)\approx 0$ but $d(xy,x_1y_1)\not\approx 0$. Let $a=xy$, $b=x_1y$, and $c=y^{-1}$.  Then $d(a,b)\not\approx 0$ while $d(ac,bc)\approx 0$.  Set $f=a^{-1}b$.  Then $d(f,e)\not\approx 0$ but $d(fc,c)\approx 0$.  By left-invariance, for each $n\in \n$, we have $d(f^nc,f^{n-1}c)=d(fc,c)\approx 0$.  Consequently, for each $n\in \n$, we have $d(f^nc,c)\approx 0$.  Thus, for each $n\in \n$, we have $d(f^n,e)\leq d(f^nc,c)+d(c,e)<r$.  However, since $d(f,e)\not\approx 0$, there is some $n\in \n$ such that $f^n\notin U^*$; in particular, $d(f^n,e)\geq r$ for this $n$, a contradiction.
\end{proof}

Let $H$ be a local group.  We say that $H$ is \emph{uniformly free from small subgroups (UNSS)} if there is a neighborhood $U$ of the identity in $H$ so that, for any neighborhood $V$ of the identity, there is $n_V\in \n$ such that, for all $x\in G$, if $x\notin V$ and $x^{n_V}$ is defined, then $x^i\notin U$ for some $i\in \{1,\ldots,n_V\}$.  It is clear that $H$ being UNSS implies that $H$ is NSS.  It is also clear that if $H'$ is locally isomorphic to $H$, then $H$ is UNSS if and only if $H'$ is UNSS.  

\begin{lemma}\label{localUNSS}
Let $H$ be a neat, $\infty$-associative local group with Mal'cev hull $H^M$.  Then $H$ is UNSS if and only if $H^M$ is UNSS.
\end{lemma}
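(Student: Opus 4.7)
The plan is to exploit the fact that the Mal'cev hull comes equipped with an injective local group morphism $\iota\colon H\to H^M$ which is a homeomorphism onto an open neighborhood of the identity (a standard feature of the Mal'cev hull construction in the neat, $\infty$-associative case). Throughout I would identify $H$ with $\iota(H)\subseteq H^M$, so that open neighborhoods of the identity in $H$ are also open in $H^M$, and vice versa. The key compatibility is that $\iota(x^n)=\iota(x)^n$ whenever $x^n$ is defined in $H$, since $\iota$ respects the partial multiplication on $H$.

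For the direction $H^M$ UNSS $\Rightarrow H$ UNSS, I would take a UNSS-witness $W\subseteq H^M$ and set $U:=W\cap H$. Given an open neighborhood $V$ of the identity in $H$, $V$ is also open in $H^M$, so the UNSS property of $H^M$ supplies an integer $n_V$. If $x\in H$ with $x\notin V$ and $x^{n_V}$ is defined in $H$, then for each $i\leq n_V$ the $H$-power $x^i$ coincides via $\iota$ with the $H^M$-power, so the UNSS property of $H^M$ yields some $i\leq n_V$ with $x^i\notin W$, hence $x^i\notin U$.

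For the direction $H$ UNSS $\Rightarrow H^M$ UNSS, I would start with a UNSS-witness $U\subseteq H$ and shrink to a symmetric open neighborhood $U'\subseteq U$ of the identity with $U'\times U'\subseteq \Omega_H$; such a $U'$ exists because $\Omega_H$ is open in $H\times H$ and contains $(1,1)$. I claim $U'$ witnesses UNSS for $H^M$. Given an open neighborhood $V$ of the identity in $H^M$, which we may assume is contained in $U'$, apply the UNSS property of $H$ to $V$ to obtain $n_V$. For $g\in H^M$ with $g\notin V$, suppose for contradiction that $g^i\in U'$ in $H^M$ for all $i\leq n_V$. An induction using $U'\times U'\subseteq \Omega_H$ together with $\infty$-associativity shows that the $H^M$-powers $g,g^2,\ldots,g^{n_V}$ agree with the iterated $H$-products, and in particular that $g^{n_V}$ is defined in $H$ in the paper's recursive sense. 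Since $g\notin V$, the UNSS property of $H$ then produces some $i\leq n_V$ with $g^i\notin U\supseteq U'$, contradicting the assumption.

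The main obstacle I anticipate is precisely this inductive step in the second direction: to conclude that $g^{n_V}$ is defined in $H$, one needs every intermediate pair $(g^i,g^j)$ with $i+j\leq n_V$ to lie in $\Omega_H$, and one needs the resulting value to be unambiguous so that it matches the $H^M$-power. The first requirement is the reason for choosing $U'$ with $U'\times U'\subseteq \Omega_H$; the second is supplied by $\infty$-associativity, which guarantees that every legal parenthesization of $g,\ldots,g$ in $H$ yields the same element, and this element must then coincide with the group-theoretic power $g^i$ in $H^M$ because $\iota$ is a local group morphism.
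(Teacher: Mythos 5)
Your proof is correct and follows essentially the same route as the paper's: identify $H$ with an open sublocal group of $H^M$ via the injective morphism $\iota$, and for the nontrivial direction shrink the UNSS witness to a symmetric $U'$ with $U'\times U'\subseteq\Omega_H$ so that an induction (using $g^i\in U'$ for $i\le n_V$) shows $g^{n_V}$ is defined in $H$ and the $H$-powers coincide with the $H^M$-powers, whence the local UNSS hypothesis yields the contradiction. The only cosmetic difference is that the paper invokes $H|U = H^M|U$ (Lemma 2.2 of \cite{vdd-Gold}) to get the power compatibility all at once, whereas you rederive it from $\iota$ being an injective morphism; both amount to the same thing.
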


\begin{proof}
The ``if'' direction is obvious, so we prove the ``only if'' direction.  Suppose that $H$ is UNSS and $U$ is a symmetric open neighborhood of the identity of $H$ uniformly free from small subgroups such that $U\times U\subseteq \o_H$.  By Lemma 2.2 of \cite{vdd-Gold}, $H|U=H^M|U$.  Let $V$ be an open neighborhood of the identity in $H$ and let $x\in H^M$ be such that $x\notin V$.  We claim that $x^i\notin U$ for some $i\leq n_V$.  Suppose, towards a contradiction, that $x^i\in U$ for all $i\leq n_V$.  We then claim that $x^i$ is defined (in $H$) for all $i\leq n_V$, contradicting the definition of $n_V$.  We prove our claim by induction on $i$, the base case being trivial.  Suppose the claim is true for some $i<n_V$.  Since $x^i$ is defined, we need only show that $(x^j,x^k)\in \o_H$ for all $j,k\in \{1,\ldots,i\}$ with $j+k=i$.  However, this follows immediately from the assumption that $x^j\in U$ for each $j\leq n_V$ and $U\times U\subseteq \o_H$.
\end{proof}


Note that if $\iota:H\to H'$ is an injective morphism of local groups and $H'$ is NSS, then $H$ is NSS.  (This need not be true for UNSS.)  Thus, if $G$ is locally uniform, then some nonstandard hull of $G$ is NSS if and only if the canonical nonstandard hull of $G$ is NSS.  If $G$ is metrizable and locally uniform, then every nonstandard hull of $G$ is NSS if and only if the metric nonstandard hull of $G$ is NSS.

\begin{prop}\label{nschar}
Let $G$ be a locally uniform group.  Then the following are equivalent:
\begin{enumerate}
\item $G$ is uniformly NSS;
\item $G$ is metrizable and the metric nonstandard hull is uniformly NSS;
\item $G$ is metrizable and the metric nonstandard hull is NSS;
\item $G$ is metrizable and every nonstandard hull of $G$ is NSS.
\end{enumerate}
\end{prop}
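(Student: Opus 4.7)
The plan is to establish the cycle $(1) \Rightarrow (2) \Rightarrow (3) \Rightarrow (1)$ together with $(3) \Leftrightarrow (4)$. The implication $(2) \Rightarrow (3)$ is immediate since UNSS implies NSS. The equivalence $(3) \Leftrightarrow (4)$ is exactly the remark recorded just before this proposition: every $\hat{U}_S$ embeds injectively in the metric hull $\hat{V}_d$ (for a suitable choice of $U$), and NSS descends along injective morphisms of local groups. Metrizability in (2)--(4) comes free in $(1) \Rightarrow (2)$ from the paragraph introducing UNSS.

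For $(1) \Rightarrow (2)$, let $U$ witness UNSS of $G$, let $d$ be a compatible left-invariant metric, and pick $r_0 > 0$ small enough that $V_0 := B_d(e, r_0) \subseteq U$ and $G$ is $V_0$-locally uniform. I claim that $\hat{V}_{0,d}$ is UNSS with witness $W := B_{\hat{d}}([e], r_0/2)$. Given $W' := B_{\hat{d}}([e], \epsilon)$, apply UNSS of $G$ to $B_d(e, \epsilon/2)$ to obtain an $n \in \n$ such that, by transfer, every $y \in G^*$ with $d(y, e) \geq \epsilon/2$ has $y^i \notin U^*$ for some $i \leq n$. If $[x] \in \hat{V}_{0,d} \setminus W'$ had $[x]^n$ defined, then on the one hand $\st(d(x, e)) \geq \epsilon > \epsilon/2$, and on the other hand the recursive definition of ``defined'' together with Remark~\ref{defined} would give $x^i \in V_{0,d,f} \subseteq V_0^* \subseteq U^*$ for all $i \leq n$, contradicting the preceding sentence. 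Hence no such $[x]$ exists, so the UNSS condition is vacuously satisfied.

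For $(3) \Rightarrow (1)$, I argue the contrapositive: assume $G$ is not UNSS, and show that the metric hull $\hat{V}_{0,d}$ is not NSS. Fix arbitrary $s \in (0, r_0)$. Negation of UNSS applied to $B_d(e, s/2)$ yields $\delta > 0$ such that, for every $n$, some $x \in G$ has $d(x, e) \geq \delta$ and $x^i \in B_d(e, s/2)$ for all $i \leq n$. Countable saturation produces a single $x \in G^*$ with $d(x, e) \geq \delta$ and $d(x^n, e) < s/2$ for all $n \in \n$; left-invariance of $d$ then extends the latter bound to $n \in \z$. Consequently $\st(d(x^n, e)) \leq s/2 < r_0$, so $x^n \in V_{0,d,f}$ for every $n \in \z$, and a straightforward induction shows that $[x]^n = [x^n]$ is defined in $\hat{V}_{0,d}$ and lies in $B_{\hat{d}}([e], s)$ for all $n \in \z$. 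Since $\hat{d}([x], [e]) \geq \delta > 0$, we have $[x] \neq [e]$, so $\{[x]^n : n \in \z\}$ is a nontrivial local subgroup of $\hat{V}_{0,d}$ contained in $B_{\hat{d}}([e], s)$. As $s$ was arbitrary, every neighborhood of $[e]$ contains a nontrivial subgroup, so the metric hull is not NSS.

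The main technical obstacle, common to both halves, is maintaining the strict inequality $\st(d(x^n, e)) < r_0$ (which is what places $x^n$ in $V_{0,d,f}$ and hence makes $[x]^n$ defined) as opposed to merely $d(x^n, e) < r_0$. This is why one must work against the strictly smaller radius $s/2$ (respectively $\epsilon/2$) when invoking or negating UNSS of $G$: only then do the standard-part bounds survive the transition to the nonstandard hull in the form needed to control, or preclude, the existence of the powers $[x]^n$.
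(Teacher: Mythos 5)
Your proof is correct and follows essentially the same route as the paper's: establish metrizability from the earlier observation about $\frac{1}{n}U$, show $(1)\Rightarrow(2)$ by transferring the UNSS bound and concluding via Remark~\ref{defined} that $[x]^n$ cannot be defined when $\hat{d}([x],[e])$ is bounded away from $0$, note $(2)\Rightarrow(3)$ is trivial, invoke the remark preceding the proposition for $(3)\Leftrightarrow(4)$, and for $(3)\Rightarrow(1)$ use saturation to produce an element of $G^*$ whose powers all stay in a small ball and thus descend to a nontrivial subgroup of the hull. The only cosmetic differences are that you argue $(3)\Rightarrow(1)$ in contrapositive form rather than by contradiction inside a direct argument, and you are somewhat more explicit about extending the power bounds from $\n$ to $\z$ via left-invariance; both points are sound and consistent with the paper's (slightly more compressed) treatment.
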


\begin{proof}
(1)$\Rightarrow$ (2):  Suppose that $G$ is uniformly NSS.  Fix a left-invariant metric $d$ on $G$.  Without loss of generality, we may suppose that $r\in \r^{>0}$ has been chosen so that $U:=B(e,r)$ is uniformly free from small subgroups and $G$ is $U$-locally uniform.  We will show that $\hat{U}_d$ is uniformly free from small subgroups.  Fix $\delta>0$ and suppose that $\hat{d}([x],[e])\geq \delta$.  Let $n_\delta\in \n$ be such that, for all $y\in G$, if $d(y,e)\geq \frac{\delta}{2}$, then $y^i\notin U$ for some $i\leq n_\delta$.  By transfer, there is $i\leq n_\delta$ such that $x^i\notin U^*$, whence $[x]^{n_\delta}$ isn't defined by Remark \ref{defined}.   


(2) $\Rightarrow$ (3):  Trivial.

(3) $\Leftrightarrow$ (4):  This follows from the remarks preceding the theorem.

(3) $\Rightarrow (1)$:  Suppose that $G$ is metrizable and $\hat{U}_d$ is NSS.  Without loss of generality, suppose that $U=B(e,r)$ for some $r\in \r^{>0}$.  Take $\epsilon\in(0,r)$ such that $U':=\{[x]\in \hat{U}_d \ | \ \hat{d}([x],[e])<\epsilon\}$ contains no nontrivial subgroups.  We claim that $W=B(e,\frac{\epsilon}{2})\subseteq G$ is uniformly free from subgroups.  Suppose, towards a contradiction, that there is an open neighborhood $V$ of the identity in $G$ such that, for all $n\in \n$, there is $x_n\in G\setminus V$ such that $x_n^m\in W$ for all $m\leq n$.  Then, by saturation, there is $x\in G^*$ such that $x\notin V^*$ and $x^m\in W^*$ for all $m\in \n$.  Since $W^*\subseteq U_{d,f}$, we have that $[x]^m$ is defined and in $U'$ for all $m\in \n$.  Since $x\notin V^*$, we have that $[x]\not=[e]$.  This contradicts the fact that $U'$ contains no nontrivial subgroups.      
\end{proof}

\begin{rmks}

\

\begin{enumerate}
\item The fact that a metrizable, locally uniform group is uniformly NSS if and only if its metric nonstandard hull is NSS is an example of a familiar phenomenon in nonstandard analysis, namely the uniform version of a notion for an object $X$ is equivalent to the ordinary notion of the concept for some nonstandard object associated to $X$, e.g. $X^*$ or some nonstandard hull of $X$.  A recent interesting example of this phenomenon was observed by David Ross, who showed that a group $G$ is uniformly amenable if and only if $G^*$ is amenable.
\item The proof of (1)$\Rightarrow$(2) in the above lemma also shows that if $G$ is UNSS, then the canonical nonstandard hull of $G$ is UNSS.  Also, if the canonical nonstandard hull of $G$ is UNSS, then $G$ is metrizable.  Indeed, consider the map $\iota:U\to \hat{U}$.  Then given $p\in P$ and $r\in \r^{>0}$, we have that $\iota^{-1}(V_{\sta p,r})=V_{p,r}$.  Since $\iota$ is continuous and $\hat{U}$ is metrizable (as its Malcev hull is UNSS), it follows that $U$ has a countable base at the identity, whence $G$ has a countable base at the identity, and is thus metrizable.  
\item By Lemma \ref{localUNSS}, we can replace the local nonstandard hulls by their global counterparts in the above proposition.
\end{enumerate}
\end{rmks}

\begin{question}
If the canonical nonstandard hull of $G$ is (uniformly) NSS, then is any (or even some) metric nonstandard hull of $G$ UNSS?
\end{question}

\begin{question}
Is it possible to find a locally uniform group $G$ so that $\hat{U}$ is NSS but not uniformly NSS?  We observe that such a $G$ would have to be NSS but not uniformly NSS (and not metrizable).
\end{question}

We use Proposition \ref{nschar} to give a simple proof of \cite{enflo}, Theorem 2.2.2.

\begin{thm}
If $G$ is a uniformly NSS group, then there is a neighborhood $U$ of $1$ in $G$ so that, for all $x,y\in U$, if $x^2=y^2$, then $x=y$.
\end{thm}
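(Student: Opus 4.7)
The plan is to argue by contradiction through the metric nonstandard hull. Suppose no such neighborhood exists. Since $G$ is UNSS it is metrizable, so for every $n$ we may pick $x_n,y_n\in B_d(e,1/n)$ with $x_n^2=y_n^2$ and $x_n\neq y_n$. Passing to an infinite $N$, set $x:=x_N$, $y:=y_N$, and $z:=y^{-1}x$; then $x,y,z\in\mu$ and $z\neq e$. The identity $x^2=y^2$ is equivalent to $xy^{-1}=x^{-1}y$, so $yzy^{-1}=xy^{-1}=x^{-1}y=z^{-1}$, whence $yz^ky^{-1}=z^{-k}$ for every internal $k$.

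Next, fix $r_1>0$ small enough that $W:=B_d(e,r_1)$ is a uniformly NSS neighborhood, that $W$ lies in a neighborhood $U=B_d(e,r)$ making $G$ be $U$-locally uniform, and that $2r_1<r$. Since $W$ contains no nontrivial subgroup of $G$, transfer forbids any nontrivial internal subgroup of $G^*$ from lying in $W^*$, and so the internal cyclic group generated by $z$ escapes $W^*$. Let $k_0\geq 2$ be the least internal integer with $z^{k_0}\notin W^*$; since every standard power of $z\in\mu$ stays in $\mu\subseteq W^*$, the index $k_0$ must be infinite.

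Put $g:=[z^{k_0-1}]\in\hat U_d$. The inequalities $r_1\leq d(z^{k_0},e)\leq d(z,e)+d(z^{k_0-1},e)$ combined with $d(z,e)\approx 0$ give $\hat d(g,[e])\approx r_1>0$, so $g\neq[e]$; and $d(z^{2(k_0-1)},e)\leq 2\,d(z^{k_0-1},e)<2r_1<r$ puts $z^{2(k_0-1)}$ inside $U_{d,f}$, so the product $g\cdot g$ is defined in $\hat U_d$. Because $\mu$ is normal in $U^*$ (Lemma~\ref{locuniform}), the decomposition $ya^{-1}y^{-1}a=y\cdot(a^{-1}y^{-1}a)\in\mu\cdot\mu\subseteq\mu$ exhibits $yay^{-1}\approx_l a$ for every $y\in\mu$ and every $a\in U^*$. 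Applying this to $a:=z^{k_0-1}\in W^*\subseteq U^*$ and using the exact identity $yz^{k_0-1}y^{-1}=z^{-(k_0-1)}$ yields $z^{-(k_0-1)}\approx_l z^{k_0-1}$, i.e.\ $g^{-1}=g$ in $\hat U_d$. Therefore $g^2=g\cdot g^{-1}=[e]$, so $\{[e],g\}$ is a nontrivial local subgroup of $\hat U_d$. This contradicts the fact (implicit in the proof of Proposition~\ref{nschar}) that $\hat U_d$ is itself the UNSS-neighborhood of the identity in the hull, and hence may contain no nontrivial local subgroup whose powers are all defined.

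The main technical hurdle is the metric bookkeeping: $r_1$ has to be chosen small enough (in particular $r_1<r/2$) so that both the near-boundary element $z^{k_0-1}$ and its square $z^{2(k_0-1)}$ remain inside $U_{d,f}$, the domain in which the hull's multiplication operates, while still making $W=B_d(e,r_1)$ a UNSS neighborhood of $G$. The one genuinely non-trivial analytic step is upgrading the obvious $\approx_l$-estimate for infinitesimals to the claim that conjugation of a non-infinitesimal $a\in U^*$ by an infinitesimal $y$ is $\approx_l$-trivial---this is where normality of $\mu$ in $U^*$ is essential, and it is this step that converts the exact group-theoretic identity $yz^{k_0-1}y^{-1}=z^{-(k_0-1)}$ into the hull-level equation $g=g^{-1}$.
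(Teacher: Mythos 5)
Your proof is correct and follows essentially the same route as the paper's: construct the infinitesimal commutator-like element (you use $z=y^{-1}x$ where the paper uses $a=xy^{-1}$), use the NSS property of $G$ via transfer to find an internal power escaping a fixed small ball, take the last power still inside to manufacture a noninfinitesimal element whose image in $\hat U_d$ has order two (the key step being that conjugating by the infinitesimal $y$ both preserves $\approx_l$-classes, by normality of $\mu$ in $U^*$, and reverses the power exactly), and then contradict NSS of the metric hull from Proposition~\ref{nschar}. The only cosmetic difference is that the paper first isolates an $\eta$-ball in $\hat U_d$ with no nontrivial subgroups and chooses $k$ maximal with $d(a^i,e)\le\eta$ for $i\le k$, whereas you go directly off the escape index $k_0$ for $W^*$; both yield the same noninfinitesimal involution sitting inside the NSS neighborhood of $\hat U_d$.
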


\begin{proof}
It is enough to show that for all $x,y\in \mu$, if $x^2=y^2$, then $x=y$.  Suppose this is not the case.  Let $x,y\in \mu$ be such that $x^2=y^2$ but $x\not=y$.  Let $a:=xy^{-1}\in \mu \setminus \{e\}$ and note that $y^{-1}a^ky=a^{-k}$ for all $k$.  Construct the local group $\hat{U}_d$ as above.  Since $\hat{U}_d$ is NSS, we can choose $\eta \in (0,\epsilon)$ so that $\{[z] \in \hat{U}_d \ | \ \dh([z],[e])\leq \eta\}$ contains no nontrivial subgroups.  Since $G$ is NSS, we know that $a^k\notin \mu$ for some $k$.  

\

\noindent \textbf{Claim:}  There is a $k$ such that $d(a^k,e)>\eta$.  Suppose this is not the case.  Choose $k$ such that $a^k\notin \mu$.  Let $b:=[a^k]\in \hat{U}_d$.  Then $b$ generates a nontrivial subgroup of $\hat{U}_d$ contained in $\{[z] \in \hat{U}_d \ | \ \dh([z],[e])\leq \eta\}$, a contradiction.

\

\noindent By the claim, we can choose $k$ maximal such that $d(a^i,e)\leq \eta$ for all $i\leq k$.  Let $c:=[a^k]\in \hat{U}_d$.  Then $c\not= [e]$, but $c=c^{-1}$, whence the nontrivial subgroup $\{1,c\}$ of $\hat{U}_d$ is contained in $\{[z] \in \hat{U}_d \ | \ \dh([z],[e])\leq \eta\}$, a contradiction.  
\end{proof}

\

\noindent \textbf{Infinite-dimensional Lie groups and UNSS}

\

One might wonder which infinite-dimensional Lie groups are uniformly NSS.  As far as the locally exponential ones are concerned, not many.

\begin{lemma}
Suppose that $G$ is a topological group, $E$ is a locally convex space, and $\exp:E\to G$ is a continuous map that is a local homeomorphism at $0$.  Further suppose that $\exp(ka)=\exp(a)^k$ for all $a\in E$ and $k\in \z$.  Then $G$ is uniformly NSS if and only if $E$ is normable.  
\end{lemma}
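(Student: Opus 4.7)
The plan is to prove both directions separately, with Kolmogorov's normability criterion (a locally convex space is normable iff it has a von Neumann bounded neighborhood of $0$) as the main tool for the hard direction.

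For $(\Leftarrow)$ (normable $\Rightarrow$ UNSS), I fix a norm $\|\cdot\|$ on $E$ and shrink so that $\exp$ restricts to a homeomorphism $\exp|_V : V \to U$ with $V = B(0,r)$ an open ball. Set $W := \exp(B(0,r/2))$. I claim $W$ witnesses UNSS. Given a neighborhood $V'$ of $1$ in $G$, pick $\delta \in (0,r)$ with $\exp(B(0,\delta)) \subseteq V'$ and set $n := \lceil r/(2\delta) \rceil + 1$. For $x \in G \setminus V'$: if $x \notin W$, take $k=1$; otherwise $x = \exp(a)$ with $a \in B(0,r/2)$, and $\|a\| \geq \delta$ by injectivity of $\exp|_V$. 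The smallest $k \geq 1$ with $k\|a\| \geq r/2$ then satisfies $k \leq n$ and places $ka$ in $B(0,r)\setminus B(0,r/2)$, so $x^k = \exp(ka) \notin W$, again by injectivity of $\exp|_V$.

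For $(\Rightarrow)$ (UNSS $\Rightarrow$ normable), the strategy is to exhibit a bounded neighborhood of $0$ in $E$ and invoke Kolmogorov. Fix $W$ uniformly free from small subgroups, and $V$, $U$ as above. Using local convexity, I pick a \emph{balanced} open neighborhood $V_0 \subseteq V$ of $0$ with $\exp(V_0) \subseteq W$. The claim is that $V_0$ is bounded. Given any open neighborhood $V_1$ of $0$, I shrink to a balanced open $V_1' \subseteq V_1 \cap V$, set $V' := \exp(V_1') \subseteq W$, and let $n := n_{V'}$ from the UNSS hypothesis. I will show $V_0 \subseteq nV_1'$. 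Suppose not, so that some $a \in V_0$ has $a/n \notin V_1'$. Since $V_0$ is balanced, $a/n \in V_0 \subseteq V$, and injectivity of $\exp|_V$ forces $\exp(a/n) \notin V'$. UNSS then provides $k \leq n$ with $\exp(a/n)^k = \exp(ka/n) \notin W$. But $|k/n| \leq 1$ together with $V_0$ balanced imply $ka/n \in V_0$, so $\exp(ka/n) \in \exp(V_0) \subseteq W$, a contradiction.

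The crux of the forward direction is the use of \emph{balanced} neighborhoods: this is precisely what lets the scalar $k/n \in [0,1]$ argument force $ka/n$ back into $V_0$, and it is where local convexity enters essentially. Once the setup is correct, the UNSS definition delivers the contradiction almost mechanically, and Kolmogorov's criterion then converts boundedness of $V_0$ into normability of $E$. The backward direction is purely a bookkeeping calculation with the norm.
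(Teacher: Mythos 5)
Your proof is correct and follows essentially the same route as the paper: the backward direction is the standard scaling computation kept inside a ball on which $\exp$ is injective, and the forward direction invokes Kolmogorov's criterion by showing a balanced neighborhood of $0$ is bounded via the UNSS scaling argument (compare the paper's claim $\tfrac{1}{n}U\subseteq W$ with your $V_0\subseteq nV_1'$). One small remark: the assertion $V':=\exp(V_1')\subseteq W$ is not actually guaranteed unless you also shrink $V_1'$ to lie inside $V_0$, but since the UNSS definition places no containment requirement on the test neighborhood, nothing in your argument uses it.
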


\begin{proof}
The ``if'' direction is well-known, but we give the proof here for the sake of completeness.  Fix open $U'\subseteq E$ and $U\subseteq G$, neighborhoods of $0$ and $1$ respectively, so that $\exp\upharpoonright U'$ is a homeomorphism from $U'$ onto $U$.  By rescaling the norm if necessary, we may assume that $U'=\{x\in E \ | \ \|x\|<1\}$.  Fix an open neighborhood $V$ of $1$.  Without loss of generality, we may assume that $V\subseteq U$.  Take $\lambda>0$ be such that $\{x\in E \ | \ \|x\|<\lambda\}\subseteq \exp^{-1}(V)$.  Take $n_V\in \n$ such that $n_V\cdot \lambda\geq 1$.  Suppose that $y\in U\setminus V$; write $y=\exp(x)$, where $x\in U'$ and $\|x\|\geq \lambda$.  Then $y^{n_V}=\exp(n_V\cdot x)$; since $\| n_V\cdot x\|\geq 1$, we have that $y^{n_V}\notin U$.              

Conversely, suppose that $G$ is uniformly NSS.  Let $U$ be an open, balanced, convex neighborhood of $0$ in $E$ so that $V:=\exp(U)$ is an open neighborhood of the identity of $G$ uniformly free from small subgroups and $\exp|U:U\to V$ is a homeomorphism.  It suffices to prove that $U$ is a bounded set.  Let $W$ be an open neighborhood of $0$ in $E$.  Without loss of generality, $W\subseteq U$.  Let $W':=\exp(W)$ and set $n:=n_{W'}$.  We claim that $\frac{1}{n}U\subseteq W$.  Suppose that $y\notin W$.  Then $\exp(y)\notin W'$, so $\exp(y)^i\notin V$ for some $i\in \{1,\ldots,n\}$.  For this $i$, $\exp(iy)\notin V$, so $iy\notin U$, so $y\notin \frac{1}{i}U$.  Since $U$ is balanced, $\frac{1}{n}U\subseteq \frac{1}{i}U$, whence $y\notin \frac{1}{n}U$.
\end{proof}

The previous lemma is in a similar spirit to a result of Glockner (which appears in the introduction of \cite{glockner}) stating that, under the same hypotheses on $G$ and $E$, we have that $G$ is NSS if and only if $E$ admits a continuous norm.  In the same paper, Gl\"ockner proves that direct limits of finite-dimensional Lie groups are NSS.  It is not clear to us if his methods answer the following

\begin{question}
If $G$ is a direct limit of finite-dimensional Lie groups, is $G$ UNSS?
\end{question} 

\noindent We now describe a large class of Lie groups which need not be locally exponential and which are uniformly NSS, namely the \textbf{strong ILB-Lie groups}.

\begin{df}(Omori, \cite{omori})
A \textbf{Sobolev chain} is a sequence $(E_n|n\geq d)$ of Banach spaces, where each $E_n$ is a Banach space and $E_{n+1}$ is continuously, linearly, and densely embedded in $E_n$ for all $n\geq d$.  We let $E:=\varprojlim E_n$, a Frechet space.  Without loss of generality, we may assume each $E_{n+1}$ is a (dense) subspace of $E_n$ and that $E=\bigcap E_n$, equipped with the inverse limit topology. 
\end{df}

\begin{df}(Omori, \cite{omori})
A topological group $G$ is called a \textbf{strong ILB-Lie group modeled on the Sobolev chain $(E_n:n\geq d)$} if the following conditions are satisfied:
\begin{enumerate}
\item[(N1)] There exists an open neighborhood $U$ of $0$ in $E_d$ and a homeomorphism $\psi$ of $U\cap E$ (equipped with the relative topology from $E$) onto an open neighborhood $\widetilde{U}$ of $e$ in $G$ such that $\psi(0)=e$;
\item[(N2)] There exists an open neighborhood $V$ of $0$ in $E_d$ such that $\psi(V\cap E)$ is symmetric and $\psi(V\cap E)^2\subseteq \psi(U\cap E)$; 
\item[(N3)] Let $\eta:(V\cap E) \times (V\cap E)\to U\cap E$ be defined by $$\eta(u,v)=\psi^{-1}(\psi(u)\psi(v)).$$  Then for every $n\geq d$ and $l\geq 0$, $\eta$ can be extended to a $C^l$-mapping $\eta:(V\cap E_{n+l})\times (V\cap E_n)\to U\cap E_n$;
\item[(N4)] For $v\in V\cap E$, let $\eta_v:V\cap E \to U\cap E$ be defined by $\eta_v(u)=\eta(u,v)$.  Then for every $v\in V\cap E$ and every $n\geq d$, $\eta_v$ can be extended to a $C^\infty$-mapping $\eta_v:V\cap E_n\to U\cap E_n$;
\item[(N5)] Let $\theta_n:E_n\times (V\cap E) \times (V\cap E) \to E_n$ be defined by $$\theta_n(w,u,v)=(d\eta_v)(u)(w).$$  Then for every $l\geq 0$, $\theta_n$ can be extended to a $C^l$-mapping $\theta_n:E_{n+l}\times (V\cap E_{n+l})\times (V\cap E_n)\to E_n$;
\item[(N6)] Let $\iota:V\cap E \to V\cap E$ be defined by $\iota(u)=\psi^{-1}(\psi(u)^{-1})$.  Then for every $n\geq d$ and $l\geq 0$, $\iota$ can be extended to a $C^l$-mapping $\iota:V\cap E_{n+l}\to V\cap E_n$;
\item[(N7)] For any $g\in G$, there exists an open neighborhood $W_g$ of $0$ in $E_d$ such that $g^{-1}\psi(W_g\cap E)g\subseteq \psi(U\cap E)$.  Let $A_g:W_g\cap E\to U\cap E$ be defined by $A_g(u)=\psi^{-1}(g^{-1}\psi(u)g)$.  Then for every $n\geq d$, $A_g$ can be extended to a $C^\infty$-mapping $A_g:W_g\cap E_k\to U\cap E_k$.
\end{enumerate}
\end{df}

\noindent If all $E_n$ are Hilbert spaces, then we speak of \textbf{strong ILH-Lie groups}.  Note that by (N1), a strong ILB-Lie group is a locally convex Lie group modeled on a Fr\'echet space.

There are many natural examples of strong ILB-Lie groups:
\begin{fact}[see Section 3.3 of \cite{neeb}]
Let $M$ be a smooth compact manifold.  Then the following are strong ILH-Lie groups:
\begin{enumerate}
\item $\Diff(M)$.
\item $\Diff(M,\omega):=\{\varphi \in \Diff(M) \ | \ \varphi^*\omega=\omega\}$, where $\omega$ is a symplectic $2$-form on $M$, a volume form on $M$, or a contact form on $M$.
\item $\Diff(M,N):=\{\varphi \in \Diff(M) \ | \ \varphi(N)=N\}$, where $N$ is a closed submanifold of $M$.
\item $\Diff_K(M):=\{\varphi \in \Diff(M) \ | \ \varphi \circ k =k \circ \varphi \text{ for all }k\in K\}$, where $K$ is a compact subgroup of $\Diff(M)$.
\end{enumerate}
\end{fact}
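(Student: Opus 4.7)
The plan is to follow Omori's original construction in \cite{omori}, which uses Sobolev spaces of vector fields as the underlying Banach chain. Fix once and for all a smooth Riemannian metric on the compact manifold $M$, with associated exponential map $\exp \colon U \to M$, where $U \subseteq TM$ is an open neighborhood of the zero section on which $\exp$ is a diffeomorphism onto its image. Let $d := \lfloor \dim M / 2 \rfloor + 2$ and, for each $n \geq d$, let $E_n := H^n(M, TM)$ be the Hilbert space of $H^n$-sections of $TM$. By Sobolev embedding, every element of $E_d$ is $C^1$; the chain $(E_n)_{n \geq d}$ is a Sobolev chain with inverse limit $E = C^\infty(M, TM)$. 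Define the chart $\psi$ near $e$ by $\psi(X)(x) := \exp_x(X(x))$; standard elliptic regularity shows that $\psi$ is a homeomorphism from a small $E_d$-ball around $0$ (intersected with $E$) onto a neighborhood of $\mathrm{id}_M$ in $\Diff(M)$, giving (N1) and (N2).

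For assertion (1), verifying (N3)--(N7) reduces to the \emph{$\Omega$-lemma}: if $f$ is a $C^\infty$ map and $g \in H^n$, then $f \circ g \in H^n$, and the map $g \mapsto f \circ g$ loses exactly $k$ derivatives when differentiated $k$ times, yielding a $C^l$ map $(f,g) \mapsto f \circ g \colon H^{n+l} \times H^n \to H^n$. The multiplication $\eta(u,v) = \psi^{-1}(\psi(u) \psi(v))$ is assembled from such compositions with the fixed smooth map $\exp$, so (N3) and (N5) follow at once with the derivative loss absorbed into the first argument. Property (N4) uses the fact that composing with a fixed $C^\infty$ map is smooth on each $H^n$ without loss of derivatives. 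Inversion (N6) is handled by the Banach-space implicit function theorem applied to $\eta$, exploiting that $d\eta(0,0)$ is the identity on $E_n$. Finally, (N7) is conjugation by a fixed smooth $g \in \Diff(M)$, which again is composition with the $C^\infty$ data $g$ and $g^{-1}$ and so preserves $H^n$ losslessly.

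For (2)--(4), the strategy is to restrict the model space $E_n$ to the closed Hilbert subspace of vector fields satisfying the relevant linearized constraint and to check that the restricted chart takes values in the subgroup. For $\Diff(M,\omega)$ with $\omega$ symplectic (respectively, a volume form or a contact form), a Moser-type argument produces a local bijection between the subgroup near $\mathrm{id}$ and the closed subspace of vector fields $X$ with $\mathcal L_X \omega = 0$ (respectively, the divergence-free or contact vector fields); each such subspace is $H^n$-closed and inherits the Sobolev-chain structure. For $\Diff(M,N)$, the model space consists of $H^n$-vector fields on $M$ tangent to $N$ along $N$, a closed subspace; choosing $\exp$ to come from a metric for which $N$ is totally geodesic guarantees that $\psi(X)$ preserves $N$ whenever $X$ is tangent to $N$. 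For $\Diff_K(M)$, use the closed subspace $E_n^K \subseteq E_n$ of $K$-invariant vector fields (obtained by averaging against the Haar measure on $K$, a continuous projection on each $E_n$); since $\psi$ is built from the $K$-equivariant exponential, it sends $E_n^K$ into $K$-equivariant diffeomorphisms. Properties (N1)--(N7) are inherited from the ambient $\Diff(M)$ once one checks that the model subspace is preserved by $\eta$ and $\iota$, which in each case is a direct computation from the defining constraint. The main obstacle throughout is the sharp bookkeeping of derivative loss in the $\Omega$-lemma, which is precisely what forces the ILB structure instead of a genuine Banach-Lie structure and which also underlies the verification that Moser's trick and the tubular-neighborhood trick go through in $H^n$-regularity.
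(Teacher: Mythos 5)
The paper does not prove this statement at all: it is quoted as a Fact with a pointer to Section 3.3 of \cite{neeb} (ultimately Omori \cite{omori}), so there is no internal proof to compare against. Your sketch follows exactly the route of the cited sources for item (1): Sobolev chain $E_n=H^n(M,TM)$, the Riemannian-exponential chart $\psi(X)(x)=\exp_x(X(x))$, and the $\alpha$/$\Omega$-lemmas with the loss of derivatives placed in the first (outer) factor, which is consistent with Omori's axioms (N3)--(N5); that part is essentially the standard argument and is fine as a sketch.

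There is, however, a genuine gap in your treatment of item (2). You say that (N1)--(N7) are ``inherited from the ambient $\Diff(M)$ once one checks that the model subspace is preserved by $\eta$ and $\iota$,'' but for $\Diff(M,\omega)$ the restricted exponential chart simply does not parametrize the subgroup: if $\mathcal L_X\omega=0$ it is false in general that $\psi(X)=\exp\circ X$ preserves $\omega$, so the closed subspace $\{X:\mathcal L_X\omega=0\}$ is \emph{not} carried into $\Diff(M,\omega)$ by $\psi$, and there is nothing to inherit. This is precisely why Ebin--Marsden and Omori must build a different chart, via the Hodge decomposition of $H^n$-forms together with Moser's deformation argument, and then re-verify the ILH regularity conditions (N3)--(N7) for that new chart with its own loss-of-derivative bookkeeping; the contact case needs yet another adapted construction. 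Your sentence invoking ``a Moser-type argument'' gestures at this, but as written it sits in contradiction with the inheritance claim, and the substantial work of the proof of (2) is exactly the part your proposal elides. Items (3) and (4) are fine in outline (totally geodesic metric for $N$, and a $K$-invariant metric --- which you should say explicitly is obtained by averaging over the compact group $K$ --- so that $\exp$ is $K$-equivariant), since there the restricted exponential chart really does map the model subspace into the subgroup.
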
  

\noindent Omori \cite{omori} shows that strong-ILB Lie groups are NSS.  We repeat his proof here as it actually shows that strong-ILB Lie groups are uniformly NSS.

\begin{prop}
If $G$ is a strong ILB-Lie group, then $G$ is uniformly NSS.
\end{prop}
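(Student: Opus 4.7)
Omori's original argument that strong ILB-Lie groups are NSS is in fact quantitative, and the bound it produces is uniform in the base point, which upgrades NSS to UNSS. I would follow this plan, localizing everything to the chart $\psi$ of (N1). Since $G$ is metrizable (its model space $E$ is Fr\'echet), Proposition~\ref{nschar} reduces the task to showing that the metric nonstandard hull $\hat{\widetilde U}_d$ is NSS, where $d$ is the left-invariant metric on $G$ transported from a Fr\'echet metric on $E$ via $\psi$.

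Shrink $U$ so that $U = B_{E_d}(0,r)$ and set $\widetilde U := \psi(U\cap E)$. For $u \in V\cap E$, define iteratively $\phi_1(u):=u$ and $\phi_{k+1}(u):=\eta(u,\phi_k(u))$, so long as the recursion stays inside $V\cap E$; then $\psi(u)^k = \psi(\phi_k(u))$. The key step is a local estimate at the origin: for each $m\geq d$ there exist $r_m>0$ and a modulus $\omega_m$ with $\omega_m(s)\to 0$ as $s\to 0$ such that, whenever $u,v\in V\cap E$ with $\|u\|_{m+1},\|v\|_m<r_m$,
\[ \|\eta(u,v)-u-v\|_m \;\leq\; \omega_m\bigl(\|u\|_{m+1}+\|v\|_m\bigr)\cdot\|u\|_{m+1}. \]
This follows by applying the fundamental theorem of calculus to $t\mapsto\eta(tu,v)$, using the $C^1$-regularity granted by (N3) at level $m$ with $l=1$, together with (N5), which delivers the continuity of $(u,v)\mapsto d\eta_v(u)\in L(E_{m+1},E_m)$ and the base value $d\eta_0\equiv I$ (since $\eta(\cdot,0)=\mathrm{id}$). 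A routine induction on $k$ then gives the quantitative conclusion: if $\|u\|_{m+1}<r_m/(4N)$ and $\phi_1(u),\ldots,\phi_N(u)$ all lie in $B_{E_m}(0,r_m)$, then $\|\phi_k(u)\|_m\leq 2k\,\|u\|_{m+1}$ for every $k\leq N$.

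Given $[\xi]\in\hat{\widetilde U}_d$ with $[\xi]^k$ defined for every $k\in\n$, Remark~\ref{defined} gives $\xi^k\in\widetilde U^*$ for every such $k$. Writing $\xi=\psi(u)$ and transferring the iterated estimate, we force $N\,\|u\|_{m+1}$ to remain bounded by a standard constant for every $N\in\n$ and every $m\geq d$, whence $\|u\|_{m+1}\approx 0$ for each $m$. Since the Fr\'echet topology on $E$ is generated by the seminorms $\|\cdot\|_n$ ($n\geq d$), this makes $u$ infinitesimally close to $0$ in $E^*$, so $\xi\in\mu$ and $[\xi]=[e]$. Consequently a sufficiently small $\hat d$-ball about $[e]$ in $\hat{\widetilde U}_d$ contains no nontrivial subgroup, i.e., $\hat{\widetilde U}_d$ is NSS; Proposition~\ref{nschar} now delivers UNSS for $G$.

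\textbf{Main obstacle.} The technical heart is the local Taylor-type estimate: the Sobolev-chain structure of (N3) prevents $\eta$ from being $C^1$ as a self-map of $E_m\times E_m$, since one loses a derivative on one of the factors. Hence the bound must be stated asymmetrically with $\|u\|_{m+1}$ on the right, and its derivation cannot go through joint differentiability at level $m$; instead one has to read the partial derivative $d\eta_v(u)\in L(E_{m+1},E_m)$ directly off (N5) as a continuous family of bounded operators. Once this estimate is established at every level, the induction on $k$ is routine, and it is precisely the \emph{uniformity in $u$} of the resulting growth bound that produces UNSS rather than only NSS.
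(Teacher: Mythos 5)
Your plan shares the paper's core device — following Omori, read the multiplication through the chart as $\eta$, compare $\eta(u,v)$ with $u+v$ via the fundamental theorem of calculus, and exploit the near-additivity to track the growth of powers. But there are two places where the proposal diverges from the paper's argument, and in both the paper's version is the one that actually closes the proof.

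First, the direction of the iterated estimate. The paper writes the telescoping identity
$$\eta_u^m(u)=mu+\sum_{i=0}^{m-1}\int_0^1\bigl((d\rho)_{\eta_u^{i}(u)}-I\bigr)(tu)(u)\,dt,$$
and, using (N3) with $n=d$, $l=1$ to make $d\rho_u(v)$ uniformly close to $I$ in $L(E_d,E_d)$, deduces the \emph{lower} bound $\|\eta_u^m(u)\|_d\geq m(1-\epsilon)\|u\|_d$. That is what forces the powers of a non-infinitesimal element to leave a bounded set. Your stated conclusion, $\|\phi_k(u)\|_m\leq 2k\|u\|_{m+1}$, is an \emph{upper} bound and cannot, by itself, show that powers escape a fixed neighborhood; the subsequent assertion that ``$N\|u\|_{m+1}$ remains bounded by a standard constant'' does not follow from it. You need the lower-bound half.

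Second, the level at which you estimate, and the reroute through Proposition~\ref{nschar}. The paper proves UNSS \emph{directly} by exhibiting $\psi(W_{d+1}\cap E)$ as a uniformly-no-small-subgroups neighborhood, working entirely at the bottom level $\|\cdot\|_d$: this suffices precisely because $W_{d+1}$ is chosen bounded in $E_d$, so once $\|\eta_u^m(u)\|_d$ exceeds that bound the power has left the neighborhood. By contrast, your reroute through the metric nonstandard hull asks you to conclude $\xi\in\mu$, i.e.\ $\|u\|_n\approx 0$ for \emph{every} $n\geq d$, from the hypothesis that all $\xi^k$ lie in $\i(\widetilde U^*)$. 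But $\widetilde U=\psi(U\cap E)$ with $U$ open (and bounded) only in $E_d$; the membership $\xi^k\in\i(\widetilde U^*)$ constrains $\|\phi_k(u)\|_d$, not the higher seminorms. So even with the correct lower bound, this route delivers $\|u\|_d\approx 0$ but not $\|u\|_n\approx 0$ for $n>d$, and hence not $[\xi]=[e]$. Working at all levels $m$ does not cure this, because nothing in the setup keeps $\phi_k(u)$ bounded in $E_m$ for $m>d$. Stick to the paper's direct argument at level $d$.
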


\begin{proof}
By (N3), we have the $C^1$-map $\eta:(V\cap E_{d+1})\times V\to U$.  For $u\in V\cap E_{d+1}$, let $\rho_u:V\to U$ be defined by $\rho_u(v)=\eta(u,v)$.  Fix $\epsilon\in (0,1)$.  Then there are open neighborhoods $W_{d+1}$ and $W_d$ of $V\cap E_{d+1}$ and $V$ respectively such that for all $u\in W_{d+1}$, $v\in W_d$, and $w\in E_d$, we have $$\|(d\rho_u)(v)(w)-w\|_d\leq \epsilon \|w\|_d,$$ where $\|\cdot \|_d$ denotes the norm of $E_d$.  Without loss of generality, we may assume that $W_{d+1}$ is contained in the open ball in $E_d$ of radius $M$.  The following claim finishes the proof of the theorem.

\

\noindent \textbf{Claim:}  $\psi(W_{d+1}\cap E)$ is uniformly free from subgroups.

\noindent \textbf{Proof of claim:}  For $v\in V$, define $\eta_v:V\cap E_{d+1}\to U$ by $\eta_v(u)=\eta(u,v)$.  Let $u\in V\cap E_{d+1}$ and suppose that $\eta_u^i(u)\in W_{d+1}$ for all $i\in \{1,\ldots,m\}$.  Since we have 
$$\eta_u^m(u)-\eta_u^{m-1}(u)=\int_0^1 (d\rho)_{\eta_u^{m-1}(u)}(tu)(u)dt,$$
it follows that $$\eta_u^m(u)=mu+\sum_{i=0}^{m-1} \int_0^1 ((d\rho)_{\eta_u^{i}(u)}-I)(tu)(u)dt.$$
By the above estimates, we see that $\|\eta_u^m(u)\|_d\geq m(1-\epsilon) \|u\|_d$. 

In order to finish the claim, let us suppose $Z$ is a neighborhood of $e$ in $G$ and $\psi(u)\in \psi(W_{d+1}\cap E)\setminus Z$.  Fix $M'\in \r$ such that if $\psi(u)\notin Z$, then $\|u\|_d\geq M'$.  Choose $n_Z$ so that $n_Z(1-\epsilon)M'\geq M$.  It then follows that $\eta_u^i(u)\notin W_{d+1}\cap E$ for some $i\leq n_Z$.    
\end{proof}

As a result, strong ILB-Lie groups are locally uniform.  We should mention that there are examples of strong ILB-Lie groups that are not locally exponential; for example, by pg. 343 of \cite{neeb}, the group $\operatorname{Diff}(M)$, where $M$ is a compact manifold, is not locally exponential.

\begin{question}
Other than the locally compact Lie groups, additive groups of locally convex spaces, and UNSS groups, are there any other examples of infinite-dimensional Lie groups that are locally uniform?
\end{question}

Karl Hermann-Neeb suggested that the unit groups of continuous inverse algebras might be locally uniform, but we were not able to establish this fact.






\section{An example:  the group of units of a unital Banach algebra}

Suppose that $\A$ is a unital Banach algebra and $G=U(\A)$ is the group of units of $\A$.  Recall that $G$ is an open neighborhood of $1$ in $\A$ and that $\{y\in \A\ | \ \|y-1\|<1\}\subseteq G$.  Since $G$ is a Banach-Lie group, $G$ is UNSS and locally uniform. We can thus construct the metric nonstandard hull of $G$.  However, there is also the Banach algebra nonstandard hull $\hat{\A}$ of $\A$.  The goal of this section is to understand the relationship between these two nonstandard hulls.  

We fix $M\in \r^{>0}$ such that $\|xy\|\leq M\|x\|\|y\|$ for all $x,y\in \A$; this is possible by the uniform boundedness principle.

As usual, for $x,y\in G^*$, we write $x\approx_ly$ to mean that $x^{-1}y\in \mu$.  For $x,y\in \A^*$, write $x\approx_\A y$ to mean $\|x-y\|\approx 0$.  Let $d$ be a left-invariant metric on $G$ compatible with the topology; in particular, for $a\in G^*$, we have $$a\approx_l 1\Leftrightarrow d(a,1)\approx 0\Leftrightarrow \|a-1\|\approx 0 \Leftrightarrow a\approx_\A 1.$$  Let $\epsilon\in\r^{>0}$ be small enough so that, setting $W:= \{x\in G\ | \ d(x,1)<\epsilon\}$, we have $W\subseteq \{x\in \A \ | \ \|x-1\|<1\}\subseteq G$ and $G$ is $W$-locally uniform.  Then $W^*\subseteq \A_f:=\{x\in \A^* \ | \ \|x\|\in \r_f\}$.  Moreover, for $x,y\in W^*$, we have $$x\approx_ly \Leftrightarrow x^{-1}y\in \mu\Leftrightarrow \|x^{-1}y-1\|\approx 0\Leftrightarrow \|x-y\|\approx 0.$$  Also notice that, if $x\in \i(W^*)$ and $y\in \A^*$ is such that $x\approx_\A y$, then $y\in W^*$.  Indeed, we first observe that $y\in G^*$.  To see this, observe that $d(x^{-1},1)=d(1,x)<\epsilon$, so $x^{-1}\in \A_f$.  We now have that $$\|x^{-1}y-1\|=\|x^{-1}(y-x)\|\leq M\|x^{-1}\|\|y-x\|\approx 0,$$ whence $x^{-1}y\in G^*$.  It follows that $y=x\cdot (x^{-1}y)\in G^*$.  Now take $\delta\in \r^{>0}$ so that $d(x,1)<\epsilon-\delta$.  Now $\|x-y\|\approx 0\Rightarrow \|x^{-1}y-1\|\approx 0$, whence $d(x,y)=d(x^{-1}y,1)<\delta$ and hence $d(y,1)<\epsilon$.  Consequently, for $x\in \i(W^*)$, we have that $$[x]:=\{y\in W^* \ | \ x\approx_ly\}=\{y\in \A^* \ | \ \|x-y\|\approx 0\}.$$  In particular, we have that $\hat{W}_d\subseteq \hat{A}$.  Moreover, since $\hat{W}_d$ is globally inversional, we have that $\hat{W}_d\subseteq U(\hat{\A})$.  

We claim that $\hat{W}_d$ is an open subset of $U(\hat{A})$.  Fix $[x]\in \hat{W}_d$.  Fix $\delta>0$ so that $d(x,1)<\epsilon-\delta$.  Fix $\alpha>0$ small enough so that, for all $z\in \A$, if $\|1-z\|<\alpha$, then $z\in G$ and $d(1,z)<\frac{\delta}{2}$.  Fix $\eta>0$ small enough so that $\eta\cdot \|x^{-1}\|<\alpha$.  Now suppose that $[y]\in \hat{A}$ is such that $\|[x]-[y]\|<\eta$.  Then $\|x-y\|<\eta$, so $\|1-x^{-1}y\|=\|x^{-1}(x-y)\|<\alpha$.  Thus, $x^{-1}y\in G^*$ and $d(1,x^{-1}y)<\frac{\delta}{2}$, so $d(x,y)<\frac{\delta}{2}$, whence $d(y,1)<\epsilon-\frac{\delta}{2}$ and thus $y\in \i(W^*)$ and $[y]_d\in \hat{W}$.  We have thus proven:

\begin{prop}
$\hat{W}_d$ is a restriction of $U(\hat{A})$ to a symmetric neighborhood of the identity.
\end{prop}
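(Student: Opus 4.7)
The plan is to assemble the observations made in the preceding paragraphs into a clean argument; there are really three things to verify, namely that $\hat{W}_d$ is a subset of $U(\hat{\A})$, that it is a symmetric neighborhood of the identity in $U(\hat{\A})$, and that its local group structure agrees with the restriction of the multiplication of $\hat{\A}$.

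First I would record the containment $\hat{W}_d \subseteq U(\hat{\A})$. The key identity already established is that for $x \in \i(W^*)$, the $\approx_l$-class of $x$ (which determines its image in $\hat{W}_d$) coincides exactly with its $\approx_\A$-class (which determines its image in $\hat{\A}$). This makes the natural map $\hat{W}_d \to \hat{\A}$ a well-defined injection, and since $\hat{W}_d$ is globally inversional as a local group, its image lies inside $U(\hat{\A})$. The compatibility of $[x] \cdot [y] = [xy]$ with the product of $\hat{\A}$ is then automatic.

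Next I would verify that $\hat{W}_d$ is symmetric: because $W$ is a symmetric open neighborhood of $1$, inversion carries $\i(W^*)$ into itself and $[x]^{-1} = [x^{-1}]$ remains in $\hat{W}_d$.

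The substantive step, which I expect to be the main obstacle, is to show that $\hat{W}_d$ is an open subset of $U(\hat{\A})$ for the Banach-algebra norm. This is precisely the computation carried out in the paragraph immediately preceding the proposition: given $[x]\in \hat{W}_d$, choose $\delta > 0$ with $d(x,1)<\epsilon-\delta$, then $\alpha>0$ small enough that $\|1-z\|<\alpha$ forces $z\in G$ and $d(1,z)<\delta/2$, and finally $\eta>0$ with $\eta\cdot\|x^{-1}\| < \alpha$. Using the submultiplicative estimate $\|1-x^{-1}y\| \leq M\|x^{-1}\|\|x-y\|$, any $[y]\in \hat{\A}$ within Banach-norm distance $\eta$ of $[x]$ satisfies $x^{-1}y\in G^*$ and $d(1,x^{-1}y)<\delta/2$, whence $d(y,1)<\epsilon-\delta/2$, placing $y$ in $\i(W^*)$ and therefore $[y]\in \hat{W}_d$. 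The delicate point is managing the interplay between the $d$-distance (which governs membership in $W$) and the Banach-algebra distance (which governs the topology on $\hat{\A}$), while simultaneously keeping the elements inside the finite part so that the classes are defined; this is handled by the explicit choice of constants above. With these three steps in hand, the proposition follows.
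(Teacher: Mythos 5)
Your proposal is correct and follows exactly the same route as the paper: it packages the computations from the paragraph preceding the proposition (the identification of $\approx_l$-classes with $\approx_\A$-classes on $\i(W^*)$, the use of global inversionality to land in $U(\hat{\A})$, and the explicit $\delta$--$\alpha$--$\eta$ openness estimate) into the three verifications you list. The only minor addition is the explicit check of symmetry, which the paper leaves implicit; one small quibble is that the constraint on $\eta$ should be $\eta\cdot M\|x^{-1}\|<\alpha$ to account for the submultiplicativity constant $M$ you (correctly) invoke, but this is cosmetic.
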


\section{Relationship with Pestov's Nonstandard Hull Construction}

As Banach-Lie groups are uniformly NSS (and hence locally uniform), we can consider their nonstandard hulls.  Pestov \cite{pestov} also has a nonstandard hull construction for Banach-Lie groups; his nonstandard hull is once again a Banach-Lie group.  In this section, we show that the metric nonstandard hull of a Banach-Lie group is locally isomorphic to the nonstandard hull that Pestov constructs.  The reason that the aforementioned fact is interesting is that our nonstandard hull construction is purely topological, while Pestov's construction involves some nontrivial Lie theory.  Moreover, we show that, for a suitable choice of a locally uniform neighborhood, the corresponding global nonstandard hull is the universal covering group of Pestov's nonstandard hull. 

Throughout this section, $G$ denotes a Banach-Lie group with Banach-Lie algebra $\g$.  A norm $\| \cdot \|$ on $\g$ is fixed so that $\|[x,y]\|\leq \|x\|\|y\|$ for all $x,y\in \g$.  Recall that we have the exponential map $\exp:\g\to G$, which is a local diffeomorphism.  We let $\log:G\rightharpoonup \g$ be the inverse of $\exp$.  We will need the following two lemmas.

\begin{lemma}\label{expuc}
$\exp:\g\to G$ is locally uniformly continuous.
\end{lemma}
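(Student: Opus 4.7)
The plan is to use the Baker--Campbell--Hausdorff (BCH) series. Since $\|[u,v]\|\le\|u\|\|v\|$, standard estimates show that the series
$$\operatorname{BCH}(u,v) \;=\; u + v + \tfrac{1}{2}[u,v] + \tfrac{1}{12}\bigl([u,[u,v]] - [v,[u,v]]\bigr) + \cdots$$
converges absolutely on a product ball $B_{r_0}\times B_{r_0}\subseteq\g\times\g$ for some $r_0>0$, and one has $\exp(u)\exp(v) = \exp(\operatorname{BCH}(u,v))$ there. Fix any $R$ with $0<R<r_0/2$, and set $B := \{x\in\g : \|x\|<R\}$. I will show that $\exp|B$ is uniformly continuous into the left uniform space $(G,\u_l)$, which is the content of local uniform continuity.

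The key estimate is a Lipschitz bound
$$\|\operatorname{BCH}(-x,y)\| \;=\; \|\operatorname{BCH}(-x,y)-\operatorname{BCH}(-x,x)\| \;\le\; K\|y-x\|,\qquad x,y\in B,$$
with $K=K(R)$ depending only on $R$. Since $\operatorname{BCH}$ is analytic on $B_{r_0}\times B_{r_0}$, its partial derivative $\partial_2\operatorname{BCH}$ is uniformly bounded on the closed sub-ball $\overline{B}\times\overline{B}$, and the mean-value inequality delivers the estimate. A more hands-on alternative is to bound the BCH series term-by-term, using the identity $[\,\cdot\,,y]-[\,\cdot\,,x]=[\,\cdot\,,y-x]$ inside each iterated bracket together with $\|\operatorname{ad}(u)\|\le\|u\|\le R$.

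With the estimate in hand, uniform continuity is immediate. A base for $\u_l$ is given by entourages $\{(g,h)\in G\times G : g^{-1}h\in V\}$ as $V$ runs over neighborhoods of $e$. Given such a $V$, continuity of $\exp$ at $0$ yields $\eta>0$ with $\exp(\{z\in\g:\|z\|<\eta\})\subseteq V$. Then for $x,y\in B$ with $\|x-y\|<\eta/K$, both $-x$ and $y$ lie in $B_{r_0}$, so
$$\exp(x)^{-1}\exp(y) \;=\; \exp(\operatorname{BCH}(-x,y)) \;\in\; V,$$
which is precisely the required uniform continuity.

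The main obstacle is the Lipschitz bound on $\operatorname{BCH}$: the analytic route is quick in spirit but silently invokes Banach-valued analytic function theory, while the term-by-term bound requires careful bookkeeping with the BCH coefficients. A third option that sidesteps BCH entirely is to use the classical formula $\tfrac{d}{dt}\exp(\gamma(t)) = dL_{\exp(\gamma(t))}\circ\tfrac{1-e^{-\operatorname{ad}(\gamma(t))}}{\operatorname{ad}(\gamma(t))}(\gamma'(t))$ with $\gamma(t)=(1-t)x+ty$, estimate the left-logarithmic derivative by $e^{R}\|y-x\|$, and then control $p(\exp(x),\exp(y))$ for an arbitrary continuous left-invariant pseudometric $p$ by integrating along this path. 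This variant works on arbitrarily large balls but imports more Lie-theoretic machinery than the BCH approach.
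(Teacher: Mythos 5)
Your argument is correct and is in the same spirit as the paper's, differing mainly in how the key Lipschitz estimate is produced. Both proofs reduce the claim to a bound of the form $\|\log(\exp(x)^{-1}\exp(y))\|\le K\|x-y\|$ for $x,y$ in a small ball, and then close in the same way: given a neighborhood $U$ of the identity, pick $\eta>0$ with $\exp(B_\eta)\subseteq U$ and observe that $\|x-y\|<\eta/K$ forces $\exp(x)^{-1}\exp(y)\in U$. The paper obtains the Lipschitz bound by expanding the local multiplication map $m(x,y)=\exp^{-1}(\exp(x)\exp(y))$ via Taylor's formula with integral remainder and bounding the relevant differential using its continuity together with the uniform boundedness principle, which yields $x*y=x+y+O(\|x\|\,\|y\|)$ and hence the estimate $\|x*(-y)\|\le C\|x-y\|$. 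You instead invoke convergence of the BCH series, note that $\operatorname{BCH}(-x,x)=0$, and control $\partial_2\operatorname{BCH}$ on a smaller closed ball by Cauchy-type estimates for Banach-space-valued power series, after which the mean value inequality gives the Lipschitz bound at once. Your route is somewhat cleaner analytically, since the Lipschitz bound falls out directly from a bounded partial derivative rather than being read off a second-order expansion; the paper's route, on the other hand, uses only the abstract smoothness of $m$ supplied by the Banach--Lie group structure and avoids explicit appeal to BCH series convergence.
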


\begin{proof}
We first show that $x*y:=\exp^{-1}(\exp(x)\cdot \exp(y)):\g\times \g\rightharpoonup \g$ satisfies $x*y=x+y+O(\|x\|\|y\|)$ for $x,y$ small enough.  Indeed, setting $m(x,y):=x*y$, we have $m(x,y)=x+y+\int_0^1 (1-t)dm(tx,ty)(x,y)dt$.  Now $\|dm(tx,ty)(x,y)\|\leq \|dm(tx,ty)\|\cdot \|(x,y)\|\leq \|dm(tx,ty)\|\cdot \|x\|\cdot \|y\|$.  Suppose that $V\times V$ is the domain of $m$.  Since the map $dm:V\times V\times \g\times \g\to \g$ is continuous, if $V$ is chosen small enough, we have that, for any $(a,b)\in \g\times \g$, $\sup \{\|dm(tx,ty)(a,b)\| \ : \ t\in [0,1], \ x,y\in V\}<\infty$.  Thus, by the uniform boundedness principle, there is $M\in \r^{>0}$ such that $\|dm(tx,ty)\|\leq M$ for all $t\in [0,1]$ and $x,y\in V$.  It follows that $$\left\|\int_0^1 (1-t)dm(tx,ty)(x,y)dt\right\|\leq M\cdot \|x\|\cdot \|y\|.$$ 

As a result, we have that $x*(-y)=O(\|x-y\|)$, say $\|x*(-y)\|\leq C\|x-y\|$.  Let $U$ be a neighborhood of the identity and choose $\epsilon>0$ small enough so that $\|z\|<\epsilon$ implies $\exp(z)\in U$.  Then if $\|x-y\|<\frac{\epsilon}{C}$, we have $$\exp(x)\exp(y)^{-1}=\exp(x*(-y))\in U.$$
\end{proof}

\begin{lemma}
$\log:G\rightharpoonup \g$ is locally uniformly continuous.
\end{lemma}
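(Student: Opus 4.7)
The plan is to derive local uniform continuity of $\log$ from the Baker--Campbell--Hausdorff style estimate already established in the proof of Lemma \ref{expuc}. That proof showed that, on a sufficiently small balanced neighborhood $V$ of $0$ in $\g$, the product $x * y := \log(\exp(x)\exp(y))$ decomposes as $x * y = x + y + R(x,y)$ with $\|R(x,y)\| \leq M\|x\|\|y\|$ for some constant $M$. I would apply this estimate in the direction opposite to that of the previous lemma.

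First I would fix $\epsilon_0 > 0$ with $M\epsilon_0 \leq 1$ and with the open norm-ball $V_0 := \{x \in \g : \|x\| < \epsilon_0\}$ contained in $V$, and set $W := \exp(V_0)$, an open neighborhood of $e$ in $G$. For $a \in W$ and $g$ close enough to $e$ in $G$ that $\log(g) \in V$, one has $\log(ag) = \log(a) * \log(g)$; setting $x := \log(a) \in V_0$ and $z := \log(g)$, this gives
\[
\log(a) - \log(ag) \;=\; x - (x + z + R(x,z)) \;=\; -z - R(x,z),
\]
and hence
\[
\|\log(a) - \log(ag)\| \;\leq\; (1 + M\|x\|)\,\|z\| \;\leq\; 2\,\|\log(g)\|.
\]

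Given $\epsilon > 0$, continuity of $\log$ at $e$ yields a neighborhood $W''$ of $e$ in $G$ such that $g \in W''$ implies $\|\log(g)\| < \epsilon/2$. For $a \in W$ and $b \in G$ with $a^{-1}b \in W''$ (i.e., $a$ and $b$ close in the left uniformity), writing $b = ag$ with $g = a^{-1}b$, the above estimate gives $\|\log(a) - \log(b)\| < \epsilon$. This is exactly local uniform continuity of $\log$ on $W$.

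The one subtle point is keeping the factor $1 + M\|\log(a)\|$ bounded uniformly in $a$; this is ensured by the a priori constraint $\|\log(a)\| < \epsilon_0$ built into the choice of $W$. Beyond that, everything reduces to the BCH estimate of the previous lemma together with continuity of $\log$ at the identity, so no further Lie-theoretic input is required.
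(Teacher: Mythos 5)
Your proof is correct and follows essentially the same route as the paper: both decompose one of $\log a$, $\log b$ as a BCH product of the other with $\log$ of the (left or right) quotient $a^{-1}b$ or $ab^{-1}$, apply the estimate $\|(x*y)-(x+y)\|\leq M\|x\|\|y\|$ from the preceding lemma, and conclude using continuity of $\log$ at $e$. The paper phrases the argument in terms of $ab^{-1}$ rather than $a^{-1}b$, but since the two uniformities coincide near the identity this is only a cosmetic difference.
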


\begin{proof}
Given $\epsilon>0$, we need a neighborhood $U$ of $1$ in $G$ such that whenever $a,b$ are small enough and $ab^{-1}\in U$, then $\|\log a -\log b\|< \epsilon$.  We showed in the previous lemma that $x*y:=x+y+O(\|x\|\cdot \|y\|)$ for $x,y\in \g$ small enough.  We then get $$x=(x*(-y))*y=(x*(-y))+y+z,$$ where $\|z\|\leq C'\cdot \|x*(-y))\|\cdot \|y\|$.  Since $\|y\|$ is bounded, it then follows that there is a constant $C$ such that $\|x-y\|\leq C\| x*(-y)\|.$   

Choose a neighborhood $U$ of $1$ such that $c\in U$ implies $\|\log c\|< \frac{\epsilon}{C}$.  Suppose $a,b\in G$ are sufficiently close to the identity and $ab^{-1}\in U$.  Then if $a=\exp x$ and $b=\exp y$, we have $\log(ab^{-1})=x*(-y)$, so $$\|\log a-\log b\|=\|x-y\|\leq C\|x*(-y)\|=C\|\log(ab^{-1})\|< \epsilon.$$
\end{proof}

We now summarize Pestov's construction of the nonstandard hull of $G$, which we will denote by $\gh_{\VP}$.  Define $\mu_\g:=\{x\in \g^* \ | \ \|x\|\in \mu(0)\}$, which is a Lie ideal of the Lie algebra  $\g_f:=\{x\in \g^* \ | \ \|x\|\in \r^f\}$.  Let $\hg:=\g_f/\mu_\g$ be the quotient Lie algebra.  Let $\pi_\g:\g_f\to \hg$ be the quotient map and define a norm on $\hg$ by $\|\pi_\g(x)\|:=\st(\|x\|)$.  One then defines $G_{f,\VP}:=\bigcup_n(\exp V)^n\subseteq G^*$ where $V$ is any ball of finite, noninfinitesimal radius in $\g^*$.  (This turns out to be independent of $V$.)  Pestov shows (using some nontrivial Lie theory) that $\mu$ is a normal subgroup of $G_{f,\VP}$.  We set $\gh_{\VP}:=G_{f,\VP}/\mu$ and let $\pi_G:G_{f,\VP}\to \gh_{\VP}$ be the quotient map.  We define $\hat{\exp}:\hg \to \gh_{\VP}$ by $\hat{\exp}(\pi_\g(x)):=\pi_G(\exp(x))$.  Pestov shows that there is a neighborhood of $0$ in $\hg$ such that $\hat{\exp}$ restricted to this neighborhood is injective and that there is a unique structure of a Banach-Lie group on $\gh_{\VP}$ such that $\hat{\exp}$ becomes a local diffeomorphism.   

Fix $\delta'>0$ and $M\in \r^{>0}$ such that, if $\max(\|x\|,\|y\|)<\delta'$, then $\|(x*y)-(x+y)\|\leq M\cdot \|x\|\cdot \|y\|$ (see the proof of Lemma \ref{expuc}).  We now fix $\delta>0$ satisfying $\delta<\min(\delta',\frac{1}{M})$ and so that, setting $$V:=\{g\in \g \ | \ \|g\|<\delta\} \text{, }Z:=\{g\in \hg \ | \ \|g\|<\delta\}, \text{ and }W:=\exp(V),$$ we have:
\begin{itemize}
\item $\exp|V:2V\to \exp(V)$ is a diffeomorphism onto a neighborhood of $e$ in $G$;
\item $\hat{\exp}|3Z:3Z\to \hat{\exp}(3Z)$ is a diffeomorphism onto an open neighborhood of $\pi_G(e)$ in $\gh_{\VP}$;
\item  $\exp|V$ is an isomorphism of uniform spaces (which is possible by the previous two lemmas);
\item $G$ is $W$-locally uniform.
\end{itemize}

Fix a left-invariant metric $d$ on $G$ such that $W\subseteq B_d(e,1)$. 
By transfer, $W^*= \exp(V^*)\subseteq G_{f,\VP}$.  We can thus consider the injective map $$i:\hat{W}_d \to \gh_{\VP}, \quad i([x])=\pi_G(x).$$  We claim that $i(\hat{W}_d)\subseteq \hat{\exp}(Z)$.  To see this, take $[x]\in \hat{W}_d$, so $x=\exp(u)$ for some $u\in V^*$.  Then $i([x])=\pi_G(x)=\hat{\exp}(u+\mu_\g)$.  It remains to show that $\st \|u\|<\delta$, that is, that $u\in \i(V^*)$.  Since $\exp|V$ is an isomorphism of uniform spaces, we have $\mu(u)=\log \mu(x)$.  Since $\mu(x)\subseteq W^*$, we have $\mu(u)\subseteq V^*$.

In what follows, we view $\hat{\exp}(Z)$ as the local group $\hat{G}_{\VP}|\hat{\exp}(Z)$.
\begin{thm}
$i:\hat{W}_d\to \hat{\exp}(Z)$ is an isomorphism of local groups.  Moreover, $\hat{G}_W$ is the universal covering group of $\hat{G}_{\VP}$.
\end{thm}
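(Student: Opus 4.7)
The plan is to break the theorem into two parts: first show that $i: \hat{W}_d \to \hat{\exp}(Z)$ is an isomorphism of local groups, then apply Proposition \ref{unimal} to identify the Mal'cev hull $\hat{G}_W$ with the universal cover of $\hat{G}_{\VP}$.

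For the local isomorphism, the preceding discussion has already shown that $i$ is a well-defined injection into $\hat{\exp}(Z)$; what remains is surjectivity, compatibility of multiplication domains, multiplicativity, and that $i$ is a homeomorphism. Given $y = \hat{\exp}(\pi_\g(u)) \in \hat{\exp}(Z)$, the condition $\st\|u\| < \delta$ leaves a standard gap below $\delta$, so $u \in V^*$; and for any $v \in \mu_\g$, the BCH estimate $\|u*v\| \leq \|u\| + \|v\|(1 + M\|u\|)$ yields $\|u*v\| \approx \|u\| < \delta$, so $\exp(u)\exp(v) = \exp(u*v) \in W^*$ and hence $\exp(u) \in \i(W^*) = W_{d,f}$ with $i([\exp(u)]) = y$. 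For the multiplication domains, write $x = \exp(a)$ and $y = \exp(b)$ with $a, b \in V^*$ and $\st\|a\|, \st\|b\| < \delta$. Since $M\delta < 1$, BCH gives $\|a*b\| \leq 2\delta + M\delta^2 < 3\delta$, so $xy = \exp(a*b)$ with $a*b \in (3V)^*$. The same perturbation argument as for surjectivity shows $xy \in \i(W^*)$ iff $\st\|a*b\| < \delta$, while $\pi_G(xy) = \hat{\exp}(\pi_\g(a*b)) \in \hat{\exp}(Z)$ iff $\pi_\g(a*b) \in Z$ (using injectivity of $\hat{\exp}|3Z$ and that $\pi_\g(a*b) \in 3Z$) iff $\st\|a*b\| < \delta$. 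Hence the domains match, and whenever both are defined $i([x][y]) = \pi_G(xy) = \pi_G(x)\pi_G(y) = i([x])\,i([y])$; inversion is analogous.

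Continuity and openness of $i$ follow from the two preceding lemmas on the local uniform continuity of $\exp$ and $\log$. The map $[x] \mapsto \pi_\g(\log x)$ is a well-defined continuous map $\hat{W}_d \to Z$ (into the ambient Banach space $\hg$), and $i$ factors as $\hat{\exp}$ composed with this map; the inverse $\pi_\g(u) \mapsto [\exp(u)]$ is continuous by the dual argument. Hence $i$ is a homeomorphism, and so an isomorphism of local groups. Passing to Mal'cev hulls gives $\hat{G}_W = (\hat{W}_d)^M \cong (\hat{\exp}(Z))^M$.

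For the covering claim, I apply Proposition \ref{unimal} to $\hat{G}_{\VP}$ with $V' := \hat{\exp}(3Z)$ and $U := \hat{\exp}(Z)$. The group $\hat{G}_{\VP}$ is connected because $G_{f,\VP} = \bigcup_n (\exp V)^n$ is path-connected and the quotient by $\mu$ inherits this, and it is locally simply connected since $\hat{\exp}$ realizes it locally as an open set in the Banach space $\hg$. The set $V'$ is simply connected as the diffeomorphic image of the convex ball $3Z$, and $U$ is a connected symmetric neighborhood of the identity with $U^2 \subseteq V'$ by the estimate $\|a*b\| < 3\delta$ for $a,b \in Z$. Proposition \ref{unimal} then yields that $U^M$ is the universal group covering of $\hat{G}_{\VP}$, and via the local isomorphism $\hat{G}_W \cong U^M$, proving the claim. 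The main subtlety I anticipate is the bookkeeping that pairs the ``strict standard-part less than $\delta$'' condition defining $\i(W^*)$ with the description of $\hat{\exp}(Z)$, which rests on a clean BCH estimate together with the fact that $\hat{\exp}$ is a diffeomorphism on the larger ball $3Z$ so that lifts are unambiguous; once this matching is in place, the covering statement reduces to a direct invocation of Proposition \ref{unimal}.
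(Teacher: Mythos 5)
Your proposal is correct and follows the same overall strategy as the paper: first establish that $i$ is an isomorphism of local groups, then invoke Proposition~\ref{unimal}. There are a few tactical differences. For surjectivity and for showing that the two domains of multiplication coincide, you work directly from the BCH estimate $\|x*y-(x+y)\|\leq M\|x\|\|y\|$, whereas the paper leans on the earlier fact that $\exp|V$ is an isomorphism of uniform spaces; both routes are valid. For the homeomorphism, you factor $i$ through $[x]\mapsto\pi_\g(\log x)$ followed by the local diffeomorphism $\hat{\exp}$, which is cleaner than the paper's explicit metric estimates. The one place where you should be more careful is the covering claim: you apply Proposition~\ref{unimal} directly to $\hat{G}_{\VP}$ after asserting that it is connected, while the paper sidesteps this by passing to the identity component $\hat{G}_{\VP}^{\operatorname{o}}$ and then noting it is locally isomorphic to $\hat{G}_{\VP}$. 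Your connectedness assertion is in fact true, but the justification you give (that $G_{f,\VP}=\bigcup_n(\exp V)^n$ is ``path-connected and the quotient by $\mu$ inherits this'') is not precise, since $G_{f,\VP}$ is an external subset of $G^*$ that does not carry a standard topology one could use here. The clean argument is that $\hat{G}_{\VP}=\bigcup_n\bigl(\hat{\exp}(\pi_\g(V))\bigr)^n$, where $\pi_\g(V)$ is a ball in $\hg$ (hence connected), $\hat{\exp}$ is continuous as the exponential map of the Banach--Lie group $\hat{G}_{\VP}$, and each power of the connected set $\hat{\exp}(\pi_\g(V))\ni\pi_G(e)$ is again connected and contains the identity. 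With that fix, your version is actually slightly sharper than the paper's, which phrases everything in terms of $\hat{G}_{\VP}^{\operatorname{o}}$.
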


\begin{proof}
We first show that $i$ is onto.  Consider $\hat{\exp}(a+\mu_\g)$, where $a+\mu_\g\in Z$, so $\st(\|a\|)<\delta$.  Then $\hat{\exp}(a+\mu_\g)=\pi_G(\exp(a))$.  We need to prove that $\exp(a)\in \i(W^*)$.  Since $\exp|V$ is an isomorphism of uniform spaces, $\mu(\exp(a))=\exp(\mu(a))$; since $a\in \i(V^*)$, we have $\mu(\exp(a))\subseteq W^*$.

If $([x],[y])\in \o$, then $$i([x]\cdot [y])=i([xy])=\pi_G(xy)=\pi_G(x)\cdot \pi_G(y)=i([x])\cdot i([y]).$$  A similar argument shows that $i$ respects inversion.  Suppose now that $\hat{\exp}(a+\mu_\g)\cdot \hat{\exp}(b+\mu_g)=\hat{\exp}(c+\mu_g)$, where $c+\mu_\g\in Z$.  Then $\exp(a)\cdot \exp(b)\approx \exp(c)\in \i(W^*)$, whence $([\exp(a)],[\exp(b)])\in \Omega$.  It follows that $i$ is a strong morphism of discrete local groups.

It remains to prove that $i$ is a homeomorphism.  By local homogeneity (Lemma 2.16 of \cite{Gold}), it suffices to prove that $i$ is continuous and open at $[e]$.  Towards this end, let $\mathcal{O}\subseteq \hat{\exp}(Z)$ be an open neighborhood of $\pi_G(e)$; we must prove that $i^{-1}(\mathcal{O})$ is open in $\hat{W}_d$.  Fix $\gamma \in (0,\delta)$ be so that if $\|g+\mu_\g\|<\gamma$, then $\hat{\exp}(g+\mu_\g)\in \mathcal{O}$.  It suffices to find $\eta>0$ such that if $[x]\in \hat{W}$ is such that $\dh([x],[e])<\eta$, then $\|\hat{\exp}^{-1}(\pi_G(x))\|<\gamma$.  Fix $\gamma_1\in (0,\gamma)$ and let $V_1:=\{g\in \g \ | \ \|g\|<\gamma_1\}$.  Then $\exp(V_1)$ is a neighborhood of $e$ in $G$ and so we can choose $0<\eta<1$ so that $B(e,\eta)\subseteq \exp(V_1)$.  We will show that this is the desired $\eta$.  Suppose $\dh([x],[e])<\eta$.  Then $d(x,e)<\eta$, whence $x=\exp(x')$ for $x'\in V_1^*$.  Then $\pi_G(x)=\pi_G(\exp(x'))=\hat{\exp}(\pi_\g(x'))$.  Since $\|\pi_\g(x')\|=\st(\|x'\|)\leq \gamma_1<\gamma$, we have that $\|\hat{\exp}^{-1}(\pi_G(x))\|=\|\pi_\g(x')\|<\gamma$.    

We now show that $i$ is an open map.  Fix $\alpha\in (0,1]$.  It suffices to show that $i(\{[x]\in \hat{W}_d \ : \ \hat{d}([x],[e])<\alpha\})$ is an open subset of $\hat{G}_{\VP}$.  Fix $[x]\in \hat{W}_d$ such that $\hat{d}([x],[e])<\alpha$.  Since $\pi_G(x)\in \hat{\exp}(Z)$, we can write $\pi_G(x)=\hat{\exp}(a+\mu_\g)=\pi_G(\exp(a))$ for some $a+\mu_\g\in Z$.  Fix $\beta>0$ so that $\st(d(x,e))+\beta<\alpha$ and fix $\eta>0$ small enough so that, for any $c,d\in V$ with $\|c-d\|<\eta$, we have $d(\exp(c),\exp(d))<\beta$ (by uniform continuity of $\exp|V$) and further satisfying $\st(\|a\|)+\eta<\delta$.  Let $$\cO=\hat{\exp}(\{b+\mu_\g \in \hat{\g} \ | \ \|(a+\mu_\g)-(b+\mu_\g)\|<\eta\}).$$  Then $\cO$ is an open subset of $\hat{\exp}(Z)$.  We claim that if $\pi_G(y)\in \cO$, then $\pi_G(y)\in i(\{[x]\in \hat{W} \ : \ \hat{d}([x],[e])<\alpha\})$.  Take $b+\mu_\g$ so that $\st(\|a-b\|)<\eta$ and $\pi_G(y)=\hat{\exp}(b+\mu_\g)=\pi_G(\exp(b))$.  Then by transfer, we have $d(\exp(a),\exp(b))<\beta$, whence $d(y,e)\approx d(\exp(b),e)\leq d(\exp(a),e)+\beta$ and thus $\st(d(y,e))<\alpha$.

We now prove the moreover part.  Set $\hat{G}_{\VP}^{\operatorname{o}}$ to be the identity component of $\hat{G}_{\VP}$.  Then $\hat{G}_{\VP}^{\operatorname{o}}$ is a connected, locally simply connected group.  Moreover, $\hat{\exp}(3Z)$ is a simply connected open neighborhood of the identity in $\hat{G}_{\VP}^{\operatorname{o}}$ and $\hat{\exp}(Z)$ is a connected open neighborhood of the identity in $\hat{G}_{\VP}^{\operatorname{o}}$ satisfying $\hat{\exp}(Z)^2\subseteq \hat{\exp}(3Z)$.  To see this last part, consider $\hat{\exp}(\pi_\g(a)),\hat{\exp}(\pi_\g(b))\in \hat{\exp}(Z)$.  Then $\hat{\exp}(\pi_\g(a))\cdot \hat{\exp}(\pi_\g(b))=\hat{\exp}(\pi_\g(a*b))$; it remains to see that $\st \|a*b\|<3\delta$.  However, $$\st \|a*b\|\leq \st\|a\|+\st\|b\|+M\cdot \st\|a\|\cdot \st\|b\|<3\delta$$ by the choice of $\delta$.  Thus, by Proposition \ref{unimal}, the Mal'cev hull of $\hat{\exp}(Z)$ is the universal covering group of $\hat{G}_{\VP}^{\operatorname{o}}$.  The desired result follows from the fact that $\hat{\exp}(Z)$ is isomorphic to $\hat{W}_d$ and that $\hat{G}_{\VP}^{\operatorname{o}}$ is locally isomorphic to $\hat{G}_{\VP}$.
\end{proof}


\begin{thebibliography}{1}

\bibitem{D} M. Davis, \textit{Applied Nonstandard Analysis}, John Wiley and Sons Inc., 1977.
\bibitem{vdd-Gold} L. van den Dries and I. Goldbring, \textit{Locally compact contractive local groups}, J. Lie Theory \textbf{19} (2009), 685-695.
\bibitem{enflo} P. Enflo, \textit{Uninform structures and square roots in topological groups. I, II.}, Israel J. Math. \textbf{8} (1970), 230-272.
\bibitem{glockner} H. Gl\"ockner, \textit{Direct limit groups do not have small subgroups}, Topology Appl. \textbf{154} (2007), 1126-1133.
\bibitem{Gold} I. Goldbring, \textit{Hilbert's fifth problem for local groups}, Ann. of Math. \textbf{172} (2010), 1269-1314.
\bibitem{He} C.W. Henson, \textit{Foundations of Nonstandard Analysis: A Gentle Introduction to Nonstandard Extensions}; Nonstandard Analysis: Theory and Applications, L. O. Arkeryd, N. J. Cutland, and C. W. Henson, eds., NATO Science Series C:, Springer, 2001.
\bibitem{henson} C.W. Henson, \textit{The nonstandard hulls of a uniform space}, Pacific J. Math. \textbf{43} (1972), 437-446.
\bibitem{Lux}  W. A. J. Luexemburg, \textit{A general theory of monads}, in W. A. J. Luxemburg, ed., Applications of Model Theory, Holt, Rinehart, and Winston, (New York, 1969), 18-86.
\bibitem{M} A.I. Mal'cev, \textit{Sur les groupes topologiques locaux et complets}, Comptes Rendus Acad. Sci. URSS, {\bf 32} (1941), 606-608.
\bibitem{neeb}K. H. Neeb, \textit{Towards a Lie Theory of Locally Convex Groups}, Japan. J. Math. 1 (2006),  291-468.  doi:10.1007/s11537-006-0606-y  
\bibitem{omori} H. Omori, \textit{Infinite Dimensional Lie Transformation Groups}, Lecture Notes Math., \textbf{427}, Springer-Verlag, Berlin-New York, 1974.
\bibitem{pestov}  V. Pestov, \textit{Nonstandard hulls of Banach-Lie groups and algebras}, Nova Journal of Algebra and Geom. \textbf{1} (1992), 371-384.

\end{thebibliography}
\end{document}